\numberwithin{equation}{section}
\newtheorem{prop}{Proposition}[section]
\newtheorem{defi}{Definition}[section]
\newtheorem{lemma}{Lemma}[section]
\newtheorem{theorem}{Theorem}[section]
\newtheorem{conj}{Conjecture}[section]
\newtheorem*{nonnumber}{Theorem} 
\def\ol{\overline}
\begin{document}

\title{On compact splitting complex submanifolds \\ of quotients of bounded symmetric domains}

\author{Ngaiming Mok\footnote{Department of Mathematics, The University of Hong Kong, Hong Kong, People's Republic of China. \textbf{Email:}~nmok@hku.hk}, Sui-Chung Ng\footnote{Department of Mathematics, Shanghai Key Laboratory of PMMP, East China Normal University, Shanghai, People's Republic of China. \textbf{Email:}~scng@math.ecnu.edu.cn}}

\date{}

\maketitle

\begin{abstract}
  In the current article our primary objects of study are compact complex submanifolds $S$ of quotient manifolds $X = \Omega/\Gamma$ of irreducible bounded symmetric domains by torsion free discrete lattices of  automorphisms, and we are interested in the characterization of the totally geodesic submanifolds among compact  splitting complex submanifolds 
$S\subset X$, i.e., under the assumption that the tangent sequence over
$S$ splits holomorphically.  We prove results of two type.  The first type of results concerns $S \subset X$ which are characteristic complex submanifolds, i.e., embedding $\Omega$ as an open subset of its compact dual
manifold $M$ by means of the Borel embedding, the non-zero $(1,0)$-vectors tangent to $S$ lift under a local  inverse of the universal covering map $\pi: \Omega \to X$ to minimal rational tangents of $M$.  We prove that  a compact characteristic complex submanifold $S \subset X$ is necessarily totally geodesic whenever $S$ is a  splitting complex submanifold.  Our proof generalizes the case of the characterization of totally geodesic complex submanifolds of quotients of the complex unit ball $\mathbb B^n$ obtained in~\cite{s16}.  The proof given  here is however new
and it is based on a monotonic property of curvatures of
 Hermitian holomorphic vector subbundles of Hermitian holomorphic vector bundles and on exploiting the splitting  of the tangent sequence to identify the holomorphic tangent bundle $T_S$ as a quotient bundle rather than as a  subbundle of the restriction of the holomorphic tangent bundle
$T_X$ to $S$. The second type of results concerns characterization of total geodesic submanifolds among compact splitting complex submanifolds $S
 \subset X$ deduced from  the results of~\cite{s1} and~\cite{s25} which imply the existence of K\"ahler-Einstein metrics on  $S \subset X$.  We prove that compact splitting complex submanifolds $S \subset X$ of sufficiently large dimension (depending on $\Omega$) are necessarily totally geodesic.  The proof relies on the  Hermitian-Einstein property of holomorphic vector bundles associated to $T_S$, which implies that  endomorphisms of such bundles are parallel, and the construction of endomorphisms
of these vector bundles by means of the splitting of the  tangent sequence on $S$.  We conclude with conjectures on the sharp lower bound on $\dim(S)$ guaranteeing  total geodesy of $S \subset X$ for the case of type-I domains of rank 2 and the case of type-IV domains, and  examine a case which is critical for both conjectures, viz. on compact complex surfaces of quotients of the  4-dimensional Lie ball, equivalently the 4-dimensional type-I domain dual to the Grassmannian of 2-planes  in $\mathbb C^4$.
\end{abstract}

\section{Introduction}

Our primary objects of study in this article are compact complex submanifolds $S$ of quotients $X = \Omega/\Gamma$ of irreducible bounded symmetric domains, and we are interested in the characterization of $S \subset X$ which are totally geodesic. When $S \subset X$ is totally geodesic, the tangent sequence $0 \to T(S) \to T(X)|_S \to N_{S|X} \to 0$ splits holomorphically.  In general, for a complex submanifold $S \subset X$, we say that $S$ is a splitting complex submanifold if the tangent sequence splits holomorphically over $S$, and in the current article we consider the question of characterizing among compact splitting complex submanifolds $S \subset X = \Omega/\Gamma$ those which are totally geodesic. The first result about compact splitting submanifolds concerns the projective space $\mathbb P^n$ (see~\cite{s22}), which is dual to the complex unit ball $\mathbb B^n$, and the result is that any compact splitting submanifold of $\mathbb P^n$ is linear, hence totally geodesic with respect to (any choice of) the Fubini-Study metric. In~\cite{s16}  the first author studied the problem from a differential-geometric perspective, proving the same simultaneously for $\mathbb P^n$, $\mathbb B^n$ and the $n$-dimensional compact complex torus $T = \mathbb C^n/L$ by exploiting the canonical K\"ahler-Einstein metric and the projective connection.

In the current article we are only concerned with the case of Hermitian locally symmetric spaces of the noncompact type, and, generalizing the result in~\cite{s16} for the complex unit ball $\mathbb B^n$ to an irreducible bounded symmetric domain $\Omega$. We prove first of all that a compact splitting complex submanifold $S \subset X := \Omega/\Gamma$ is totally geodesic whenever $S \subset X$ is a characteristic complex submanifold, which means that $S$ is tangent at each point to a local totally geodesic complex submanifold of a special kind, as follows. Embed $\Omega$ as an open subset of its dual Hermitian symmetric manifold $Z$ of the compact type, and identify $Z$ as a projective submanifold by means of the minimal embedding. Then, $Z \subset \mathbb P(\Gamma(Z,{\cal O}(1))^*) =: \mathbb P^N$ is uniruled by projective lines, and, defining the subset $\mathscr C (Z) \subset \mathbb PT(Z)$ to consist at each point $z \in Z$ of projectivizations of tangents of projective lines (minimal rational curves), we have on $Z$ a VMRT (variety of minimal rational tangents) structure $\pi_Z: {\mathscr C} (Z) \to Z$, which restricts to $\Omega$ and descends by Aut$(\Omega)$-invariance to $X = \Omega/\Gamma$, defining thus $\pi_X: {\mathscr C} (X) \to X$, and we say that $S \subset X$ is a characteristic complex submanifold to mean that $\mathbb PT(S) \subset {\mathscr C} (X)$. Thus, $S$ is tangent at each point to a local complex submanifold which lifts to an open subset of a projective linear subspace $\Lambda$ of $Z \subset \mathbb P^N$. In a certain sense, we have a characterization of ``{\it linear\,}'' geodesic submanifolds $S \subset X$ since $\Lambda \subset Z \subset \mathbb P^N$ is linear, and since moreover the intersection $\Lambda \cap \Omega$ is a connected open subset of the affine linear subspace $\Lambda \cap \mathbb C^n$, when we identify $\Omega$ as an open subset of $\mathbb C^n$ by means of the Harish-Chandra embedding. Normalizing the choice of canonical K\"ahler-Einstein metric $g_\Omega$ on $\Omega$ to be such that minimal disks are of Gaussian curvature $-2$, then $\Lambda \cap \Omega \subset \Omega$ is a totally geodesic Hermitian symmetric submanifold of constant holomorphic sectional curvature $-2$. (In particular $(\Lambda \cap \Omega, g_\Omega|_{\Lambda \cap \Omega})$ is holomorphically isometric to the complex hyperbolic space form $(\mathbb B^n,g_{\mathbb B^n})$). Our proof also exploits the canonical K\"ahler-Einstein metric of $\Omega$, but is otherwise elementary relying only on a monotonic property of the curvature of holomorphic vector subbundles of Hermitian holomorphic vector bundles, which results from the Gauss equation.

While the rank 1 case pertains to the projective structure, the higher rank case of $\Omega \subset Z$ underlies geometric structures modeled on reductive Lie groups.  These holomorphic G-structures are captured again by the canonical K\"ahler-Einstein metric, and we make use of them to characterize not necessarily ``{\it linear\,}'' compact totally geodesic complex submanifolds $S \subset X$ under dimension restrictions, noting that there exist examples of non-totally geodesic compact splitting submanifolds $S \subset X$ given by graphs of surjective holomorphic maps which are not covering maps between compact ball quotients. When $\Omega$ is of rank $\ge 2$, in our proof on the one hand we make use of the underlying G-structure on $\Omega$ which leads to non-trivial direct sum decompositions of associated vector bundles of the tangent bundle $T(X)$, especially $S^2T(X)$ and End$_0(T(X))$, on the other hand we make use of the holomorphic splitting of the tangent sequence of the inclusion 
$S \subset X$ to obtain endomorphisms of $S^2(T(S))$ and End$_0(T(S))$. Since the canonical line bundle of a compact complex submanifold $S \subset X$ is ample, there exists on $S$ a K\"ahler-Einstein metric, and hence there exists a Hermitian-Einstein metric $h$ on any associated tensor bundle $V$ of $T(S)$, from which it follows that the kernel of any endomorphism of $V$ over $S$ must be a parallel subbundle.  Making use of this basic principle we prove in \S4 the total geodesy of $S$ under certain dimension restrictions on $S$.

In \S5 we consider two special classes of bounded symmetric domains, viz., the case of rank-2 type-I bounded symmetric domains $D^I_{2,p}$ and the case of type-IV domains $D^{IV}_n$ (Lie balls), $n \ge 3$, showing that in the case of rank-2 type-I domains a compact splitting complex submanifold $S \subset D^I_{2,p}/\Gamma$ must be totally geodesic whenever $\dim(S) > p$ and that in the case of type-IV domains the same holds true for a compact splitting complex submanifold $S \subset D^{IV}_n/\Gamma$ whenever $\dim(S) > \frac {n}{\sqrt{2}}$.  For the former case we conjecture that $\dim(S) \ge p$ suffices, while for the latter case we conjecture that $\dim(S) \ge 2$ suffices. For the rank-2 type-I case there exist $(p-1)$-dimensional splitting and non-totally geodesic compact complex submanifolds $S \subset D^I_{2,p}/\Gamma$ at least in the case where $p = 2, 3, 4$ while in the type-IV case the dual problem for splitting complex submanifolds $S \subset Q^n$ of the hyperquadrics $Q^n$ has been solved by~\cite{s10}, where $S$ was shown to be a linear subspace or a smooth hyperquadric obtained as a linear section of $Q^n$.  In \S6, the last section, we examine the borderline case of $D^I_{2,2} \cong D^{IV}_4$, and we prove that in this case any compact splitting complex submanifold of dimension $\ge 2$ must necessarily either be totally geodesic, or, writing $T(X)|_S = T_S \oplus {\cal N}$ for the holomorphic splitting of the tangent sequence, the complementary bundle ${\cal N}$ must consist of characteristic $2$-planes, an intermediate result leaving open the delicate problem of characterizing splitting compact complex surfaces with characteristic complementary bundles ${\cal N}$ consisting of characteristic 2-planes, i.e., 
$\mathbb P({\cal N}) \subset {\mathscr C} (X)|_S$.

\section{Background materials and statements of results}

In this section we state the main results and collect background materials necessary for the understanding of the statements of these results.  The main results and the necessary background materials break down into those concerning characteristic compact complex submanifolds and those related to the use of K\"ahler-Einstein metrics for compact complex submanifolds of quotients of bounded symmetric domains.

\subsection
{Characterization of compact totally geodesic submanifolds among characteristic complex submanifolds on quotients of irreducible bounded symmetric domains}  
To start with we define ``splitting complex manifolds'' which are the primary objects of study of the current article.

\begin{defi}[{\bf splitting complex submanifold}]
Let $X$ be a complex manifold and $S \subset X$ be a complex submanifold. We say that $S$ is a splitting complex submanifold in $X$ if and only if the tangent sequence $0 \to T(S) \to T(X)|_S \to N_{S|X} \to 0$ splits holomorphically over $S$.
\end{defi}

Concerning compact splitting complex submanifolds the first result was a result of~\cite{s22} characterizing projective linear subspaces of the projective space as precisely the compact splitting complex submanifolds (see \,also~\cite{s18}). Compact splitting complex submanifolds of compact complex tori are also characterized in~\cite{s10}.  In~\cite{s16} the first author of the current article examined compact splitting complex submanifolds in complex hyperbolic space forms (i.e., complex manifolds uniformized by the complex unit ball $\mathbb B^n$), complex Euclidean space forms and the projective space and proved using differential-geometric method the following characterization theorem.

\begin{nonnumber}
{\rm (See [16, Theorem 1]).}
Let $(X,g)$ be a complex hyperbolic space form, a complex Euclidean space form or the complex projective space endowed with the Fubini-Study metric.
Let $S \subset X$ be a compact complex submanifold. Then $S \subset X$ is a splitting complex submanifold if and only if $(S,g|_S) \ \hookrightarrow (X,g)$ is totally geodesic.
\end{nonnumber}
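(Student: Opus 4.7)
The direction ``totally geodesic $\Rightarrow$ splitting'' is standard: for a K\"ahler submanifold $S\subset X$ with vanishing second fundamental form, the Chern connection of $(T_X|_S, g|_S)$ preserves the smooth orthogonal decomposition $T_X|_S = T_S\oplus T_S^{\perp}$, making $T_S^{\perp}$ a holomorphic subbundle and yielding the required splitting.

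For the converse, I would pursue the curvature-monotonicity strategy that the paper develops for its characteristic theorem. Given a holomorphic splitting $T_X|_S = T_S\oplus\mathcal N$, I would equip $T_S$ with two Hermitian metrics induced from $(T_X|_S, g|_S)$: the subbundle metric $h_1$ from $T_S\hookrightarrow T_X|_S$, and the quotient metric $h_2$ from the splitting-induced isomorphism $T_S\cong T_X|_S/\mathcal N$. The Griffiths curvature formulas applied to the two exact sequences $0\to T_S\to T_X|_S\to N_{S|X}\to 0$ and $0\to \mathcal N\to T_X|_S\to T_S\to 0$ then give
\[
c_1(T_S,h_1)=c_1(T_X|_S)|_{T_S}-\tfrac{i}{2\pi}\mathrm{tr}(\sigma^{\ast}\!\wedge\sigma),\quad c_1(T_S,h_2)=c_1(T_X|_S)|_{T_X|_S/\mathcal N}+\tfrac{i}{2\pi}\mathrm{tr}(A\wedge A^{\ast}),
\]
where $\sigma$ and $A$ are the second fundamental forms of $T_S$ and $\mathcal N$ in $T_X|_S$, and both correction terms are pointwise non-negative $(1,1)$-forms vanishing iff the corresponding second fundamental form is zero.

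The constant holomorphic sectional curvature hypothesis, via $R_{i\bar j k\bar l}=c(g_{i\bar j}g_{k\bar l}+g_{i\bar l}g_{k\bar j})$, then yields by direct computation the key pointwise identity
\[
c_1(T_S,h_2)-c_1(T_S,h_1) \;=\; -\tfrac{c}{2\pi}(\pi_{\mathcal N})|_{T_S}\;+\;\tfrac{i}{2\pi}\mathrm{tr}(\sigma^{\ast}\!\wedge\sigma)\;+\;\tfrac{i}{2\pi}\mathrm{tr}(A\wedge A^{\ast}),
\]
where $(\pi_{\mathcal N})|_{T_S}(\xi,\bar\xi)=|\pi_{\mathcal N}\xi|^{2}_{g}\ge 0$ measures the non-orthogonality of the splitting. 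Since $h_1,h_2$ are two metrics on the same holomorphic bundle, both Chern forms represent $c_1(T_S)\in H^{1,1}(S;\mathbb R)$; integrating against $\omega_{g|_S}^{\dim S-1}$ annihilates the left-hand side. In the complex hyperbolic case ($c<0$) and the complex Euclidean case ($c=0$), the right-hand side is pointwise non-negative, forcing every term---and hence $\sigma$---to vanish identically, so $(S,g|_S)\hookrightarrow (X,g)$ is totally geodesic. For the complex projective case ($c>0$), the parallel argument is run on the dual conormal sequence $0\to N^{\ast}_{S|X}\to T^{\ast}_X|_S\to T^{\ast}_S\to 0$, since $T^{\ast}_{\mathbb P^n}$ carries the dual constant-curvature structure with the opposite sign, placing the analogous identity in the favorable sign configuration.

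The principal obstacle is the algebraic comparison of the two trace-restrictions $c_1(T_X|_S)|_{T_S}$ and $c_1(T_X|_S)|_{T_X|_S/\mathcal N}$ when the splitting is not orthogonal, so that $T_S$ and $\mathcal N^{\perp}$ are distinct subbundles of $T_X|_S$. The special form of the ambient curvature tensor under constant holomorphic sectional curvature is exactly what makes that difference collapse to the single ``tilt'' term $-(c/2\pi)(\pi_{\mathcal N})|_{T_S}$, and is what confines this clean form of the argument to the three ambient geometries named in the statement.
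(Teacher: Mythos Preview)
Your argument for the hyperbolic and Euclidean cases is essentially the curvature-monotonicity proof that the paper itself gives for its Theorem~2.1: compare the Ricci form of $(S,g|_S)$ (the ``subbundle'' first Chern form) with the first Chern form of the quotient $T(X)|_S/\mathcal N$, use the constant-curvature formula to control the difference, and integrate against $\omega_g^{s-1}$. Your key identity is correct, and for $c\le 0$ the conclusion follows exactly as you say. Note, however, that this is \emph{not} the proof of the cited theorem in~[16]; that proof shows instead that the $(N^*_{S|X}\otimes T_S)$-valued $(0,1)$-form built from the second fundamental form $\sigma$ is the harmonic representative of the extension class of the tangent sequence, so splitting forces $\sigma\equiv 0$ directly.

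The genuine gap is the projective case. The paper states explicitly, immediately after the proof of Theorem~2.1, that the monotonicity argument ``fails when $(X,g)$ is the complex projective space endowed with the Fubini-Study metric,'' and your proposed dualization does not rescue it. Passing to the conormal sequence $0\to N^*_{S|X}\to T^*_X|_S\to T^*_S\to 0$ and comparing the natural quotient metric on $T^*_S$ with the subbundle metric coming from $T^*_S\cong\mathcal N^{\circ}\subset T^*_X|_S$ produces \emph{the same} identity you already have: under the metric isomorphism $T^*_X\cong T_X$, the two cotangent metrics are exactly $h_1^*$ and $h_2^*$, so the difference of first Chern forms is $-(c_1(T_S,h_1)-c_1(T_S,h_2))$, and the tilt term $-\frac{c}{2\pi}(\pi_{\mathcal N})|_{T_S}$ reappears with the same sign. (Equivalently, computing the ambient trace over $\mathcal N$ versus over $T_S^{\perp}$ again gives $-c\,|P_{\mathcal N}\alpha|^2$.) So for $c>0$ the integrand is still of mixed sign and nothing is forced to vanish. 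For $\mathbb P^n$ one really needs a different mechanism---either the harmonic-representative argument of~[16], or the observation recorded after Theorem~2.1 that $\partial_{\bar\chi}\sigma_{\beta\gamma}\equiv 0\bmod T_x(S)$ in constant holomorphic sectional curvature, which makes $\sigma$ holomorphic and hence $\bar\partial^*$-closed.
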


In [16, Theorem 1] the result was formulated in terms of a holomorphic immersion $f: S \ \hookrightarrow X$.  The proofs for submanifolds $S \subset X$ and for immersions are identical. The proof of 
[16, Theorem 1] essentially relies on showing that from the second fundamental form $\sigma_{S|X}$ one obtains the harmonic representative for the cohomology class representing the obstruction to the holomorphic splitting of the holomorphic tangent sequence $0 \to T(S) \to T(X)|_S \to N_{S|X} \to 0$. The proofs are the same in the cases where $X$ is a complex hyperbolic form, a complex Euclidean form, or the complex projective space equipped with the Fubini-Study metric.

In this article our interest lies on compact splitting complex submanifolds of quotient manifolds of irreducible bounded symmetric domains in general.  As in the case of [16, Theorem 1] we will make use of the duality between bounded symmetric domains and their dual Hermitian symmetric spaces of the compact type. First of all we examine the minimal rational curves on these dual spaces which are uniruled projective manifolds.

Let $Z$ be an irreducible Hermitian symmetric space of the compact type. Denote by ${\cal O}(1)$ the positive generator of Pic$(Z) \cong \mathbb Z$.  Then, the vector space $\Gamma(Z,{\cal O}(1))$ of global holomorphic sections of ${\cal O}(1)$ defines a holomorphic embedding $\iota: Z \hookrightarrow \mathbb P\big(\Gamma(Z,{\cal O}(1))^*\big) := \mathbb P^N$, called the minimal embedding of $Z$. The projective submanifold $Z \subset \mathbb P^N$ is uniruled by projective lines.  For a point $x \in Z$, by the variety of minimal rational tangents (VMRT) ${\mathscr C}_x(Z) \subset \mathbb PT_x(Z)$ we mean the set consisting of all projectivizations $[\alpha]$ of non-zero tangent vectors $\alpha \in T_x(Z)$ such that $T_x(\ell) = \mathbb C\alpha$ for some projective line $\ell \subset Z$ passing through $x$. Varying $x \in Z$ we obtain the VMRT structure $\pi_Z: {\mathscr C} (Z) \to Z$ over $Z$.  The action of Aut$(Z)$ on $Z$ induces an action of Aut$(Z)$ on ${\mathscr C} (Z)$ respecting the projection $\pi_Z: {\mathscr C} (Z) \to Z$.  Let $\Omega \subset Z$ be the Borel embedding.  Then the identity component of Aut$(\Omega)$ is naturally identified with the subgroup of Aut$(Z)$ which preserves $\Omega$.
We write ${\mathscr C} (\Omega) := {\mathscr C} (Z)|_{\Omega}$, define $\pi_0 := \pi|_\Omega$, and call $\pi_0: {\mathscr C} (\Omega) \to \Omega$ the restricted VMRT structure on $\Omega$.  Let now $\Gamma \subset \text{\rm Aut}(\Omega)$ be a torsion-free discrete subgroup, and write $X = \Omega/\Gamma$.  The action of $\Gamma$ on $\Omega$ lifts to an action of $\Gamma$ on $\mathbb PT(\Omega)$, and it preserves the restricted VMRT structure $\pi_0: {\mathscr C} (\Omega) \to \Omega$. Thus, the VMRT structure on $\Omega$ descends to a locally homogeneous holomorphic fiber bundle $\pi_X: {\mathscr C} (X) \to X$ where each fiber ${\mathscr C}_x(X)$ is identified by lifting to $\Omega$ and by applying an automorphism of $\Omega$ to the reference VMRT ${\mathscr C}_0(X) \subset \mathbb PT_0(\Omega) = \mathbb PT_0(Z)$, where $\Omega \Subset \mathbb C^n \subset Z$ are the natural inclusions of $\Omega$ incorporating the Harish-Chandra realization $\Omega \Subset \mathbb C^n$ and the Borel embedding $\Omega \subset Z$, and $0 \in \Omega$ is the origin of $\mathbb C^n$.  We call $\pi_X: {\mathscr C} (X) \to X$ the quotient (restricted) VMRT structure on $X$.

Concerning characteristic complex submanifolds $S$ of $X = \Omega/\Gamma$ we need the following basic definitions.

\begin{defi} [\textbf{characteristic vector, characteristic $s$-plane}]
On $X = \Omega/\Gamma$ denote by $\pi_X: {\mathscr C} (X) \to X$ the quotient VMRT structure on $X$. A tangent vector $\alpha\in T(X)$ such that $[\alpha]\in{\mathscr C} (X)$ is called a characteristic vector.
Let $s > 0$ be an integer and $x$ be a point on $X$.  A member $\Pi$ of the Grassmannian $\text{\rm Gr}(s,T_x(X))$ of $s$-planes in $T_x(X)$ is called a characteristic $s$-plane if and only if $\,\mathbb P\Pi \subset {\mathscr C}_x(X)$.
In general, for any $s>0$ we call $\Pi$ a characteristic vector subspace.
\end{defi}

\begin{defi} [{\bf characteristic complex submanifold}]
A complex submanifold $S \subset X=\Omega/\Gamma$, $s := \dim(S)$, is said to be a characteristic complex submanifold if and only if the holomorphic tangent subspace $T_x(S) \subset T_x(X)$ is a characteristic $s$-plane for any point $x \in S$, i.e., if and only if \, $\mathbb PT_x(S) \subset {\mathscr C}_x(X)$ for any $x \in S$.
\end{defi}

We prove first of all the following characterization theorem for compact characteristic complex submanifolds $S \subset X = \Omega/\Gamma$.

\setcounter{theorem}{0}
\begin{theorem}
Let $\, \Omega$ be an irreducible bounded symmetric domain and $g_\Omega$ be a canonical K\"ahler-Einstein metric on $\Omega$.  Let $\, \Gamma \subset \text{\rm Aut}(\Omega)$ be a torsion-free discrete subgroup, write $X = \Omega/\Gamma$, and let $g$ be the K\"ahler-Einstein metric on $X$ induced from $g_\Omega$.
Let $S \subset X$ be a compact characteristic complex submanifold.  Assume that $S$ is a splitting complex submanifold in $X$.  Then $(S,g|_S) \hookrightarrow (X,g)$ is totally geodesic. Moreover, writing $T(X)|_S = T(S) \oplus {\cal N}$ for a holomorphic splitting of the tangent sequence over $S$, we have necessarily ${\cal N} = T(S)^\perp$.
\end{theorem}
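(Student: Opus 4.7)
The plan is to exploit the splitting $T(X)|_S = T(S)\oplus\mathcal{N}$ to view $T(S)$ simultaneously as a holomorphic subbundle and as a holomorphic quotient of $V := T(X)|_S$, producing two Hermitian metrics on $T(S)$ — the subbundle metric $h = g|_S$, and the quotient metric $\tilde h$ obtained after equipping $\mathcal N$ with the metric induced from $g$. A direct computation gives $\tilde h \le h$ pointwise, with equality iff $\mathcal N = T(S)^\perp$. The goal is to match two Chern form representatives of $c_1(T(S))$ and extract an integral identity which, combined with the characteristic hypothesis, forces total geodesy of $S$ and $\mathcal N = T(S)^\perp$ simultaneously.

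The Gauss equation for the subbundle $T(S)\subset V$ yields $\operatorname{tr}\Theta^h = \operatorname{tr}_{T(S)}\Theta^V - \operatorname{tr}(A^*A)$, where $A$ is the second fundamental form of $S\subset X$. Applying the dual formula to the subbundle $\mathcal N\subset V$ (equivalently, the quotient curvature of $V/\mathcal N \cong T(S)$ with the quotient metric) yields $\operatorname{tr}\Theta^{\tilde h} = \operatorname{tr}_{\mathcal N^\perp}\Theta^V + \operatorname{tr}(BB^*)$, where $B$ is the second fundamental form of $\mathcal N\subset V$; both $\operatorname{tr}(A^*A)$ and $\operatorname{tr}(BB^*)$ are non-negative $(1,1)$-forms. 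Since both sides represent $c_1(T(S))$, their integrals against $\omega^{s-1}$ (with $\omega$ the K\"ahler form of $h$ and $s = \dim S$) agree, giving
\[
\int_S \bigl[\operatorname{tr}_{T(S)}\Theta^V - \operatorname{tr}_{\mathcal N^\perp}\Theta^V\bigr]\wedge\omega^{s-1} \;=\; \int_S \bigl[\operatorname{tr}(A^*A) + \operatorname{tr}(BB^*)\bigr]\wedge\omega^{s-1} \;\geq\; 0.
\]

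The reverse pointwise inequality comes from the characteristic hypothesis. For each $\alpha \in T(S)$ --- characteristic by assumption --- the partial Ricci $\operatorname{tr}_W\Theta^V(\alpha,\bar\alpha) = \sum_i R^X(\alpha,\bar\alpha,w_i,\bar w_i)$ over an $s$-dim subspace $W\subset T_x(X)$ attains its minimum (over $s$-dim subspaces) at $W = T(S)$, since $T(S)$ is a characteristic $s$-plane containing $\alpha$. In particular $\operatorname{tr}_{T(S)}\Theta^V(\alpha,\bar\alpha) \leq \operatorname{tr}_{\mathcal N^\perp}\Theta^V(\alpha,\bar\alpha)$ pointwise on $S$. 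Combined with the integral identity above, both integrands must vanish identically, yielding $A \equiv 0$ (hence $(S,g|_S)\hookrightarrow(X,g)$ totally geodesic), $B\equiv 0$, and the pointwise equality $\operatorname{tr}_{T(S)}\Theta^V = \operatorname{tr}_{\mathcal N^\perp}\Theta^V$, which via the equality case of the minimization forces $\mathcal N^\perp = T(S)$, i.e., $\mathcal N = T(S)^\perp$.

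The main technical obstacle is the extremality of partial Ricci at characteristic directions over characteristic $s$-planes. For $\Omega = \mathbb B^n$ it is a direct computation using $R^X(\alpha,\bar\alpha,\beta,\bar\beta) = -\tfrac12(\|\alpha\|^2\|\beta\|^2 + |g(\alpha,\beta)|^2)$, which yields $\operatorname{tr}_W\Theta^V(\alpha,\bar\alpha) = -\tfrac{s}{2}\|\alpha\|^2 - \tfrac12\|P_W\alpha\|^2$ and makes the extremality transparent. For higher-rank irreducible $\Omega$ it requires analyzing the structure of the curvature tensor of the canonical K\"ahler-Einstein metric at the origin (via the Harish-Chandra decomposition), exhibiting that characteristic vectors realize the most negative bisectional curvature pairings with other tangent directions.
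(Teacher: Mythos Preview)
Your approach is essentially identical to the paper's: view $T(S)$ both as a subbundle of $T(X)|_S$ and, via the splitting, as the quotient $T(X)|_S/\mathcal N$; compare the two first Chern form representatives by integrating against $\omega^{s-1}$; and invoke an extremality property of the partial Ricci trace $\operatorname{Tr}(H_\alpha|_\Pi)$ over $s$-planes $\Pi$ to show the integrand is pointwise of one sign, so that vanishing of the integral forces $\sigma\equiv 0$ and $\mathcal N^\perp = T(S)$. The paper's proof of Theorem~2.1 proceeds exactly along these lines.

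The one genuine gap is the extremality statement you flag at the end as ``the main technical obstacle'': for a unit vector $\chi$, one needs $\operatorname{Tr}(H_\chi|_\Pi) \ge -(s+1)$ for every $s$-plane $\Pi$, with equality for all $\chi$ in an $s$-plane $\Pi'$ forcing $\Pi' = \Pi$ and $\Pi$ characteristic. You verify this for $\mathbb B^n$ but only gesture at the higher-rank case (``analyzing the structure of the curvature tensor \dots\ via the Harish-Chandra decomposition''). The paper proves this (its Lemma~3.1) not via the root-space structure but by passing to the compact dual $(Z,g_c)$ and using the Gauss equation for the minimal isometric embedding $\iota:(Z,g_c)\hookrightarrow(\mathbb P^N,ds^2_{FS})$: writing $W = -R$ for the curvature of $g_c$ and $\tau$ for the second fundamental form of $Z\subset\mathbb P^N$, one has $W_{\chi\bar\chi\xi\bar\xi} = F_{\chi\bar\chi\xi\bar\xi} - \|\tau(\chi,\xi)\|^2 \le F_{\chi\bar\chi\xi\bar\xi}$, and the explicit Fubini--Study curvature $F$ immediately gives the sharp bound $s+1$. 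Since $\chi$ is characteristic iff $\tau(\chi,\chi)=0$ (equivalently $W_{\chi\bar\chi\chi\bar\chi}=2$), both the inequality and its equality characterization fall out at once. This is the missing ingredient; once it is supplied, your outline becomes the paper's proof.
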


\subsection{Characterization of compact totally geodesic submanifolds of quotients of bounded symmetric domains by means of the existence of K\"ahler-Einstein metrics} \quad
Regarding the use of K\"ahler-Einstein metrics, the starting point is the well-known existence theorem of Aubin and Yau.


\begin{nonnumber} {\rm (See \cite{s1}, \cite{s25}).}
Let $M$ be a compact K\"ahler manifold with ample canonical line bundle.  Then, there exists on $M$ a K\"ahler-Einstein metric of negative Ricci curvature. Moreover, such metrics are unique up to scaling factors.
\end{nonnumber}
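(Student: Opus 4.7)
The plan is to reformulate the existence question as a scalar complex Monge-Ampère equation. Since $K_M$ is ample, Kodaira's embedding theorem produces a K\"ahler metric $\omega_0$ whose cohomology class equals $c_1(K_M) = -c_1(M)$. A K\"ahler metric $\omega = \omega_0 + i\partial\bar\partial\varphi$ in the same class satisfies $\text{Ric}(\omega) = -\omega$ if and only if
\begin{equation*}
(\omega_0 + i\partial\bar\partial\varphi)^n = e^{F+\varphi}\,\omega_0^n, \qquad \omega_0 + i\partial\bar\partial\varphi > 0,
\end{equation*}
where $F \in C^\infty(M)$ is determined by $\text{Ric}(\omega_0) + \omega_0 = i\partial\bar\partial F$ (the right-hand side is $d$-exact since it lies in the trivial class $c_1(M) + c_1(K_M) = 0$), a normalization constant being absorbed into $F$. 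The goal becomes to produce a smooth solution $\varphi$; crucially the exponent carries $+\varphi$ rather than $-\varphi$, and this favorable sign drives all the analysis.

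To solve the equation I would apply the continuity method. For $t \in [0,1]$ consider the family
\begin{equation*}
(\omega_0 + i\partial\bar\partial\varphi_t)^n = e^{tF+\varphi_t}\,\omega_0^n,
\end{equation*}
which has the trivial solution $\varphi_0 \equiv 0$ at $t=0$. Let $T \subset [0,1]$ denote the set of parameters admitting a smooth solution. Openness of $T$ follows from the implicit function theorem on suitable H\"older spaces: the linearization at a solution is $u \mapsto \Delta_{\omega_t} u - u$, which is invertible since its spectrum lies in $(-\infty, -1]$. Thus the entire burden of the existence proof lies in establishing that $T$ is closed, equivalently in obtaining uniform a priori estimates on $\varphi_t$ to all orders.

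The a priori estimates proceed in three layers. A $C^0$ bound is immediate from the maximum principle applied to the equation itself: at an interior maximum $p$ of $\varphi_t$ one has $i\partial\bar\partial\varphi_t(p) \le 0$, so $e^{tF+\varphi_t}(p) \le 1$ and hence $\max \varphi_t \le -t \min F$; a symmetric argument at a minimum yields the lower bound. A $C^2$ bound is obtained by computing $\Delta_{\omega_t}\bigl(\log \operatorname{tr}_{\omega_0} \omega_t - A \varphi_t\bigr)$ for a sufficiently large constant $A$ depending on a lower bound for the bisectional curvature of $\omega_0$, and invoking the maximum principle; the favorable sign of the zero-order term in the equation is what permits the absorption of bad terms into $A \operatorname{tr}_{\omega_t}\omega_0$. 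Once $C^0$ and $C^2$ bounds are in hand, Calabi's third-order estimate (or alternatively the Evans-Krylov theorem) yields a uniform $C^{2,\alpha}$ bound, and standard Schauder theory bootstraps to uniform $C^k$ bounds for all $k$. Thus $T = [0,1]$, delivering a smooth K\"ahler-Einstein metric at $t=1$.

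For uniqueness, suppose $\omega_1$ and $\omega_2$ are K\"ahler-Einstein metrics both normalized so that $\text{Ric}(\omega_i) = -\omega_i$. Then $[\omega_1] = [\omega_2] = c_1(K_M)$, so $\omega_2 = \omega_1 + i\partial\bar\partial\psi$ for some $\psi \in C^\infty(M)$, and $\psi$ satisfies $(\omega_1 + i\partial\bar\partial\psi)^n = e^\psi\,\omega_1^n$. At a maximum of $\psi$ one has $e^\psi \le 1$, so $\max\psi \le 0$, while at a minimum $e^\psi \ge 1$, so $\min\psi \ge 0$; hence $\psi \equiv 0$ and $\omega_1 = \omega_2$. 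The phrase \emph{unique up to scaling factors} then reflects the residual freedom to rescale the Einstein constant from $-1$ to any other negative value, inducing a reciprocal rescaling of $\omega$. I expect the main obstacle to be the $C^2$ estimate: the exact algebra of $\Delta_{\omega_t} \log \operatorname{tr}_{\omega_0}\omega_t$ must be balanced against the zero-order term via a judicious choice of the auxiliary constant $A$, and this is where the negativity of the Einstein constant enters decisively; the $C^0$ step, though essential, is by comparison a direct application of the maximum principle.
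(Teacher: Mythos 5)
The paper does not prove this statement; it is quoted as the classical Aubin--Yau existence theorem with references to \cite{s1} and \cite{s25}, and your continuity-method argument (reduction to the Monge--Amp\`ere equation $(\omega_0+i\partial\bar\partial\varphi)^n=e^{F+\varphi}\omega_0^n$, trivial $C^0$ bound from the favourable sign, Yau--Aubin second-order estimate, Evans--Krylov/Calabi plus Schauder bootstrapping, and maximum-principle uniqueness) is precisely the standard proof contained in those references. Your sketch is correct, including the observation that the ``up to scaling'' clause corresponds to the choice of negative Einstein constant.
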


By a canonical K\"ahler-Einstein metric on $M$ we will mean any of the K\"ahler-Einstein metrics on $M$ of Ricci curvature $-c$, where $c > 0$. The following existence result for compact complex submanifolds of quotients of bounded symmetric domains is well-known, but we include here a proof for easy reference.

\begin{prop}
Let $\Omega$ be a bounded symmetric domain and $\, \Gamma \subset \text{\rm Aut}(\Omega)$ be a torsion-free discrete subgroup, and write $X = \Omega/\Gamma$. Let $S \subset X$ be a compact complex submanifold.  Then, for any $c > 0$ there exists a unique K\"ahler-Einstein metric on $S$ of Ricci curvature $-c$.
\end{prop}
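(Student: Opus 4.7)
The plan is to reduce, via the Aubin--Yau theorem recalled just above, to showing that the canonical bundle $K_S$ is ample on the compact K\"ahler manifold $S$; Aubin--Yau then yields the unique K\"ahler--Einstein metric of Ricci curvature $-c$ on $S$ for any $c>0$.

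First I would descend the canonical K\"ahler--Einstein metric: normalizing $g_\Omega$ on $\Omega$ so that $\text{\rm Ric}(g_\Omega)=-c\,\omega_{g_\Omega}$, it descends to a K\"ahler--Einstein metric $g_X$ on $X=\Omega/\Gamma$, and the Hermitian metric on $K_X$ induced by $g_X$ has Chern curvature $c\,\omega_X$, a positive $(1,1)$-form. Restricting to the compact complex submanifold $S\subset X$, the induced Hermitian metric on $K_X|_S$ has Chern curvature $c\,\omega_S$, where $\omega_S:=\omega_X|_S$ is a K\"ahler form on $S$ (because $S$ is a complex submanifold of the K\"ahler manifold $X$), so by Kodaira's embedding theorem $K_X|_S$ is an ample line bundle on $S$.

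Next I would use the Gauss equation to analyze the Hermitian metric on $K_S$ induced by $g_X|_S$. For $v\in T_xS$ with $\{e_i\}$ a unitary frame of $T_xS$ and $\sigma$ the second fundamental form of $S\hookrightarrow X$,
\[
-\text{\rm Ric}(g_X|_S)(v,\bar v)=\sum_i\bigl(-R^X(e_i,\bar e_i,v,\bar v)\bigr)+\sum_{i,a}|\sigma^a_{iv}|^2\ \ge\ 0,
\]
where each summand is non-negative by the non-positivity of the holomorphic bisectional curvature of the bounded symmetric domain $\Omega$ and by $|\sigma|^2\geq 0$; equivalently, $T^*_\Omega$ is Griffiths semi-positive as the dual of the Griffiths semi-negative tangent bundle, whence the quotient bundle $T^*_S$ of $T^*_X|_S$ is Griffiths semi-positive and therefore $K_S=\det T^*_S$ admits a semi-positively curved Hermitian metric. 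Hence $K_S$ is nef. Combining with the ampleness of $K_X|_S$ through the adjunction formula $K_S\cong K_X|_S\otimes\det N_{S|X}$, the tensor product of the strictly positively curved metric on $K_X|_S$ from the previous paragraph with a semi-positively curved metric on $\det N_{S|X}$ yields a strictly positively curved Hermitian metric on $K_S$, establishing ampleness.

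The main obstacle is the passage from semi-positivity to strict positivity: for $\Omega$ of rank $1$ (the unit ball) it is immediate, since the holomorphic bisectional curvature is strictly negative and the right-hand side of the Gauss equation above is then strictly positive for $v\ne 0$; for higher-rank bounded symmetric domains the subtlety is to exclude tangent directions $v\in T_xS$ that are simultaneously bisectionally null with all of $T_xS$ and in the kernel of $\sigma$, which I expect to be handled by the combined use of the positively curved $K_X|_S$ and the semi-positively curved metric on $\det N_{S|X}$ through the adjunction formula. Once $K_S$ is ample, the Aubin--Yau theorem immediately furnishes the unique K\"ahler--Einstein metric of Ricci curvature $-c$.
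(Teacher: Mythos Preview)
Your approach and the paper's both hinge on the Gauss equation, but you miss the one-line observation that closes the argument and instead take a detour through adjunction that contains an unjustified step.

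The gap is your claim that $\det N_{S|X}$ carries a semi-positively curved metric. You never prove this, and the natural metrics do not give it: $N_{S|X}$ is a \emph{quotient} of the Griffiths semi-negative bundle $T_X|_S$, so its curvature goes \emph{up}, not down, and there is no reason for $\det N_{S|X}$ to be semi-positive. The adjunction route therefore does not close as written.

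The point you overlook is that the obstacle you flag for higher rank cannot occur. In your own Gauss identity
\[
-\text{Ric}(g_X|_S)(v,\bar v)=\sum_i\bigl(-R^X_{e_i\bar e_i v\bar v}\bigr)+\sum_{i,a}|\sigma^a_{iv}|^2,
\]
take one of the unit vectors $e_i$ to be $v/\|v\|$. That single summand is $-R^X_{v\bar v v\bar v}/\|v\|^2>0$, because every bounded symmetric domain (irreducible or not, any rank) has \emph{strictly negative holomorphic sectional curvature}; only the bisectional curvature can vanish. So $-\text{Ric}(g_X|_S)$ is strictly positive, $K_S$ is ample, and Aubin--Yau applies. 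This is exactly the paper's proof: it sets $\xi=\eta$ in the Gauss equation, observes $R^S_{\xi\bar\xi\xi\bar\xi}=R_{\xi\bar\xi\xi\bar\xi}-\|\sigma(\xi,\xi)\|^2<0$, and concludes negative Ricci directly. Your worry about a direction $v\in T_xS$ bisectionally null with all of $T_xS$ is vacuous, since it would in particular force $R^X_{v\bar v v\bar v}=0$.
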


\begin{proof}
If $\Omega$ is irreducible, $g_\Omega$ agrees up to a scaling factor with the invariant K\"ahler metric defined by the Killing form, so that $(\Omega,g_\Omega)$ is of nonpositive holomorphic bisectional curvature and of strictly negative holomorphic sectional curvature, and the same holds when $\Omega$ is reducible by de Rham decomposition. By invariance $g_\Omega$ descends to a K\"ahler-Einstein metric $g$ on $X = \Omega/\Gamma$. We denote by $R$ the curvature tensor of $(X,g)$ and by $R^S$ the curvature tensor of $(S,g|_S)$. Let $x \in S$ and $\xi, \eta \in T_x(S)$.  By the Gauss equation, we have
$R^S_{\xi\ol{\xi}\eta\ol{\eta}} = R_{\xi\ol{\xi}\eta\ol{\eta}} - \|\sigma(\xi,\eta)\|^2 \le 0$,
where $\sigma = \sigma_{S|X}$ denotes the second fundamental form of $(S,g|_S) \hookrightarrow (X,g)$. If $\xi = \eta \neq 0$, then $R^S_{\xi\ol{\xi}\xi\ol{\xi}} = R_{\xi\ol{\xi}\xi\ol{\xi}} - \|\sigma(\xi,\xi)\|^2 < 0$, and it follows that $(S,g|_S)$ is of (strictly) negative Ricci curvature.
By the theorem of Aubin-Yau, for any $c > 0$ there exists a unique K\"ahler-Einstein metric of Ricci curvature $-c$ on $S$. 
\end{proof}

\noindent\textbf{Remark.}
Here and henceforth $\|\cdot\|$ denotes the norm of a vector measured against a Hermitian metric which is implicitly understood in the given context.

\vskip 0.2cm
On a complex manifold $M$ we denote by $T^r_s(M):=T(M)^{\otimes r}\otimes T^*(M)^{\otimes s}$ the holomorphic bundle of $(r,s)$-tensors, i.e., tensors which are contravariant of degree $r$ and covariant of degree $s$.
Let $(\Omega,g_\Omega)$ be an irreducible bounded symmetric domain equipped with a K\"ahler-Einstein metric $g_\Omega$, and let $X=\Omega/\Gamma$, where $\Gamma\subset\text{\rm Aut}(\Omega)$ is a torsion-free discrete subgroup. Denote by $R_{\xi\bar\eta\mu\bar\nu}$ the curvature tensor of $X$.  By contracting with the metric, we obtain from $R_{\xi\bar\eta\mu\bar\nu}$ a $(2,2)$-tensor $\displaystyle R^{\alpha\beta}_{\gamma\delta}:=\sum_{\eta,\nu}R_{\gamma\bar\eta\delta\bar\nu}g^{\alpha\bar\eta}g^{\beta\bar\nu}$. As $X$ is Hermitian locally symmetric, $R^{\alpha\beta}_{\gamma\delta}$ is parallel and hence holomorphic, i.e. $R^{\alpha\beta}_{\gamma\delta}\in H^0(X,T^2_2(X)$). In particular, we can regard $R^{\alpha\beta}_{\gamma\delta}$ as a holomorphic endomorphism on $T^1_1(X)$. In addition, due to the symmetries of the curvature tensor, we have $R^{\alpha\beta}_{\gamma\delta}=R^{\beta\alpha}_{\gamma\delta}=R^{\alpha\beta}_{\delta\gamma}$ and hence $R^{\alpha\beta}_{\gamma\delta}$ can also be regarded as a holomorphic endomorphism on $S^2T(X)$. To avoid confusion, we denote the two endomorphisms respectively by $R_\tau:T^1_1(X)\rightarrow T^1_1(X)$ and $R_\sigma:S^2T(X)\rightarrow S^2T(X)$.

Since $R_\sigma$ is parallel, $S^2T(X)$ has a parallel (hence holomorphic) direct-sum decomposition in which the direct summands are the eigenbundles of $R_\sigma$. For the purpose of obtaining vanishing theorems for cohomology groups of $X=\Omega/\Gamma$, Calabi-Vesentini and Borel computed this decomposition for all irreducible bounded symmetric domains $\Omega$ (Calabi-Vesentini for the classical types and Borel for the exceptional types). They showed that when $\text{\rm rank}(\Omega)\geq 2$, the endomorphism $R_\sigma$ always has exactly two eigenvalues and hence there is a two-factor parallel direct-sum decomposition $S^2T(X)=A\oplus B$.

\vskip 0.2cm
Finally, we recall the following notion.

\begin{defi} [{\bf degree of the strong non-degeneracy of the bisectional curvature}] 
Let $X = \Omega/\Gamma$, where $\Omega$ is an irreducible bounded symmetric domain and $\Gamma\subset\text{\rm Aut}(\Omega)$ is a torsion-free discrete subgroup. Let $p\in X$ and ${\cal Z}_p=\{(A,B): A\subset T_p(X), B\subset T_p(X)$ are linear subspaces such that $R_{a\bar a b\bar b}=0$ for all $(a,b)\in A\times B\}$. Then $\displaystyle\max_{(A,B)\in{\cal Z}_p}\{\dim A+\dim B\}$, which is independent of $p$, is called the degree of the strong non-degeneracy of the bisectional curvature of $X$~\cite{s20}.
\end{defi}

Exploiting the existence of K\"ahler-Einstein metrics on compact complex submanifolds of quotient manifolds $X$ of irreducible bounded symmetric domains $\Omega$ as given in Proposition 2.1, we show in the current article that compact splitting complex submanifolds of $X$ of sufficiently small codimension (in some specific sense depending on $\Omega$) are necessarily totally geodesic, as given in the ensuing Theorem 2.2 and Theorem 2.3.

\begin{theorem}
Let $\Omega$ be an irreducible bounded symmetric domain and $\text{\rm rank}(\Omega)\geq 2$. Let $\Gamma\subset\text{\rm Aut}(\Omega)$ be a torsion-free discrete subgroup and write $X = \Omega/\Gamma$. Write the eigenbundle decomposition $S^2T(X)=A\oplus B$ for $R_\sigma$, where $R_\sigma$ is the endomorphism on $S^2T(X)$ induced by the curvature tensor. Let $S\subset X$ be a compact splitting complex submanifold such that $\text{\rm rank}(S^2T(S))>\max\{\text{\rm rank}(A),\text{\rm rank}(B)\}$ and $\text{\rm dim}(S)>\rho$, where $\rho$ is the degree of the strong non-degeneracy of the bisectional curvature of $X$. Then, $S$ is Hermitian locally symmetric of rank at least 2 and totally geodesic with respect to the canonical K\"ahler-Einstein metrics on $X$.
\end{theorem}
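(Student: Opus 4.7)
The plan is to leverage the holomorphic splitting $T(X)|_S = T(S) \oplus \mathcal{N}$ and the parallel eigenbundle decomposition $S^2T(X) = A \oplus B$ to manufacture nontrivial holomorphic endomorphisms of $S^2T(S)$, then to invoke the Hermitian--Einstein principle announced in the introduction --- the kernel of any holomorphic endomorphism of a tensor bundle of $T(S)$ is a parallel subbundle with respect to the K\"ahler--Einstein metric $g_S$ on $S$ furnished by Proposition~2.1. Concretely, let $i\colon T(S)\hookrightarrow T(X)|_S$ and $p\colon T(X)|_S\twoheadrightarrow T(S)$ be the holomorphic inclusion and projection from the splitting, with $p\circ i=\mathrm{id}$, and let $i_2, p_2$ be the induced maps on $S^2$-bundles, satisfying $p_2\circ i_2=\mathrm{id}$. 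Denoting by $P_A, P_B$ the (parallel, hence holomorphic) projections from $S^2T(X)=A\oplus B$, I define
\[
\Phi_A := p_2\circ P_A|_S\circ i_2,\qquad \Phi_B := p_2\circ P_B|_S\circ i_2,
\]
holomorphic endomorphisms of $S^2T(S)$ with $\Phi_A+\Phi_B=\mathrm{id}$. The same recipe applied to the eigenbundle decomposition of $\mathrm{End}_0(T(X))$ under $R_\tau$ yields companion endomorphisms of $\mathrm{End}_0(T(S))$.

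Since $\Phi_A$ factors through $A|_S$, one has $\mathrm{rank}\,\Phi_A \leq \mathrm{rank}\,A < \mathrm{rank}\,S^2T(S)$ by hypothesis, so $\ker\Phi_A$ is a nontrivial holomorphic subbundle; by the Hermitian--Einstein principle it is parallel with respect to the Chern connection of $g_S$, and likewise $\ker\Phi_B$. Thus $S^2T(S)$ admits a nontrivial parallel decomposition, as does $\mathrm{End}_0(T(S))$, forcing a strict reduction of the holonomy of $(S,g_S)$ below the full unitary group. Invoking de Rham decomposition together with a Berger-type classification for K\"ahler manifolds with strictly negative Ricci curvature, and using the dimension bound $\dim S > \rho$ to exclude the rank-one (complex hyperbolic) case --- whose strictly negative bisectional curvature is incompatible with the null locus of bisectional curvature of size $\rho$ inherited from the ambient $X$ --- I would conclude that $(S,g_S)$ is Hermitian locally symmetric of rank $\geq 2$.

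To deduce total geodesy, I would compare the intrinsic curvature endomorphism $\tilde R_\sigma$ of $(S,g_S)$ on $S^2T(S)$ with the extrinsic endomorphism $p_2\circ R_\sigma|_S\circ i_2$, combined with the Gauss equation $R^S_{\xi\bar\xi\eta\bar\eta}=R_{\xi\bar\xi\eta\bar\eta}-\|\sigma(\xi,\eta)\|^2$ from the proof of Proposition~2.1. The parallel two-eigenvalue structure of $R_\sigma$ inherited from $A\oplus B$ must be matched by $\tilde R_\sigma$ on $S$, and the $\|\sigma\|^2$ correction in Gauss introduces additional eigenvalues unless $\sigma\equiv 0$, which gives total geodesy.

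The main obstacle I anticipate is the reconciliation of the two K\"ahler--Einstein metrics on $S$ --- the Aubin--Yau metric $g_S$ (with respect to which the parallelism operates) and the restricted metric $g|_S$ (in which the Gauss equation and total geodesy are formulated). Translating parallelism under $g_S$ into the vanishing of the second fundamental form of $g|_S$ is the technical heart of the argument, and I expect the bound $\dim S > \rho$ to play its essential role precisely there.
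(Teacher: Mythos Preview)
Your construction of $\Phi_A,\Phi_B$ and the use of the rank hypothesis to force nontrivial parallel kernels in $S^2T(S)$ is exactly what the paper does (their $E_A,E_B$; parallelism comes from Bochner's theorem for holomorphic $(r,r)$-tensors on a compact K\"ahler--Einstein manifold). The companion construction on $\mathrm{End}_0(T(S))$ is not needed here; it belongs to Theorem~2.3, where only the weaker $\min$ bound on $\mathrm{rank}(A),\mathrm{rank}(B)$ is assumed.

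There are two genuine gaps.

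\textbf{The role of $\dim S>\rho$.} You have it backwards. It is \emph{not} used to exclude the rank-one (complex hyperbolic) case; that case --- and all the Berger-list holonomies $U(n),SU(n),Sp(n/2),Sp(n/2)\times U(1)$ --- is excluded because for each of them the representation on $S^2\mathbb C^n$ is irreducible, contradicting the existence of your proper parallel subbundle. The bound $\dim S>\rho$ is used \emph{before} Berger, to prove that $(S,g_S)$ is de Rham irreducible (Lemma~4.1 of the paper): if $T(S)=E\oplus F$ were a nontrivial parallel splitting, a Chern-class integral argument comparing $c_1(F,h)$ with $c_1(F,g|_S)$ produces an $(e+f)$-dimensional null pair for the ambient bisectional curvature, forcing $\dim S=e+f\le\rho$. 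Without irreducibility you cannot invoke Berger at all, since a holonomy-invariant subspace of $S^2T(S)$ might merely reflect a product structure. Your proposed exclusion of the ball case (``strictly negative bisectional curvature incompatible with the null locus inherited from $X$'') fails because the Gauss equation makes curvatures of $g|_S$ only more negative, and says nothing about $g_S$.

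\textbf{Total geodesy.} Your proposed route --- comparing the intrinsic $\tilde R_\sigma$ for $g_S$ with $p_2\circ R_\sigma\circ i_2$ via Gauss and arguing that $\|\sigma\|^2$ perturbs the eigenvalues --- does not work, for precisely the obstacle you flag: Gauss relates the ambient curvature to that of $g|_S$, not $g_S$, and there is no mechanism to transfer conclusions across the two metrics. The paper does not attempt any such comparison. Once $(S,g_S)$ is known to be irreducible Hermitian locally symmetric of rank $\ge 2$ (hence of noncompact type, and of finite volume since $S$ is compact), it simply invokes Mok's metric rigidity theorem (stated as Theorem~4.7 in the paper): any nonconstant holomorphic map from such a space into a Hermitian locally symmetric manifold of noncompact type is a totally geodesic isometric immersion up to a normalizing constant. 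Applied to the inclusion $S\hookrightarrow X$, this gives total geodesy in one stroke.
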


\begin{theorem}
Let $\Omega$ be an irreducible bounded symmetric domain with $\text{\rm rank}(\Omega)\geq 2$, $K\subset\text{\rm Aut}(\Omega)$ be the isotropy group at $0\in\Omega$ and $\frak k$ be its Lie algebra. Let $\Gamma\subset\text{\rm Aut}(\Omega)$ be a torsion-free discrete subgroup and $X=\Omega/\Gamma$. Write the eigenbundle decomposition $S^2T(X)=A\oplus B$ for $R_\sigma$, where $R_\sigma$ is the endomorphism on $S^2T(X)$ induced by the curvature tensor. Let $S\subset X$ be a compact splitting complex submanifold such that $\text{\rm rank}(S^2T(S))>\min\{\text{\rm rank}(A),\text{\rm rank}(B)\}$ and $\text{\rm dim}(S)^2 > \text{\rm max}\{\text{\rm dim}_{\mathbb R}(\frak k),\rho^2\}$, where $\rho$ is the degree of the strong non-degeneracy of the bisectional curvature of $X$. Then, $S$ is Hermitian locally symmetric of rank at least 2 and totally geodesic with respect to the canonical K\"ahler-Einstein metrics on $X$.
\end{theorem}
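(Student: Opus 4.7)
The proof of Theorem 2.3 should follow the same template as that of Theorem 2.2, but will need a second, independent construction of a parallel endomorphism on $S$ in order to cover the case ruled out by the stronger rank hypothesis of Theorem 2.2. The backbone is Proposition 2.1: because the canonical bundle of $S$ is ample, $S$ carries a K\"ahler-Einstein metric of negative Ricci curvature, so that every associated tensor bundle of $T(S)$ is Hermitian-Einstein. Consequently the kernel of any holomorphic endomorphism of such a tensor bundle is a parallel subbundle, and the bundle decomposes as a direct sum of Hermitian-Einstein sub-bundles.

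First I would use the holomorphic splitting $T(X)|_S = T(S)\oplus{\cal N}$ to manufacture projections $\pi^{(2)}\colon S^2T(X)|_S\to S^2T(S)$ and $\pi^{(1,1)}\colon T^1_1(X)|_S\to T^1_1(S)$. Composing the inclusion $\iota\colon S^2T(S)\hookrightarrow S^2T(X)|_S$ with the parallel projections $p_A,p_B$ onto the eigenbundles of $R_\sigma$ and then with $\pi^{(2)}$ produces holomorphic endomorphisms $\Phi_A,\Phi_B$ of $S^2T(S)$ whose kernels are parallel. The hypothesis $\text{rank}(S^2T(S))>\min\{\text{rank}(A),\text{rank}(B)\}$ guarantees that at least one of the two intersections $\iota(S^2T(S))\cap A|_S$, $\iota(S^2T(S))\cap B|_S$ is a proper subbundle of $\iota(S^2T(S))$. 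When both are proper, i.e.\ $\iota(S^2T(S))$ meets both summands non-trivially, the Theorem 2.2 argument applies verbatim and, in conjunction with $\dim(S)>\rho$, forces $\sigma_{S|X}\equiv 0$.

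The main obstacle — and the essential new case handled in Theorem 2.3 — is when $\iota(S^2T(S))$ lies entirely inside the larger of the two summands, say $B|_S$. Here the $S^2$-argument collapses, and I would switch to the parallel endomorphism $R_\tau$ on $\text{End}(T(X))=T^1_1(X)$. The eigenbundle decomposition of $R_\tau$ contains, as one of its parallel summands, the holomorphic Lie-algebra subbundle ${\cal K}\subset\text{End}_0(T(X))$ with fibre $\mathfrak k^{\mathbb C}$ at each point, of complex rank $\dim_{\mathbb R}(\mathfrak k)$. Applying the compose-with-splitting construction with $\text{End}_0$ in place of $S^2$ produces a holomorphic endomorphism $\Psi\colon\text{End}_0(T(S))\to\text{End}_0(T(S))$ whose parallel kernel can be analysed via the second fundamental form. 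If $\sigma_{S|X}\not\equiv 0$, then a comparison of second fundamental forms on the two sides shows that the image of the natural embedding $\text{End}_0(T(S))\hookrightarrow\text{End}_0(T(X))|_S$ must be forced into ${\cal K}|_S$, yielding $\dim(S)^2-1\le \dim_{\mathbb R}(\mathfrak k)$, contradicting the hypothesis $\dim(S)^2>\dim_{\mathbb R}(\mathfrak k)$. The remaining hypothesis $\dim(S)^2>\rho^2$ plays the same role as $\dim(S)>\rho$ did in Theorem 2.2, namely ruling out configurations in which $\sigma_{S|X}$ is supported on strongly degenerate bisectional pairs.

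Once $\sigma_{S|X}\equiv 0$ has been established, $S$ is Hermitian locally symmetric as a totally geodesic submanifold of $X$. To obtain rank at least $2$ I would argue by contradiction: if $S$ were of rank $1$, then $S^2T(S)$ would be irreducible as a bundle with parallel structure, hence would embed into exactly one of $A|_S,B|_S$; the hypothesis $\text{rank}(S^2T(S))>\min\{\text{rank}(A),\text{rank}(B)\}$ would force this into the larger summand, and the same Lie-algebra argument as above, still under $\dim(S)^2>\dim_{\mathbb R}(\mathfrak k)$, gives the required contradiction.
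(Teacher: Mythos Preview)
You have identified the right pair of decompositions --- the Calabi--Vesentini splitting $S^2T(X)=A\oplus B$ and the Borel splitting $T^1_1(X)=C\oplus D$ with $\text{rank}(D)=\dim_{\mathbb R}\mathfrak k$ --- but the mechanism you propose for the ``new'' case is not the one that works, and your logical flow is inverted relative to the paper.

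First, the dichotomy is not whether $\sigma(S^2T(S))$ sits inside one summand; it is whether the composed endomorphism $E_A=\pi_\sigma\circ p_A\circ\sigma$ of $S^2T(S)$ is identically zero. When $E_A\not\equiv 0$, its kernel (nontrivial by the rank hypothesis) is a proper parallel subspace of $S^2T(S)$ with respect to the K\"ahler--Einstein metric on $S$; since $\dim(S)>\rho$ forces $S$ to be de Rham irreducible, Berger's holonomy theorem then shows $S$ is Hermitian locally symmetric of rank $\ge 2$, and total geodesy follows from metric rigidity. So the order is \emph{locally symmetric of rank $\ge 2$ first, then totally geodesic}, not the reverse; there is no direct attack on $\sigma_{S|X}$.

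The genuine gap is your treatment of the case $E_A\equiv 0$. There is no ``comparison of second fundamental forms'' argument that forces $\text{End}_0(T(S))$ into the $\mathfrak k^{\mathbb C}$-subbundle; nothing in the setup produces such a containment. The actual argument is purely algebraic and hinges on a lemma you are missing: if $E_A\equiv 0$ then $E_B=\text{id}$, so $R^S_\sigma:=\pi_\sigma\circ R_\sigma\circ\sigma=\lambda_B\,\text{id}_{S^2T(S)}$ with $\lambda_B\neq 0$. The point is that $R^S_\sigma$ and $R^S_\tau:=\pi_\tau\circ R_\tau\circ\tau$ are given by the \emph{same} $(2,2)$-tensor on $S$, viewed respectively as an endomorphism of $S^2T(S)$ and of $T^1_1(S)$. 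A short computation (essentially the curvature tensor of the complex hyperbolic space) shows that if a symmetric $(2,2)$-tensor acts as a nonzero scalar on $S^2T_x(S)$, then as an endomorphism of $T^1_1_{,x}(S)$ it is \emph{injective}. On the other hand, $\dim(S)^2>\dim_{\mathbb R}\mathfrak k=\text{rank}(D)$ forces $\tau(T^1_1(S))$ to meet $C|_S=\ker R_\tau$, so $R^S_\tau$ has nontrivial kernel. This is a contradiction, so $E_A\equiv 0$ cannot occur. Your proposed second-fundamental-form comparison does not supply this link between the two views of the same tensor, and without it the argument does not close.
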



\section{Compact characteristic submanifolds on quotients of irreducible bounded symmetric domains}

Any bounded symmetric domain $\Omega$ admits an invariant complete K\"ahler-Einstein metric of negative Ricci curvature.  By the normalized canonical K\"ahler-Einstein metric $g_\Omega$ on $\Omega$ we will mean that $g_\Omega$ is chosen such that minimal disks on $\Omega$ are of Gaussian curvature $-2$, noting that all minimal disks are equivalent to each other under Aut$(\Omega)$. For any torsion-free discrete subgroup $\Gamma \subset \text{\rm Aut}(\Omega)$ and for $X := \Omega/\Gamma$, the K\"ahler-Einstein metric $g_\Omega$ on $\Omega$ descends by $\Gamma$-invariance to a canonical K\"ahler-Einstein metric $g$ on $X$.

For curvature estimates on bounded symmetric domains we will make use of dual pairs of Hermitian symmetric spaces. Let $\Omega \Subset \mathbb C^n \subset Z$ be the standard embeddings incorporating the Harish-Chandra realization $\Omega \Subset \mathbb C^n$ and the Borel embedding $\Omega \subset Z$.  
Write $G = \text{\rm Aut}_0(\Omega)$, the identity component of the group of biholomorphic automorphisms of $\Omega$, so that also $G = \text{\rm Aut}_0(\Omega,g_\Omega)$. Write $\frak g_\Omega = \frak m \oplus \frak k$ for the Cartan decomposition with respect to the Cartan involution of $(\Omega,g_\Omega)$ at $0 \in \Omega \Subset \mathbb C^n$, where $\frak g_\Omega$ resp.\,$\frak k$ stands for the Lie algebras of $G$ resp.\,$K$, and $\frak m$ is canonically identified with the real tangent space at $0$.  Let $K = \text{\rm Aut}_0(\Omega;0)\subset G$ be the isotropy subgroup at $0$, so that also $K = \text{\rm Aut}_0(\Omega,g_\Omega;0)$ is the isotropy subgroup at 0 of the group of biholomorphic isometries of $\Omega$.  The identity component $\text{\rm Aut}_0(Z)$ of the
automorphism group $\text{\rm Aut}(Z)$ is a complexification of $G$, and we will write $\text{\rm Aut}_0(Z) = G^\mathbb C$.  There is a unique compact real form $G_c$ of $G^\mathbb C$ such that, writing $\frak g_c$ resp.\,$\frak g^\mathbb C$ for the Lie algebras of $G_c$ resp.\,$G^\mathbb C$, we have $\frak g_c = \sqrt{-1}\frak m \oplus \frak k \subset \frak g_\Omega \otimes_{\mathbb R} \mathbb C = \frak g^\mathbb C$. There is a unique $G_c$-invariant K\"ahler-Einstein metric $g_c$ on $Z$ such that $g_c$ agrees with $g_\Omega$ at $0 \in \Omega$. It defines the structure of a Hermitian symmetric space on $Z$, and $\big((\Omega,g_\Omega);(Z,g_c)\big)$ is a dual pair of Riemannian symmetric spaces (see 
\cite{s9}).  We have

\begin{lemma}
Let $\Omega$ be an irreducible bounded symmetric domain, $\text{\rm dim}(\Omega) =: n$, $g_\Omega$ be the normalized canonical K\"ahler-Einstein metric on $\Omega$, and denote by $R$ the curvature tensor of $(\Omega, g_\Omega)$. Let $x$ be any point on $\Omega$, $\chi \in T_x(\Omega)$ be a unit vector, and denote by $H_\chi$ the Hermitian bilinear form on $T_x(\Omega)$ defined by $H_\chi(\xi,\eta) := R_{{\chi}\ol{\chi}\xi\ol{\eta}}$. Let $s$ be a positive integer, $1 \le s \le n$, and let $\Pi \subset T_x(\Omega)$ be an $s$-dimensional complex vector subspace. Then, $\text{\rm Tr}_{g_\Omega}\big(H_\chi|_\Pi\big) \ge -(s+1)$. Moreover, if equality is attained for every unit vector $\chi$ of an $s$-dimensional complex vector subspace $\Pi' \subset T_x(\Omega)$, then $\Pi \subset T_x(\Omega)$ is a characteristic $s$-plane and $\Pi' = \Pi$.
\end{lemma}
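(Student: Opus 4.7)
The plan is to transfer the computation from $\Omega$ to its compact dual $Z$, and then to the ambient projective space $\mathbb P^N$ via the minimal embedding $\iota : Z \hookrightarrow \mathbb P^N$. Since the curvature tensor and the VMRT structure are both $G$-invariant, I would first reduce to $x = 0 \in \Omega \subset Z$ by choosing an element of $G = \text{Aut}_0(\Omega)$ that sends $0$ to $x$. At the origin the holomorphic tangent spaces $T_0(\Omega)$ and $T_0(Z)$ are canonically identified with $g_\Omega|_0 = g_c|_0$, and the duality of Hermitian symmetric spaces gives $R_{\chi\ol\chi\xi\ol\eta} = -R^Z_{\chi\ol\chi\xi\ol\eta}$.

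Next, I would apply the Gauss equation for the K\"ahler submanifold $(Z,g_c) \subset (\mathbb P^N, g_{FS})$, with $g_{FS}$ normalized so that $\iota^\ast g_{FS} = g_c$ (equivalently, holomorphic sectional curvature equal to $+2$, matching the convention that minimal disks on $\Omega$ have Gaussian curvature $-2$). Combined with the explicit Fubini--Study formula $R^{FS}_{\chi\ol\chi\xi\ol\eta} = g(\chi,\chi) g(\xi,\eta) + g(\chi,\eta) g(\xi,\chi)$, this yields the pointwise identity
\[
R_{\chi\ol\chi\xi\ol\xi} \;=\; -\|\chi\|^{2}\|\xi\|^{2} - |\langle\chi,\xi\rangle|^{2} + \|\sigma(\chi,\xi)\|^{2},
\]
where $\sigma$ denotes the second fundamental form of $\iota$ at the point under consideration.

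Summing this identity over an orthonormal basis $e_1,\ldots,e_s$ of $\Pi$, with $\chi$ a unit vector, then gives
\[
\text{Tr}_{g_\Omega}(H_\chi|_\Pi) \;=\; -s \;-\; \|\mathrm{pr}_\Pi\chi\|^{2} \;+\; \sum_{i=1}^{s} \|\sigma(\chi,e_i)\|^{2},
\]
from which the bound $\geq -(s+1)$ is immediate using $\|\mathrm{pr}_\Pi\chi\|^{2} \leq \|\chi\|^{2} = 1$ and the nonnegativity of the Gauss term. Equality forces (i) $\chi \in \Pi$ and (ii) $\sigma(\chi,e_i)=0$ for every $i$. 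If equality holds for every unit $\chi$ in an $s$-dimensional subspace $\Pi'$, condition (i) gives $\Pi'\subset\Pi$ and hence $\Pi' = \Pi$, while (ii) together with the symmetry of $\sigma$ and polarization yields $\sigma|_{\Pi\times\Pi}\equiv 0$.

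To finish, I would invoke the classical description of the VMRT on $Z$ in its minimal embedding (cut out by quadrics): a direction $[\chi]\in\mathbb P T_x(Z)$ lies in $\mathscr C_x(Z)$ if and only if $\sigma(\chi,\chi)=0$, since this is exactly the condition for the projective line in $\mathbb P^N$ tangent to $\chi$ at $x$ to lie in $Z$. Polarization then converts $\sigma|_{\Pi\times\Pi}\equiv 0$ into $\mathbb P\Pi \subset \mathscr C_x(Z) = \mathscr C_x(\Omega)$, i.e.\ $\Pi$ is a characteristic $s$-plane. The only non-routine point I anticipate is keeping normalizations consistent so that the Fubini--Study coefficient is exactly $1$ (thus producing the sharp constant $s+1$); once that is pinned down, the lemma reduces to the one-line trace inequality above together with the standard equality analysis.
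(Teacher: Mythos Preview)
Your proposal is correct and follows essentially the same approach as the paper: reduce to the origin by $G$-invariance, pass to the compact dual via $R=-W$, apply the Gauss equation for the minimal embedding $(Z,g_c)\hookrightarrow(\mathbb P^N,g_{FS})$ together with the explicit Fubini--Study curvature formula, and characterize characteristic vectors by the vanishing of the second fundamental form on squares. The only cosmetic difference is that you write the trace directly as $-s-\|\mathrm{pr}_\Pi\chi\|^2+\sum_i\|\sigma(\chi,e_i)\|^2$, whereas the paper decomposes $\chi=\mu+\nu$ with $\mu\in\Pi$, $\nu\perp\Pi$ and bounds the Fubini--Study contribution as $(s+1)\|\mu\|^2+s\|\nu\|^2$; these are the same computation.
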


\begin{proof}
Denote by $W$ the curvature tensor of $(Z,g_c)$.  As mentioned $g_\Omega$ agrees with $g_c$ at $0 \in \Omega \subset Z$.
Moreover, the curvature tensors of $(\Omega,g_\Omega)$ and $(Z,g_c)$ are opposite to each other at $0$ in the sense that, for any ordered quadruple $(\xi,\eta,\mu,\nu)$ of tangent vectors at $0$ of type (1,0) we have
$$
R_{{\xi}\ol{\eta}\mu\ol{\nu}} = -W_{{\xi}\ol{\eta}\mu\ol{\nu}} \, . \eqno{(1)}
$$
To prove Lemma 3.1 it suffices therefore to consider $(Z,g_c)$. The group $G^{\mathbb C}$ acts on ${\cal O}(1)$ and hence on $\Gamma(Z,{\cal O}(1))$.  Equipping the latter space with a $G_c$-invariant metric and hence endowing $\mathbb P(\Gamma(Z,{\cal O}(1)^*) \cong \mathbb P^N$ with the unique Fubini-Study metric $ds_{FS}^2$ of constant holomorphic sectional curvature $+2$ invariant under the action of $G_c$, the minimal embedding $\iota: Z \hookrightarrow \mathbb P^N$ is a holomorphic isometric embedding of $(Z,g_c)$ into $(\mathbb P^N,ds_{FS}^2)$. Denote by $F$ the curvature tensor of $(\mathbb P^N,ds_{FS}^2)$, and by $\tau$ the second fundamental form of the isometric embedding $\iota: (Z,g_c) \hookrightarrow (\mathbb P^N,ds_{FS}^2)$. At $0 \in Z$, identified with $\iota(0) \in \mathbb P^N$, for $\chi, \xi \in T_0(Z)$ by the Gauss equation we have
$$
W_{{\chi}\ol{\chi}\xi\ol{\xi}} = F_{{\chi}\ol{\chi}\xi\ol{\xi}} - \|\tau(\chi,\xi)\|^2
\le F_{{\chi}\ol{\chi}\xi\ol{\xi}}\, . \eqno{(2)}
$$
Note that
$$
F_{i\ol{j}k\ol{\ell}} = \delta_{ij}\delta_{k\ell} + \delta_{i\ell}\delta_{jk} \, \eqno{(3)}
$$
in terms of an orthonormal basis $\{e_1,\cdots,e_n\}$ of $T_0(Z)$ with respect to $g_c$.  Note also that for a unit vector $\chi \in T_0(\mathbb P^N)$ we have $F_{{\chi}\ol{\chi}\chi\ol{\chi}} = 2$.  Furthermore, if $\xi \in T_0(\mathbb P^N)$ is a unit vector orthogonal to $\chi$, we have $F_{{\chi}\ol{\chi}\xi\ol{\xi}} = 1$.  From now on $\chi$ denotes a unit vector in $T_0(Z)$. It follows from (2) that $W_{\chi\ol{\chi}\chi\ol{\chi}} = F_{\chi\ol{\chi}\chi\ol{\chi}} = 2$ if and only if $\tau(\chi,\chi) = 0$.   In other words, $\chi$ is a characteristic vector if and only if $\tau(\chi,\chi) = 0$.  Let $\Pi \in \text{\rm Gr}(s,T_0(Z))$.  Decompose $\chi = \mu + \nu$ where $\mu \in \Pi$ and $\nu \perp \Pi$. Let $\{\xi_1,\xi_2,\cdots,\xi_s\}$ be an orthonormal basis of $\Pi$ such that $\mu \in \mathbb C\xi_1$.  Note from (3) that 
$F_{{\mu}\ol{\nu}\xi\ol{\xi'}} = 0$ for any $\xi, \xi' \in \Pi$. 
Let $H^c_\chi$ be the Hermitian form on $T_0(Z)$ defined by $H^c_\chi(\xi,\eta) = W_{\chi\ol{\chi}\xi\ol{\eta}}$.  We compute now
$$
\text{\rm Tr}_{g_c}\big(H^c_\chi|_\Pi\big) = \sum_{i=1}^s W_{{\chi}\ol{\chi}\xi_i\ol{\xi_i}} \le \sum_{i=1}^s F_{{\chi}\ol{\chi}\xi_i\ol{\xi_i}}
= \sum_{i=1}^s F_{{\mu}\ol{\mu}\xi_i\ol{\xi_i}} + \sum_{i=1}^s F_{{\nu}\ol{\nu}\xi_i\ol{\xi_i}} \, , \eqno{(4)}
$$
recalling that $F_{{\mu}\ol{\nu}\xi\ol{\xi'}} = 0$ for any $\xi, \xi' \in \Pi$. It follows that
{\small
$$
\text{\rm Tr}_{g_\Omega}\big(H_\chi|_\Pi\big) = -\text{\rm Tr}_{g_c}\big(H^c_\chi|_\Pi\big)
\ge  - (s+1)\|\mu\|^2 -s\|\nu\|^2 \ge -(s+1)(\|\nu\|^2+\|\mu\|^2) = -(s+1) \, . \eqno{(5)}
$$
}
From the intermediate inequalities in (4) and (5) it follows that equality holds if and only if $\nu = 0$ and $\tau(\chi,\xi)$ = 0 for any $\xi \in \Pi$, i.e., if and only if $\chi \in \Pi$ and $\tau(\chi,\xi) = 0$ for any $\xi \in \Pi$.  If $\Pi \in \text{\rm Gr}(s,T_0(\Omega))$ is such that equality holds in (5) for every unit vector $\chi \in \Pi'$ it follows that $\Pi' = \Pi$ and $\tau(\xi,\eta) = 0$ for and $\xi, \eta \in \Pi$.  In particular $\tau(\xi,\xi) = 0$ for any $\xi \in \Pi$, so that
$\Pi \subset T_0(\Omega) = T_0(Z)$ is a characteristic $s$-plane, as desired.
\end{proof}

\vspace{1.2mm}
\noindent
{\it Proof of Theorem 2.1.}
By hypothesis the tangent sequence  $0 \to T(S) \to T(X)|_S \to N_{S|X} \to 0$ splits holomorphically over $S$, $T(X)|_S = T(S) \oplus {\cal N}$, where ${\cal N} \subset T(X)|_S$ is a lifting of $N_{S|X}$ to $T(X)|_S$. Let $R$ denote the curvature tensor of $(X,g)$ and $\sigma$ denote the second fundamental form of $(S,g|_S) \ \hookrightarrow (X,g)$. Computing the curvature tensor $R^S$ of $(S,g|_S)$, for any point $x \in S$, and any pair of vectors $\alpha, \beta \in T_x(S)$, by the Gauss equation we have
$$
R^S_{\alpha\ol{\alpha}\beta\ol{\beta}} = R_{\alpha\ol{\alpha}\beta\ol{\beta}} - \|\sigma(\alpha,\beta)\|^2 \, . \eqno{(1)}
$$
Let $\{\beta_1, \cdots, \beta_s\}$, $s = \dim(S)$ be an orthonormal basis of $T_x(S)$.  Denoting by $Ric^S$ the Ricci tensor of $(S,g|_S)$, for any $\alpha \in T_x(S)$ we have
$$
Ric^S_{\alpha\ol{\alpha}} = \sum_{i=1}^{s}R^S_{\alpha\ol{\alpha}\beta_i\ol{\beta_i}}
= \sum_{i=1}^{s}R_{\alpha\ol{\alpha}\beta_i\ol{\beta_i}} - \sum_{i=1}^{s}\|\sigma(\alpha,\beta_i)\|^2 \, . \eqno{(2)}
$$
By hypothesis $\mathbb PT_x(S) \subset {\mathscr C}_x(X)$ for every point $x \in X$. By the standard calculation implicit in (2) and (3) in the proof of Lemma 3.1 we have $\sum_{i=1}^{s}R_{\alpha\ol{\alpha}\beta_i\ol{\beta_i}} = -(s+1)\|\alpha\|^2$. We also write $\rho := \sqrt{-1}\sum_{i=1}^{s}Ric^S_{i\ol{j}}dz^i\wedge d\ol{z^j}$ for the Ricci form of $(S,g|_S)$, $\rho_{i\ol{j}} := Ric^S_{i\ol{j}}$. Hence we have
$$
\rho_{\alpha\ol{\alpha}} = -(s+1)\|\alpha\|^2 - \sum_{i=1}^{s}\|\sigma(\alpha,\beta_i)\|^2 \le -(s+1)\|\alpha\|^2\, . \eqno{(3)}
$$
We can make use of the holomorphic splitting to consider $T(S)$ as a quotient bundle of $T(X)|_S$.  In other words, we consider the
short exact sequence
$$
(\sharp)\,\,\, 0 \to {\cal N} \to T(X)|_S \overset{\epsilon}\longrightarrow{{\cal Q}}: = T(X)|_S/{\cal N} \to 0,
$$
${\cal Q} \cong T(S)$ as holomorphic vector bundles.  Let $h$ be the quotient Hermitian metric on ${\cal Q}$ induced by the Hermitian metric $g$ on $T(X)$ from the short exact sequence and denote by $\Theta$ the curvature tensor of $({\cal Q},h)$.  For any $x \in S$ and any two vectors $\xi, \eta \in {\cal Q}_x$, we have $h(\xi,\eta) = g(\xi',\eta')$ where $\xi'$ and $\eta'$ belong to the orthogonal complement ${\cal N}_x^\perp$ of ${\cal N}_x$ in $T_x(X)$ and they are uniquely determined by $\epsilon(\xi') = \xi, \epsilon(\eta') = \eta$. For $\xi \in {\cal Q}_x$ and $\alpha \in T_x(S)$ we have the curvature formula
$$
\Theta_{\xi\ol{\xi}\alpha\ol{\alpha}} = R_{\xi'\ol{\xi'}\alpha\ol{\alpha}} + \|\lambda(\alpha,\overline{\xi'})\|^2 \ge R_{\xi'\ol{\xi'}\alpha\ol{\alpha}} \, ,\eqno{(4)}
$$
for the quotient bundle $({\cal Q},h)$, where, denoting by $0 \to {\cal Q}^* \to T^*(X)|_S \to {\cal N}^* \to 0$ the dual of the short exact sequence $(\sharp)$,
we have $\lambda(\alpha,\overline{\xi'}) := \zeta(\alpha,\chi)$ for the second fundamental form $\zeta$ of $({\cal Q}^*,g^*|_{{\cal Q}^*}) \hookrightarrow (T^*(X)|_S,g^*)$, $\chi$ being the lifting of $\ol{\xi'}$ to ${\cal Q}^*_x$ by the Hermitian metric $g^*|_{{\cal Q}^*}$ (see~\cite{s8}). Here ${\cal Q}^* \subset T^*(X)|_S$ is the holomorphic subbundle of tangent covectors which annihilate $\cal N$, and we have canonically an isomorphism ${\cal Q}^* \cong T(S)^*$ as holomorphic vector bundles. Denote by $\theta
:= \sqrt{-1}\sum_{i=1}^{s}\theta_{i\ol{j}}dz^i\wedge d\ol{z^j}$ the curvature form of $\big(\det({\cal Q}),\det(h)\big)$.  We have
$$
\theta_{\alpha\ol{\alpha}} = \sum_{i=1}^{s}\Theta_{\beta_i\ol{\beta_i}\alpha\ol{\alpha}}
\ge \sum_{i=1}^{s}R_{\beta_i\ol{\beta_i}\alpha\ol{\alpha}} \eqno{(5)}
$$
for any orthonormal basis $\{\beta_1,\cdots,\beta_s\}$ of ${\cal N}_x^\perp$.  By (4), (5) and Lemma 3.1 we have
$$
\theta_{\alpha\ol{\alpha}} \ge -(s+1)\|\alpha\|^2 \, .\eqno{(6)}
$$
From (3) and (6),
we have
$$
\rho_{\alpha\ol{\alpha}} \le \theta_{\alpha\ol{\alpha}} - \sum_{i=1}^{s}\|\sigma(\alpha,\beta_i)\|^2 \, .\eqno{(7)}
$$
On the other hand, $\rho$ and $\theta$ are closed smooth (1,1)-forms on $S$ representing $2\pi$ times the first Chern class of $T(S)$ over $S$.  Denoting by $\omega_g$ the K\"ahler form of $(X,g)$ we have
$$
0 = \int_S (\rho-\theta) \wedge \omega_g^{s-1} \le 0 \, , \eqno{(8)}
$$
which forces by (7) that the trace of $\rho-\theta\le 0$ vanishes identically on $S$, hence $\rho \equiv \theta$ and $\sigma(\alpha,\beta) = 0$ for any $x \in S$ and for any $\alpha,\beta \in T_x(S)$. In other words, $(S,g|_S) \hookrightarrow (X,g)$ is totally geodesic.  Moreover, from (6) and (8) it follows that $\rho_{\alpha\ol{\alpha}} = \theta_{\alpha\ol{\alpha}} = -(s+1)\|\alpha\|^2$ for every tangent vector $\alpha \in T_x(S)$, and by Lemma (3.1) we conclude that ${\cal N}_x^\perp \subset T_x(X)$ is a characteristic $2$-plane and that furthermore $T_x(S) = {\cal N}_x^\perp$.  It follows that ${\cal N} = T(S)^\perp$, and the proof of Theorem 2.1 is complete. \quad $\square$

The proof given here works also to give a proof of  [16, Theorem 1] in the case where the ambient manifold $(X,g)$ is a flat Euclidean space, but it fails when $(X,g)$ is the complex projective space endowed with the Fubini-Study metric. One can prove the analogue of Theorem 2.1 when $(X,g)$ is replaced by an irreducible Hermitian symmetric space $(Z,g)$ of the compact type by adapting the proof of Theorem 1 in~\cite{s16} and showing that for a compact characteristic complex submanifold $(S,g|_S) \ \hookrightarrow (Z,g)$ the $(N^*_{S|Z} \otimes T_S)$-valued (0,1)-form $\mu$ derived from the second fundamental form $\sigma_{S|Z}$ remains $\overline{\partial}^*$-closed. This is the case because $\sigma_{S|Z}$ remains holomorphic whenever $\mathbb PT(S) \subset {\mathscr C} (Z)$, since $\partial_{\overline{\chi}}\sigma_{\beta\gamma} = -R_{{\beta}\overline{\chi}\gamma}\ \text{\rm mod} \ T_x(S)$, and the R.H.S. vanishes because

\begin{enumerate}
\item[$(\dag)$] $R_{{\beta}\overline{\chi}\gamma\overline{\eta}} = 0$ whenever $T_x(S) \subset T_x(Z)$ consists of characteristic vectors and $\eta$ is orthogonal to $T_x(S)$.
\end{enumerate}

When $Z \cong \mathbb P^n$, $(\dag)$ follows from the curvature formula for the Fubini-Study metric
(see (3) in the proof of Lemma 3.1). In general $(\dag)$ follows from the fact that, viewing the curvature tensor $R_x$ at $x \in S$ as a Hermitian bilinear form $P_x$ on $S^2T_x(Z)$, the symmetric square $\alpha\odot \alpha$ of any characteristic vector $\alpha$ is an eigenvector of $P$, giving $R_{{\alpha}\overline{\alpha}\gamma\overline{\alpha}} = 0$ whenever $\gamma$ is orthogonal to $\alpha$.
Then $(\dag)$ follows by polarization whenever $T_x(S) \subset T_x(Z)$ is a characteristic $s$-plane, $s = \dim(S)$. (Alternatively, $(\dag)$ follows from the case of the Fubini-Study metric and the formula for the curvature tensor $\big(W_{{\beta}\overline{\chi}\gamma\overline{\eta}}\big)$ of  $(Z,g_c)$ which follows from (2) in the proof of Lemma 3.1 by polarization.)

\vskip 0.2cm
In this article we are only concerned with compact complex submanifolds of quotients of bounded symmetric domains, for which the new and more direct proof given here for Theorem 2.1 suffices.

\section{Splitting complex submanifolds of quotients of irreducible bounded symmetric domains: splitting of tensor bundles on $\Omega$ versus splitting of tangent sequence on $S$}

Let $0\rightarrow V\overset{\iota}{\rightarrow} U\overset{p}{\rightarrow} W\rightarrow 0$ be an exact sequence of holomorphic vector bundles over a complex manifold $M$. The sequence splits holomorphically if and only if there exists a holomorphic bundle map $\pi:U\rightarrow V$ such that $\pi\circ\iota =\text{\rm id}_V$. (The kernel of $\pi$ then gives a holomorphic complement of $V$ in $U$.) Furthermore, it is also equivalent to the existence of a holomorphic bundle map $q:W\rightarrow U$ such that $p\circ q=\text{\rm id}_W$. (It is now the image of $q$ which gives a holomorphic complement of $V$ in $U$.)

Thus, for a splitting complex submanifold $S\subset X:=\Omega/\Gamma$, from the splitting of the holomorphic tangent sequence $$0\rightarrow T(S)\overset{\iota}{\longrightarrow} T(X)|_S\overset{p}{\longrightarrow} N_{S|X}\rightarrow 0,$$ we get a holomorphic bundle map $\pi:T(X)|_S\rightarrow T(S)$ with $\pi(v)=v$ for every $v\in T(S)$, where we have identified $T(S)$ as a subbundle of $T(X)|_S$. The projection $\pi$ naturally induces projection maps on various tensor bundles constructed from the tangent bundles of $S$ and $X$, and thus gives the splitting of the exact sequences associated to these tensor bundles, as follows.

Let $T^*(S)$ (resp.\,$T^*(X)$) be the holomorphic cotangent bundle of $S$ (resp.\,$X$). Consider the dual of the tangent sequence (i.e. the cotangent sequence), we write
$$
    0\rightarrow N^*_{S|X}\overset{i}{\longrightarrow} T^*(X)|_S\overset{\Pi}{\longrightarrow} T^*(S)\rightarrow 0.
$$
Now, given $\pi:T(X)|_S\rightarrow T(S)$ which splits the tangent sequence, we get a holomorphic bundle map $\pi^*:T^*(S)\rightarrow T^*(X)|_S$ defined by $\pi^*(v^*)=v^*\circ\pi$, where $v^*\in T^*(S)$. Note that the condition $\pi(v)=v$ for every $v\in T(S)$ implies that $\Pi\circ\pi^*=\text{\rm id}_{T^*(S)}$. Therefore, the cotangent sequence splits holomorphically.

On a complex manifold $M$, recall that $T^r_s(M):=T(M)^{\otimes r}\otimes T^*(M)^{\otimes s}$ denotes the holomorphic bundle of $(r,s)$-tensors. Define $\iota^r_s:=\iota^{\otimes r}\otimes \pi^{*\otimes s}:T^r_s(S)\rightarrow T^r_s(X)|_S$, then one gets an exact sequence
$$
    0\rightarrow T^r_s(S)\overset{\iota^r_s}{\longrightarrow} T^r_s(X)|_S\rightarrow N^r_s\rightarrow 0,
$$
where $N^r_s:=T^r_s(X)|_S/T^r_s(S)$. Note that $S$ being a splitting submanifold in $X$ is essential for us to embed $T^r_s(S)$ into $T^r_s(X)|_S$ when $s>0$.

\vskip 0.2cm
If we further define $\pi^r_s:=\pi^{\otimes r}\otimes \Pi^{\otimes s}:T^r_s(X)|_S\rightarrow T^r_s(S)$, then we have $\pi^r_s\circ\iota^r_s=\text{\rm id}_{T^r_s(S)}$. Thus, the above sequence of $(r,s)$-tensor bundles also splits holomorphically. It is clear that all the aforementioned bundle maps preserve the symmetric tensor products and hence similar conclusions hold for holomorphic bundles of symmetric tensors.

\vskip 0.2cm
In the rest of this section, we will only be considering the case for symmetric (2,0)-tensors
$$
    0\rightarrow S^2T(S)\mathop{\rightleftarrows}_{\pi_\sigma}^{\sigma} S^2T(X)|_S
$$
and the case where $(r,s)=(1,1)$,
$$
    0\rightarrow T^1_1(S)\mathop{\rightleftarrows}_{\pi_\tau}^{\tau}  T^1_1(X)|_S.
$$
Here, in order to simplify the notations, we write $\sigma:=\iota^2_0$, $\pi_\sigma:=\pi^2_0$, $\tau:=\iota^1_1$, $\pi_\tau:=\pi^1_1$ so that $\pi_\sigma\circ\sigma = \text{\rm id}_{S^2T(S)}$ and $\pi_\tau\circ\tau=\text{\rm id}_{T^1_1(S)}$.

\vskip 0.2cm
Now suppose $S\subset X$ is a compact splitting complex submanifold, where $X=\Omega/\Gamma$ with $\text{\rm rank}(\Omega)\geq 2$ and $\Omega$ is irreducible. We recall the following theorems by Calabi-Vesentini and Borel.

\begin{theorem}[{\rm See \cite{s3}, \cite{s5}}]
Let $\Omega$ be an irreducible bounded symmetric domain of rank $\ge 2$ and $\, \Gamma \subset \text{\rm Aut}(\Omega)$ be a torsion-free discrete subgroup, and write $X := \Omega/\Gamma$.
Then, the endomorphism $R_\sigma:S^2T(X)\rightarrow S^2T(X)$ has exactly two eigenvalues, which are non-zero, and there is a two-factor parallel direct-sum decomposition $$S^2T(X)=A\oplus B.$$
\end{theorem}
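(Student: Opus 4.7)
My approach would be representation-theoretic, leveraging two standard facts: $R_\sigma$ is parallel (already noted in \S2.2), and the isotropy action of $K$ at a base point is transitive on the unit sphere in $T_0(\Omega)$ only very weakly but acts irreducibly on $T_0(\Omega)$ (because $\Omega$ is irreducible). First, since $R_\sigma$ is parallel, its spectral decomposition at one fiber propagates by parallel transport to a global parallel (and, on the K\"ahler manifold $X$, holomorphic) direct-sum decomposition of $S^2T(X)$ into eigenbundles. It therefore suffices to analyze $R_\sigma|_0$ acting on the single fiber $S^2T_0(\Omega) = S^2\mathfrak m^+$, where $\mathfrak m^+$ is the $(1,0)$-part of $\mathfrak m$ as a $K^{\mathbb C}$-module. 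Because the curvature tensor of a Hermitian symmetric space is $K$-invariant and all constructions are $\mathbb C$-linear, the endomorphism $R_\sigma|_0$ commutes with the $K^{\mathbb C}$-action on $S^2\mathfrak m^+$.

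The crux is then the $K^{\mathbb C}$-decomposition of $S^2\mathfrak m^+$. I would verify, case by case, that for $\Omega$ irreducible of rank $\ge 2$ this representation splits into exactly two irreducible $K^{\mathbb C}$-submodules, and invoke Schur's lemma to conclude that $R_\sigma|_0$ acts as a (single) scalar on each. For type I, $\mathfrak m^+ \cong V\otimes W$ with $V=\mathbb C^p$, $W=\mathbb C^q$ and $K^{\mathbb C}=GL(V)\times GL(W)/\mathbb C^*$, and the classical identity
\[
S^2(V\otimes W) \,=\, (S^2V\otimes S^2W)\,\oplus\,(\Lambda^2V\otimes\Lambda^2W)
\]
gives exactly two irreducibles provided $p,q\ge 2$, i.e.\ rank $\ge 2$; analogous explicit decompositions handle types II, III, IV and the exceptional types $E_{\mathrm{III}}, E_{\mathrm{VII}}$. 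This explicit case-by-case check, carried out by Calabi--Vesentini for the classical series and by Borel for the exceptional series, is where the real work lies and is the main obstacle.

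Finally, to see the two eigenvalues $\lambda_A,\lambda_B$ are distinct and non-zero, I would compute each on a convenient representative. One of the summands --- the ``Cartan'' (highest-weight) piece --- contains $\alpha\odot\alpha$ for any characteristic vector $\alpha$; pairing against $\alpha\odot\alpha$ recovers the holomorphic sectional curvature $R_{\alpha\bar\alpha\alpha\bar\alpha}=-2$ in the normalization of Lemma 3.1, which is non-zero, so $\lambda_A\ne 0$. For the second summand I would pick a highest-weight vector of the form $\alpha\odot\beta-\beta\odot\alpha'$ adapted to a pair of orthogonal characteristic directions attached to distinct long roots and compute directly; alternatively, if $\lambda_B=0$, the whole second $K^{\mathbb C}$-submodule would annihilate $R_\sigma$, forcing a large family of bisectional curvatures to vanish in a pattern incompatible with $(\Omega,g_\Omega)$ being of strictly negative holomorphic sectional curvature and irreducible. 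Distinctness $\lambda_A\ne\lambda_B$ then follows because otherwise $R_\sigma$ would be a scalar endomorphism, which would make $\Omega$ of constant holomorphic sectional curvature and hence of rank $1$, contradicting the hypothesis rank $\ge 2$.
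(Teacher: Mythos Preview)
The paper does not supply its own proof of this theorem; it is quoted from Calabi--Vesentini \cite{s5} and Borel \cite{s3}, and the discussion in \S2.2 already indicates the mechanism (parallelism of $R_\sigma$ reduces the question to a single fiber, where one computes the $K$-decomposition of $S^2T_0(\Omega)$). Your proposal is exactly a reconstruction of that strategy: pass to $S^2\frak m^+$ at the origin, use $K^{\mathbb C}$-equivariance of $R_\sigma|_0$, verify case by case that $S^2\frak m^+$ has precisely two irreducible summands, and apply Schur. So there is nothing in the paper to compare against, and your outline matches the approach of the cited sources.

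One minor caution: your soft alternative argument that $\lambda_B=0$ would be ``incompatible with strictly negative holomorphic sectional curvature'' does not go through as stated, since holomorphic sectional curvatures only test the Hermitian form $P$ on squares $\xi\odot\xi$, and the projection of such a square onto the Cartan summand can by itself account for the strict negativity even if the other eigenvalue vanished. The direct computation on a well-chosen vector (which you list first, and which is what Calabi--Vesentini and Borel in fact carry out, obtaining explicit non-zero values in every case) is the reliable route. Your argument for $\lambda_A\ne\lambda_B$ via ``scalar $R_\sigma$ $\Rightarrow$ constant holomorphic sectional curvature $\Rightarrow$ rank $1$'' is, by contrast, correct.
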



\begin{theorem} [{\rm See \cite{s3}}]
Let $\, \Omega$ be an irreducible bounded symmetric domain of rank $\ge 2$ and $\, \Gamma \subset \text{\rm Aut}(\Omega)$ be a torsion-free discrete subgroup, and write $X := \Omega/\Gamma$. Then, there is a parallel direct-sum decomposition
$$T^1_1(X)=C\oplus D,$$
where $C$ is the eigenbundle corresponding to the kernel of $R_\tau:T^1_1(X)\rightarrow T^1_1(X)$ and $\text{\rm rank}(C)>0$. If we write $G=\text{\rm Aut}(\Omega)$ and $K\subset G$ for the isotropy group at $0\in\Omega$, then $\text{\rm rank}(D)=\text{\rm dim}_{\mathbb R} \frak k$, where $\frak k$ is the Lie algebra of $K$.
\end{theorem}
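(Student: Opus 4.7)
The strategy is to work at the base point $0\in\Omega$, identify $R_\tau$ in Lie-theoretic terms, read off its kernel and image, and then globalize by $G$-invariance and descent to $X=\Omega/\Gamma$.

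Under the Cartan decomposition $\frak g_\Omega=\frak m\oplus\frak k$ with complexification $\frak g^{\mathbb C}=\frak m^{1,0}\oplus\frak k^{\mathbb C}\oplus\frak m^{0,1}$, the identifications $T_0(\Omega)\cong\frak m^{1,0}$ and (via $g_\Omega$) $T_0^*(\Omega)\cong\frak m^{0,1}$ give $T^1_1(\Omega)|_0\cong\frak m^{1,0}\otimes\frak m^{0,1}\cong\mathrm{End}(\frak m^{1,0})$ as $K$-modules. The Hermitian symmetric curvature formula $R(X,\ol Y)Z=-[[X,\ol Y],Z]$ for $X,Y,Z\in\frak m^{1,0}$, together with the contraction of indices defining $R^{\alpha\beta}_{\gamma\delta}$ in the excerpt, identifies $R_\tau$ at $0$ with the composition
\[
\frak m^{1,0}\otimes\frak m^{0,1}\xrightarrow{\;[\,\cdot\,,\,\cdot\,]\;}\frak k^{\mathbb C}\xrightarrow{\;\mathrm{ad}\;}\mathrm{End}(\frak m^{1,0})\cong\frak m^{1,0}\otimes\frak m^{0,1}.
\]

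Each factor is tractable. Simplicity of $\frak g_\Omega$ (equivalent to irreducibility of $\Omega$) together with $[\frak m^{1,0},\frak m^{1,0}]=0$ implies $[\frak m^{1,0},\frak m^{0,1}]=\frak k^{\mathbb C}$: indeed $[\frak m,\frak m]+\frak m$ is a nonzero ideal of $\frak g_\Omega$, hence equals $\frak g_\Omega$, so $[\frak m,\frak m]=\frak k$, and complexifying gives the claim. On the other hand, $\mathrm{ad}:\frak k^{\mathbb C}\to\mathrm{End}(\frak m^{1,0})$ is injective because the isotropy representation of $K$ on $T_0(\Omega)$ is faithful for an irreducible Hermitian symmetric space. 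Hence $\ker(R_\tau)$ equals the kernel of the bracket, while $\mathrm{image}(R_\tau)$ is an isomorphic copy of $\frak k^{\mathbb C}$, giving $\mathrm{rank}(D)=\dim_{\mathbb C}\frak k^{\mathbb C}=\dim_{\mathbb R}\frak k$. Complete reducibility of $\mathrm{End}(\frak m^{1,0})$ as a $K$-module (since $K$ is compact), together with the $K$-equivariance of $R_\tau$, forces $R_\tau$ to be semisimple, yielding the direct-sum splitting $T^1_1(\Omega)|_0=C_0\oplus D_0$ into kernel and image. The splitting propagates to all of $\Omega$ by $G$-translation and descends to $X$ by $\Gamma$-invariance; both summands are parallel subbundles because they are eigenbundles of the parallel endomorphism $R_\tau$.

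It remains to show $\mathrm{rank}(C)>0$ when $\mathrm{rank}(\Omega)\geq 2$, equivalently to produce a nonzero $K$-submodule of $\frak m^{1,0}\otimes\frak m^{0,1}$ on which the bracket vanishes. I expect this to be the main obstacle: while it follows at once from the full $K$-module decomposition of $\mathrm{End}(\frak m^{1,0})$ (computed case-by-case by Calabi--Vesentini for the classical types and Borel for the exceptional types, mirroring the two-summand decomposition of $S^2T(X)$ in Theorem 3.1), a uniform type-free argument likely requires either root-theoretic input, or the observation that for $\mathrm{rank}(\Omega)\geq 2$ there exist pairs of strongly orthogonal characteristic vectors in $T_0(\Omega)$ spanning a $\frak k$-invariant rank-$2$ polysphere, from which one can explicitly build an element of $\frak m^{1,0}\otimes\frak m^{0,1}$ in the kernel of the bracket.
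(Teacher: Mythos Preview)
The paper does not supply its own proof of this statement; it is quoted as a result of Borel (\cite{s3}) and used as a black box in the proof of Theorem~2.3. So there is no paper proof to compare against, and your outline stands on its own as a reconstruction of Borel's argument.

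Your identification of $R_\tau$ at $0$ (up to sign) with the composite $\frak m^{1,0}\otimes\frak m^{0,1}\to\frak k^{\mathbb C}\to\mathrm{End}(\frak m^{1,0})$ is correct and is the heart of the matter; the surjectivity of the bracket (from simplicity of $\frak g_\Omega$) and the injectivity of $\mathrm{ad}$ (faithfulness of isotropy) are both fine, and together they give $\mathrm{rank}(D)=\dim_{\mathbb R}\frak k$ as claimed.

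One point deserves tightening. The assertion that $K$-equivariance of $R_\tau$ together with complete reducibility of $\mathrm{End}(\frak m^{1,0})$ \emph{forces} $R_\tau$ to be semisimple is not valid as stated: a $K$-equivariant endomorphism of a completely reducible $K$-module can be nilpotent when an irreducible constituent occurs with multiplicity. What you actually need is either (i) that $R_\tau$ is self-adjoint for the Hermitian inner product on $T^1_1$ induced by $g_\Omega$---which follows from the K\"ahler symmetry $R_{\mu\ol\alpha\nu\ol\beta}=R_{\nu\ol\beta\mu\ol\alpha}$ and gives a genuine eigenbundle decomposition---or (ii) simply that $C=\ker R_\tau$ is a $K$-submodule and hence admits a $K$-invariant complement $D$, with $\mathrm{rank}(D)=\dim(\mathrm{image}\,R_\tau)=\dim_{\mathbb R}\frak k$ by rank--nullity. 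Either route closes the gap; the statement of the theorem only requires (ii).

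Your instinct on $\mathrm{rank}(C)>0$ is right and the strongly-orthogonal-roots argument you sketch is in fact the clean uniform one: for $\mathrm{rank}(\Omega)\ge 2$ there exist strongly orthogonal noncompact positive roots $\gamma_1,\gamma_2$, and then $[e_{\gamma_1},e_{-\gamma_2}]\in\frak g_{\gamma_1-\gamma_2}=0$ since $\gamma_1-\gamma_2$ is not a root, so $e_{\gamma_1}\otimes e_{-\gamma_2}$ is a nonzero element of the kernel of the bracket. This avoids any case-by-case computation. (For contrast, in rank~$1$ one has $\frak k^{\mathbb C}\cong\frak{gl}(n)$ acting as the full endomorphism algebra, so $C=0$; the hypothesis $\mathrm{rank}(\Omega)\ge 2$ is exactly what is needed here.)
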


Now consider
$$
\begin{matrix}
    S^2T(S)&\overset{\sigma}{\longrightarrow} &S^2T(X)|_S&\overset{\pi_\sigma}{\longrightarrow}& S^2T(S),\\
    &&\parallel&&\\
    &&A|_S\oplus B|_S&&
\end{matrix}
$$
where $\pi_\sigma\circ\sigma=\text{\rm id}_{S^2T(S)}$. From the decomposition $S^2T(X) = A \oplus B$ we define the canonical projections $p_A:S^2T(X)\rightarrow A$ and $p_B:S^2T(X)\rightarrow B$ and use them to decompose the identity map as
$$
    \text{\rm id}_{S^2T(S)} = E_A+ E_B,
$$
where $E_A=\pi_\sigma\circ p_A\circ\sigma$ and $E_B=\pi_\sigma\circ p_B\circ\sigma$ are both endomorphisms on $S^2T(S)$. We have the following observation.

\begin{prop}
The endomorphisms $E_A$, $E_B$ are parallel with respect to some K\"ahler-Einstein metric on $S$.
\end{prop}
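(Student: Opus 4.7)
The plan is to combine the Aubin--Yau existence theorem with the classical principle that a holomorphic endomorphism of a Hermitian--Einstein bundle over a compact Kähler manifold is necessarily parallel. By Proposition 2.1, the compact complex submanifold $S \subset X$ has ample canonical line bundle and hence admits a Kähler--Einstein metric $g_S$ of negative Ricci curvature, unique up to a positive scaling factor. Fix such a $g_S$; this is the metric with respect to which I will establish the parallelism of $E_A$ and $E_B$.

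First I verify that $E_A$ and $E_B$ are holomorphic sections of $\text{End}(S^2T(S))$. The splitting $\pi:T(X)|_S \to T(S)$ of the tangent sequence is holomorphic by the splitting hypothesis on $S$, and hence so are $\sigma = \iota^2_0$ and $\pi_\sigma = \pi^2_0$ by the tensor-product construction reviewed at the beginning of the section. The projections $p_A, p_B : S^2T(X) \to S^2T(X)$ arise from the parallel, and in particular holomorphic, direct-sum decomposition $S^2T(X) = A \oplus B$ furnished by Theorem 4.1. Composition and restriction to $S$ preserve holomorphicity, so both $E_A = \pi_\sigma \circ p_A \circ \sigma$ and $E_B = \pi_\sigma \circ p_B \circ \sigma$ are holomorphic endomorphisms of $S^2T(S)$.

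The Hermitian metric $h$ on $S^2T(S)$ induced by $g_S$ is Hermitian--Einstein, since $g_S$ being Kähler--Einstein means that $T(S)$ with its induced metric satisfies $\sqrt{-1}\,\Lambda_{g_S}F_{T(S)} = \mu\,\text{id}_{T(S)}$ for a constant $\mu$, and this Einstein property is transmitted to any associated tensor bundle. The final step invokes the standard Bochner-type identity: for a holomorphic endomorphism $f$ of any Hermitian--Einstein bundle $(V,h)$ over a compact Kähler manifold, the Einstein contributions on the two diagonal factors of $V^*\otimes V$ cancel, yielding
\[
\sqrt{-1}\,\Lambda_{g_S}\,\partial\bar\partial\,|f|_h^2 \;=\; |\nabla' f|_h^2,
\]
where $\nabla'$ denotes the $(1,0)$-part of the Chern connection on $(\text{End}(V),h)$. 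Integrating against $\omega_{g_S}^{\dim S - 1}$ on the compact manifold $S$ and applying Stokes' theorem forces $\nabla' f \equiv 0$, which combined with $\bar\partial f = 0$ gives $\nabla f \equiv 0$. Applying this with $f = E_A$ and $f = E_B$ yields the desired parallelism.

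I do not anticipate a serious obstacle here: the argument is a clean combination of Aubin--Yau and the classical Hermitian--Einstein parallelism principle, with the only substantive input being the observation that the holomorphic splitting of the tangent sequence makes $E_A$ and $E_B$ holomorphic endomorphisms of $S^2T(S)$, so that this principle applies.
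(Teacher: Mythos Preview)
Your proof is correct and follows essentially the same approach as the paper: obtain a K\"ahler--Einstein metric on $S$ via Proposition~2.1, observe that $E_A,E_B$ are holomorphic sections of $\text{End}(S^2T(S))\subset T^2_2(S)$, and conclude parallelism from the Bochner principle for such sections. The only difference is one of packaging: the paper invokes Bochner's theorem (Theorem~4.3) that every element of $H^0(M,T^r_r(M))$ on a compact K\"ahler--Einstein manifold is parallel, whereas you unfold this by passing through the Hermitian--Einstein structure on $S^2T(S)$ and the Bochner identity $\sqrt{-1}\,\Lambda\,\partial\bar\partial|f|^2=|\nabla' f|^2$---which is precisely how Bochner's theorem is proved.
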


\vspace{1.2mm}
\noindent
{\it Proof.}
From Proposition 2.1 we know that there exists a K\"ahler-Einstein metric on $S$. Since $E_A, E_B$ can be regarded as elements in $H^0(S, T^2_2(S))$, the proposition now follows from the following theorem of Bochner. \quad $\square$

\begin{theorem} [{\rm See \cite{s4}}]
Let $M$ be a compact K\"ahler-Einstein manifold, then all elements in $H^0(M, T^r_r(M))$ are parallel for every $r>0$.
\end{theorem}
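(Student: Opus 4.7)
The plan is to establish the theorem via the Bochner-Kodaira technique, by viewing $T^r_r(M)$ as a Hermitian-Einstein vector bundle with vanishing Einstein factor.

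First I would exploit the K\"ahler-Einstein hypothesis on $(M, g)$: writing $\mathrm{Ric}(g) = c \cdot g$, the mean Chern curvature of the holomorphic tangent bundle satisfies
$$\sqrt{-1}\,\Lambda_{\omega_g} F_{T(M)} \,=\, c \cdot \mathrm{id}_{T(M)},$$
so $(T(M), g)$ is Hermitian-Einstein with factor $c$. Passing to the dual, $(T^*(M), g^*)$ is Hermitian-Einstein with factor $-c$. By the functoriality of the Chern connection under tensor products, the induced Hermitian metric on
$$T^r_r(M) \,=\, T(M)^{\otimes r} \otimes T^*(M)^{\otimes r}$$
is Hermitian-Einstein with factor $rc + r(-c) = 0$, regardless of whether $c$ is positive, negative, or zero. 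This cancellation between covariant and contravariant slots, forced by the balanced index $(r,r)$, is the decisive structural input.

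Next I would apply the standard Weitzenb\"ock identity to a holomorphic section $T \in H^0(M, E)$ of a Hermitian holomorphic vector bundle $(E, h)$ on a compact K\"ahler manifold: because $\bar\partial T = 0$ one has
$$\nabla^* \nabla T \,=\, \sqrt{-1}\,\Lambda_{\omega_g} F_E \cdot T.$$
Taking $E = T^r_r(M)$ and recalling from the first step that the right-hand side vanishes identically, we obtain $\nabla^* \nabla T = 0$. The $L^2$-pairing with $T$ on the compact manifold $M$ then yields
$$0 \,=\, \langle \nabla^*\nabla T,\, T\rangle_{L^2} \,=\, \int_M \|\nabla T\|^2\, dV_g,$$
which forces $\nabla T \equiv 0$ on $M$, i.e., $T$ is parallel.

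The only bookkeeping point, and the place where I would exercise care, is in verifying that the Chern connection of the tensor product Hermitian metric on $T^r_r(M)$ coincides with the tensor connection induced from those on $T(M)$ and $T^*(M)$, so that the mean curvature endomorphism decomposes additively across the $2r$ slots. This is a standard functoriality property of the Chern connection, so no serious obstacle arises. Consequently no sign hypothesis on the K\"ahler-Einstein constant $c$ is needed, and an identical argument gives parallelism for holomorphic sections of the symmetric variants $S^p T(M) \otimes S^q T^*(M)$ whenever $p = q$, a fact which feeds back into the tensor bundle decompositions invoked elsewhere in the paper.
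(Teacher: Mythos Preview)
Your argument is correct. The paper does not supply its own proof of this statement; it is simply quoted as a classical result from Bochner--Yano~\cite{s4} and used as a black box. What you have written is essentially the modern Hermitian--Einstein reformulation of the original Bochner technique: the K\"ahler--Einstein condition makes $(T(M),g)$ Hermitian--Einstein with factor $c$, the dual has factor $-c$, and the balanced tensor product $T^r_r(M)$ therefore has vanishing mean curvature; the Weitzenb\"ock identity $\nabla^*\nabla T = \sqrt{-1}\,\Lambda_{\omega_g} F_E \cdot T$ for holomorphic $T$ then forces $\nabla T = 0$ after the $L^2$-pairing. This is exactly the mechanism behind the cited result, so your proof is both correct and faithful to the classical approach, just expressed in cleaner bundle-theoretic language.

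One minor remark: your displayed identity $\nabla^*\nabla T = \sqrt{-1}\,\Lambda_{\omega_g} F_E \cdot T$ for holomorphic $T$ is indeed correct on a K\"ahler manifold, since $\nabla^*\nabla = \partial_E^*\partial_E + \bar\partial_E^*\bar\partial_E$ on sections and the second summand vanishes on holomorphic sections, while the Bochner--Kodaira identity on $0$-forms gives $\partial_E^*\partial_E - \bar\partial_E^*\bar\partial_E = \sqrt{-1}\,\Lambda_{\omega_g} F_E$. It might be worth stating this one-line justification explicitly, but no substantive gap is present.
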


The following classical theorems regarding the holonomy groups of irreducible Riemannian manifolds will be needed later.

\begin{theorem} [{\rm See \cite{s2}}]
Let $M$ be an irreducible K\"ahler manifold of dimension $n$, then either $M$ is Hermitian locally symmetric of rank at least 2, or the restricted holonomy group of $M$ is one of the following groups: $\text{\rm (i)}$ $U(n)$; $\text{\rm (ii)}$ $SU(n)$; $\text{\rm (iii)}$ $Sp(n/2)\times U(1)$; $\text{\rm (iv)}$ $Sp(n/2)$.
\end{theorem}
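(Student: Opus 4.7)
The plan is to combine de Rham's decomposition theorem with Berger's algebraic classification of irreducible Riemannian holonomy representations. Since $M$ is K\"ahler and irreducible, the restricted holonomy group $H_0$ acts faithfully and irreducibly on $T_pM \cong \mathbb{C}^n$, and compatibility with the K\"ahler form forces $H_0 \subseteq U(n)$. The question is thus reduced to classifying which closed, connected, irreducible subgroups of $U(n)$ can arise as the restricted holonomy of some K\"ahler manifold.

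The core tool is the Ambrose--Singer theorem, which identifies the Lie algebra $\mathfrak{h}_0$ of $H_0$ with the linear span of all curvature operators $R(X,Y)$ after parallel transport. Coupled with the first Bianchi identity, this constrains the curvature tensor to lie in a distinguished subspace $\mathcal{K}(\mathfrak{h}_0) \subset S^2(\mathfrak{h}_0^*)$ of formal curvature tensors for the representation $\mathfrak{h}_0 \subset \mathfrak{u}(n)$. Berger's dichotomy then applies: either $\mathcal{K}(\mathfrak{h}_0)$ is so small that the covariant derivative $\nabla R$ is forced to vanish---in which case Ambrose's theorem makes $M$ locally symmetric, hence Hermitian locally symmetric by the K\"ahler condition, and of rank $\geq 2$ once the rank-one possibilities $\mathbb{P}^n$ and $\mathbb{B}^n$ (both of holonomy $U(n)$) are absorbed into case (i)---or else $\mathcal{K}(\mathfrak{h}_0)$ is large enough to accommodate a non-parallel curvature tensor, and $H_0$ must lie on a short list.

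Extracting that short list is a representation-theoretic case analysis over irreducible unitary holonomy representations, using the Cartan--Killing classification of compact simple Lie algebras and their irreducible complex representations. For each candidate $\mathfrak{h}_0 \subset \mathfrak{u}(n)$ one computes $\mathcal{K}(\mathfrak{h}_0)$ via Schur's lemma together with the first Bianchi identity, and then tests whether the second (differential) Bianchi identity admits a non-symmetric solution. The surviving cases for K\"ahler holonomy are precisely $U(n)$, $SU(n)$ (the Calabi--Yau case), $Sp(n/2)\times U(1)$, and $Sp(n/2)$ (the hyperk\"ahler case), matching the list in the statement.

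The main obstacle is the representation-theoretic enumeration itself: one must traverse every irreducible complex representation of every simple compact Lie algebra, compute the Bianchi-compatible subspace of formal curvature tensors, and eliminate candidates that cannot support a non-locally-symmetric K\"ahler structure. This is the substantive content of Berger's 1955 paper and cannot be shortcut; the rest of the argument is essentially structural, stringing together the Kählerian reduction, Ambrose--Singer, and the symmetric-space dichotomy.
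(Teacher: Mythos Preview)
The paper does not supply its own proof of this statement: Theorem 4.4 is quoted from Berger's work \cite{s2} (with the formulation taken from Kobayashi \cite{s11}, as the paper's remark notes), and is used as a black box in the subsequent argument. There is therefore no proof in the paper to compare your proposal against.

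That said, your outline is a faithful sketch of the standard route to Berger's classification in the K\"ahler setting. The reduction $H_0\subseteq U(n)$ via parallelism of the K\"ahler form, the invocation of Ambrose--Singer to identify $\mathfrak h_0$ with the span of curvature operators, and the Berger dichotomy between the locally symmetric case and a short list of surviving holonomy groups are all correctly placed. Your remark that the Hermitian rank-one spaces $\mathbb P^n$ and $\mathbb B^n$ already have full $U(n)$ holonomy, and hence fall under case (i), is the right way to account for the ``rank at least 2'' clause in the statement. You also correctly flag that the substantive work---the exhaustive representation-theoretic enumeration over compact simple Lie algebras and the Bianchi-compatibility test---is precisely the content of Berger's 1955 paper and admits no shortcut. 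As a high-level plan your proposal is sound; a complete proof would of course have to carry out that enumeration (or cite it), which is beyond what the present paper attempts.
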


\begin{theorem} [{\rm See \cite{s23}}]
For $\text{\rm (i)}$ $U(n)$; $\text{\rm (ii)}$ $SU(n)$; $\text{\rm (iii)}$ $Sp(n/2)\times U(1)$; $\text{\rm (iv)}$ $Sp(n/2)$, their natural representations on the symmetric tensor product $S^m\mathbb C^n$ are irreducible for every $m$.
\end{theorem}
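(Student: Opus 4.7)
The plan is to identify $S^m\mathbb{C}^n$ in each of the four cases as a single irreducible highest weight representation of the complexified Lie algebra and then invoke standard highest weight theory. In every case the natural action is (up to scalars from a $U(1)$ factor) the defining representation of the corresponding complex reductive group on $\mathbb{C}^n$, and the vector $e_1^m \in S^m\mathbb{C}^n$ is a candidate highest weight vector with weight $m\varepsilon_1$ or $m\omega_1$ in the respective root systems.

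For case (i), $U(n)$: with respect to the standard Cartan of diagonal matrices, $e_1^m$ is killed by every positive root vector $E_{ij}$ ($i<j$), so it generates an irreducible submodule of highest weight $m\varepsilon_1$. To see that this submodule is all of $S^m\mathbb{C}^n$, I would argue directly that any monomial $e_1^{\alpha_1}\cdots e_n^{\alpha_n}$ with $\sum\alpha_i=m$ is obtained from $e_1^m$ by a suitable composition of lowering operators $E_{j1}$. For case (ii), $SU(n)$: since the center $U(1)\subset U(n)$ acts on $S^m\mathbb{C}^n$ by the scalar $e^{im\theta}$, every $SU(n)$-invariant subspace is $U(n)$-invariant, so irreducibility descends from case (i). For case (iii), $Sp(n/2)\times U(1)$: the factor $U(1)$ acts by scalars and thus does not refine the $Sp(n/2)$-isotypic decomposition, reducing (iii) to (iv).

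For case (iv), $Sp(k)$ with $n=2k$: fix a symplectic form $\omega$ and a Cartan subalgebra of $\mathfrak{sp}(k,\mathbb{C})$ for which $e_1$ has weight $\omega_1$. Then $e_1^m$ is a highest weight vector of weight $m\omega_1$. The key point is that, in contrast with exterior powers, the symmetric powers admit no nontrivial $\mathfrak{sp}(k)$-equivariant contraction $S^m\mathbb{C}^{2k}\to S^{m-2}\mathbb{C}^{2k}$ by $\omega$, because $\omega$ is alternating and the obvious such map vanishes on the symmetric part of $\mathbb{C}^{2k\,\otimes m}$. Consequently the irreducible $\mathfrak{sp}(k)$-submodule generated by $e_1^m$ fills out all of $S^m\mathbb{C}^{2k}$.

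The main obstacle is the $Sp$ case: one genuinely has to rule out hidden invariant subspaces. I expect the cleanest way to close this step is by dimension comparison, matching $\dim S^m\mathbb{C}^{2k}=\binom{2k+m-1}{m}$ against the dimension of the irreducible $\mathfrak{sp}(k)$-module of highest weight $m\omega_1$ given by Weyl's dimension formula; alternatively, one can explicitly verify that iterating the negative root vectors of $\mathfrak{sp}(k)$ on $e_1^m$ produces every monomial of degree $m$ in $e_1,\ldots,e_{2k}$.
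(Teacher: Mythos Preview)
The paper does not give its own proof of this statement: Theorem~4.5 is quoted as a classical fact with a bare citation to Weyl's \emph{The Classical Groups}, so there is no argument in the paper to compare against. Your highest-weight sketch is therefore not a different route from the paper's proof; it is simply a proof where the paper offers none.

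On the substance, cases (i)--(iii) are fine: the $U(n)$ argument via the cyclic vector $e_1^m$ and the lowering operators $E_{j1}$ is correct, and the reductions (ii)$\Rightarrow$(i) and (iii)$\Rightarrow$(iv) by stripping off a central $U(1)$ are clean. In case~(iv) your sentence ``Consequently the irreducible $\mathfrak{sp}(k)$-submodule generated by $e_1^m$ fills out all of $S^m\mathbb{C}^{2k}$'' does not follow from the preceding observation about the vanishing of the symplectic contraction: the absence of an equivariant map $S^m\to S^{m-2}$ rules out one particular candidate complement, not all proper invariant subspaces. You correctly flag this as the gap. Both of your proposed remedies work. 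For the Weyl formula, with $\lambda=m\varepsilon_1$ only the positive roots involving $\varepsilon_1$ contribute nontrivially, and the product telescopes to $\binom{2k+m-1}{m}=\dim S^m\mathbb{C}^{2k}$. For the direct argument, the $\mathfrak{gl}(k)$ block already produces every degree-$m$ monomial in $e_1,\dots,e_k$ from $e_1^m$, and then the pairwise commuting long-root lowering operators $E_{k+i,\,i}$ (root $-2\varepsilon_i$) convert any prescribed number of $e_i$'s into $e_{k+i}$'s, so every monomial in $e_1,\dots,e_{2k}$ is reached. Either completion makes your sketch into a full proof.
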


\vspace{1.2mm}
\noindent
{\bf Remark.} \ Theorems 4.4 and 4.5 in the present forms are taken from~\cite{s11}.

Combining Theorem 4.4 and Theorem 4.5, we have

\begin{theorem}
Let $M$ be an irreducible K\"ahler manifold. If there exists a non-trivial proper subspace in $S^2T(M)$ which is holonomy-invariant, then $M$ is Hermitian locally symmetric of rank at least 2.
\end{theorem}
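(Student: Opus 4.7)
The proof should be a quick dichotomy argument via Berger's classification, so my plan is to reduce the statement to direct application of Theorems 4.4 and 4.5.

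The plan is to argue by contradiction, or more naturally by dichotomy. Suppose $W \subset S^2T(M)$ is a non-trivial proper holonomy-invariant subbundle, and fix a base point $p \in M$. Since holonomy preserves $W$, the fiber $W_p$ is a proper non-zero subspace of $S^2T_p(M)$ invariant under the action of the full holonomy group $\mathrm{Hol}(M)$ at $p$, and a fortiori under the restricted holonomy group $\mathrm{Hol}^0(M)$, since the latter is contained in the former. Apply Theorem 4.4 to the irreducible K\"ahler manifold $M$: either $M$ is already Hermitian locally symmetric of rank at least $2$, in which case we are done, or $\mathrm{Hol}^0(M)$ is one of the four groups $U(n)$, $SU(n)$, $Sp(n/2) \times U(1)$, $Sp(n/2)$ acting on $T_p(M) \cong \mathbb{C}^n$ via the standard representation.

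In the second alternative, Theorem 4.5 states that the induced representation of each of these groups on $S^m\mathbb{C}^n$ is irreducible for every $m \ge 1$; in particular, for $m = 2$, the action of $\mathrm{Hol}^0(M)$ on $S^2T_p(M)$ is irreducible. But this contradicts the existence of the proper non-zero invariant subspace $W_p \subset S^2T_p(M)$. Hence the second alternative cannot occur, and $M$ must be Hermitian locally symmetric of rank at least $2$, proving the theorem.

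I do not anticipate any substantive obstacle: the two hypothesis theorems are being cited as black boxes, and the only mild subtlety is to note that holonomy-invariance (which in the statement is not specified as ``restricted'' or ``full'') descends to restricted-holonomy-invariance of the fiber at a point, because $\mathrm{Hol}^0(M) \subset \mathrm{Hol}(M)$. This is essentially a one-paragraph proof once the classification results are in hand, and it would be appropriate to present it as such in the paper.
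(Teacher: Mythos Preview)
Your proposal is correct and matches the paper's approach exactly: the paper presents Theorem 4.6 as an immediate consequence of Theorems 4.4 and 4.5 (``Combining Theorem 4.4 and Theorem 4.5, we have''), without writing out further details. Your observation about passing from full to restricted holonomy is a minor clarification the paper leaves implicit.
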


\setcounter{lemma}{0}
\begin{lemma}
Let $X=\Omega/\Gamma$, where $\text{\rm rank}(\Omega)\geq 2$. Let $\rho$ be the the degree of the strong non-degeneracy of the bisectional curvature of $X$ {\rm (Definition 2.4)}. If $S\subset X$ is a compact complex submanifold such that $\text{\rm dim}(S)>\rho$, then $S$ is irreducible $\text{\rm (}$in the sense of de Rham$\text{\rm )}$ with respect to any K\"ahler metric on $S$.
\end{lemma}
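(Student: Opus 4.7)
The approach is proof by contradiction. Suppose some K\"ahler metric $g'$ on $S$ makes $(S, g')$ de Rham reducible. The $g'$-parallel holonomy decomposition of $T(S)$, being $J$-invariant because $g'$ is K\"ahler, yields a non-trivial holomorphic splitting $T(S) = T_1 \oplus T_2$ into holomorphic subbundles. The associated projection $P:T(S) \to T(S)$ onto $T_1$ along $T_2$ is then a holomorphic $(1,1)$-tensor, $P \in H^0(S, T^1_1(S))$.

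By Proposition 2.1, $S$ admits a K\"ahler-Einstein metric $h$. Applying Bochner's theorem (Theorem 4.3) to $(S, h)$, $P$ is parallel with respect to $h$, so $T_1 = P(T(S))$ and $T_2 = \ker P$ are $h$-parallel holomorphic subbundles of $T(S)$. Replacing $T_2$ by $T_1^{\perp_h}$, which is again $h$-parallel and holomorphic (a parallel subbundle in a K\"ahler manifold being $J$-invariant), we may further assume the decomposition $T(S) = T_1 \oplus T_2$ is $h$-orthogonal, so that $(S,h)$ is itself de Rham reducible with this decomposition.

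It then suffices to derive a contradiction by showing $(T_1|_p, T_2|_p) \in {\cal Z}_p$ at every $p \in S$: since $T_1 \cap T_2 = \{0\}$, we would have $\dim T_1|_p + \dim T_2|_p = \dim(S) > \rho$, contradicting the definition of $\rho$ as the maximum of $\dim A + \dim B$ over pairs $(A,B) \in {\cal Z}_p$. The key claim is therefore that the curvature tensor $R$ of $(X,g)$ satisfies $R_{\xi\bar\xi\eta\bar\eta} = 0$ for all $\xi \in T_1|_p$ and $\eta \in T_2|_p$.

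By the Gauss equation for the inclusion $(S, g|_S) \hookrightarrow (X,g)$,
$$R^S_{\xi\bar\xi\eta\bar\eta} = R_{\xi\bar\xi\eta\bar\eta} - \|\sigma(\xi,\eta)\|^2 \le 0,$$
where the inequality uses the nonpositivity of the holomorphic bisectional curvature of $\Omega$. Since the two terms on the right have opposite signs, the claim reduces to establishing simultaneously $R^S_{\xi\bar\xi\eta\bar\eta} = 0$ and $\sigma(\xi,\eta) = 0$ for $\xi \in T_1$, $\eta \in T_2$. The main obstacle is bridging the $h$-parallelness of the decomposition (provided by Bochner) with the $g|_S$-geometric data needed to apply the Gauss equation. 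The plan is to close this gap by an integration argument paralleling the proof of Theorem 2.1: compare the Ricci form of $(S, g|_S)$ with the first Chern forms of $T_1$ and $T_2$ computed both as holomorphic subbundles and as holomorphic quotients of $T(S)$, using Griffiths' subbundle and quotient curvature formulas, and pair with $\omega_g^{\dim(S)-1}$ on the compact $S$. The resulting integrated inequalities must hold with pointwise equality, forcing both $\sigma(\xi,\eta) = 0$ and the vanishing of the mixed $g|_S$-bisectional curvature, which completes the argument.
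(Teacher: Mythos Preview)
Your setup through step 8 is fine, and the use of Bochner to carry the de Rham splitting over to the K\"ahler--Einstein metric $h$ is a pleasant alternative to the paper's passage to a finite cover.  The gap is in the final ``plan''.  Pairing first Chern forms of $T_1,T_2$ with $\omega_g^{\dim S-1}$ yields only a single scalar identity, not the pointwise vanishing $R_{\xi\bar\xi\eta\bar\eta}=0$ for \emph{all} $\xi\in T_1|_p$, $\eta\in T_2|_p$ that you are aiming for.  Moreover, the subbundle/quotient comparison you propose for $T_1\subset T(S)$ versus $T(S)/T_2$ does not close up: the quotient metric on $T(S)/T_2$ (from $g|_S$) is carried by $T_2^{\perp_{g|_S}}$, not by $T_1$, so the ``$R^{S,g}$'' terms in the two Griffiths formulas are evaluated on different vectors and do not cancel.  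In Theorem~2.1 that cancellation was replaced by an independent bound (Lemma~3.1), for which there is no analogue here.

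The paper's argument supplies the missing idea: use a \emph{higher} power of a Chern form.  Since $T(S)=T_1\oplus T_2$ is $h$-parallel and $h$-orthogonal, $c_1(T_2,h)$ has $T_1$ in its kernel, hence $c_1(T_2,h)^{f+1}\equiv 0$ where $f=\operatorname{rank}T_2$.  Cohomological invariance then gives $\int_S c_1(T_2,g|_S)^{f+1}\wedge\omega_h^{e-1}=0$, and because $(S,g|_S)$ has nonpositive bisectional curvature the form $c_1(T_2,g|_S)$ (subbundle metric) is seminegative, so the integrand has a sign and must vanish pointwise.  This forces $c_1(T_2,g|_S)$ to have a kernel $K_p$ of dimension $\ge e$ at every $p$; for $\mu\in K_p$ and $\nu\in T_2|_p$ one then reads off $R^{S,g}_{\mu\bar\mu\nu\bar\nu}=0$, and the Gauss equation gives $R_{\mu\bar\mu\nu\bar\nu}=0$.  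Note the conclusion is $(K_p,T_2|_p)\in\mathcal Z_p$, not $(T_1|_p,T_2|_p)\in\mathcal Z_p$ as you set out to prove; the stronger statement is neither needed nor established.
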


\vspace{1.2mm}
\noindent
\begin{proof}
Let $h$ be a K\"ahler metric on $S$ such that $S$ is reducible.  Then after replacing $S$ by a finite unbranched cover if necessary, we can write $T(S)=E\oplus F$, where
$E$ and $F$ are parallel vector subbundles of $T(S)$, such that $e:=\text{\rm rank}(E)>0$ and $f:=\text{\rm rank}(F)>0$. Consider the curvature $(1,1)$-form $c_1(F, h)$ for $(F, h|_F)$ representing the first Chern class of $F$. Then its kernel contains $E$ and hence $c_1(F, h)^{f+1}\equiv 0$ on $S$. On the other hand, on $S$ we have the restriction of a canonical metric $g$ of $X$ and if we denote the curvature $(1,1)$-form for $(F, g|_F)$ by $c_1(F, g)$, which is cohomologous to $c_1(F, h)$, then
$$
    0=\int_S c_1(F, h)^{f+1}\wedge \omega_h^{e-1} = \int_S c_1(F, g)^{f+1}\wedge \omega_h^{e-1},
$$
where $\omega_h$ is the K\"ahler form of $h$. But $c_1(F, g)$ is seminegative as $g$ is of nonpositive holomorphic bisectional curvature. Thus, from the above integral formula we get $c_1(F, g)^{f+1}\equiv 0$ and hence at every point $p\in S$, $c_1(F, g)$ has a kernel $K_p\subset T_p(S)$ of dimension at least $e$. Now let $R^{S,g}$ be the curvature tensor of $(S,g|_S)$, then for $\mu\in K_p$, and $\nu\in F_p$, both of unit length, we have
$$
    0\geq R^{S,g}_{\mu\bar\mu\nu\bar\nu}\geq c_1(F, g)\left(\frac{1}{\sqrt{-1}}\,\mu\wedge\bar\mu\right)=0.
$$
Therefore, $R^{S,g}_{\mu\bar\mu\nu\bar\nu}=0$ and we deduce that
$$\rho\geq\dim(K_p)+\dim(F_p)\geq e+f=\dim (S).$$
\end{proof}

\vspace{1.2mm}
\noindent
{\bf Remark.} \ The degree of strong non-degeneracy of the bisectional curvature $\rho$ is defined for every K\"ahler manifold (for non-homogeneous cases, it varies from point to point). In the cases for $\Omega/\Gamma$, where $\Omega$ is irreducible and rank$(\Omega)\geq 2$, $\rho$ is calculated for all cases~\cite{s20} for classical cases and~\cite{s26} for exceptional cases):

$\,$

$D^I_{m,n}: \rho=(m-1)(n-1)+1$\,\,\,\,\,\,\,\,\,\,\,\,\,$D^{II}_{n}: \rho=\dfrac{(n-2)(n-3)}{2}+1$

$D^{III}_{n}: \rho=\dfrac{n(n-1)}{2}+1$ \,\,\,\,\,\,\,\,\,\,\,\,\,\,\,\,\,\,\,\,\,\,\,\,\,\,\, $D^{IV}_{n}: \rho=2$

$D^{V}: \rho=6$ \,\,\,\,\,\,\,\,\,\,\,\,\,\,\,\,\,\,\,\,\,\,\,\,\,\,\, \,\,\,\,\,\,\,\,\,\,\,\,\,\,\,\,\,\,\,\,\,\,\,\,\,\,\,\,\,\,\,\,\,\,$D^{VI}: \rho=11$.

$\,$

Here we note that $\rho$ is also the maximal dimension of the reducible (in the sense of de Rham) K\"ahler submanifolds of $\Omega/\Gamma$. On the one hand, if we have a reducible K\"ahler submanifold $S=M\times N\subset\Omega/\Gamma$, then for any $p\in M\times N$ and every $\mu\in T_p(M)$, $\nu\in T_p(N)$, we have $0=R^S_{\mu\bar\mu\nu\bar\nu}\leq R^{\Omega/\Gamma}_{\mu\bar\mu\nu\bar\nu}\leq 0$ $\Rightarrow$ $R^{\Omega/\Gamma}_{\mu\bar\mu\nu\bar\nu}=0$. Thus, $\dim(S)=\dim(M)+\dim(N)\leq\rho$. On the other hand, from the above table, one knows that there exists a reducible totally geodesic locally symmetric submanifold of dimension equal to $\rho$. They are listed as follows:

$\,$

$D^I_{m,n}: \mathbb B^1\times D^I_{m-1,n-1}$\,\,\,\,\,\,\,\,\,\,\,\,\,$D^{II}_{n}: \mathbb B^1\times D^{II}_{n-2}$

$D^{III}_{n}: \mathbb B^1\times D^{III}_{n-1}$ \,\,\,\,\,\,\,\,\,\,\,\,\,\,\,\,\,\,\,\,\,\,\,\, $D^{IV}_{n}: \mathbb B^1\times\mathbb B^1$

$D^{V}: \mathbb B^1\times\mathbb B^5$ \,\,\,\,\,\,\,\,\,\,\,\,\,\,\,\,\,\,\,\,\,\,\,\,\,\,\, \,\,\,\,\,\,\,$D^{VI}: \mathbb B^1\times D^{IV}_{10}$,

$\,$

\noindent 
where $\mathbb B^m$ is the $m$-dimensional complex unit ball.

$\,$

We are now ready to prove Theorem 2.2 and Theorem 2.3, as follows.

\vspace{1.2mm}
\noindent
\begin{proof}[Proof of Theorem 2.2.]
The assumption $\text{\rm rank}(S^2T(S))>\max\{\text{\rm rank}(A),\text{\rm rank}(B)\}$ implies that neither $p_A\circ\sigma:S^2T(S)\rightarrow A|_S$ nor $p_B\circ\sigma:S^2T(S)\rightarrow B|_S$ is injective. In particular, both $E_A=\pi_\sigma\circ p_A\circ\sigma$ and $E_B=\pi_\sigma\circ p_B\circ\sigma$ must have a non-trivial kernel at every point on $S$. Now from Proposition 4.1, $E_A$, $E_B$ are parallel with respect to some K\"ahler-Einstein metric $h$ on $S$. Hence, their kernels are invariant under the parallel transport with respect to the Riemannian connection $\nabla_h$ of $h$. If $E_A$ is not identically equal to the zero endomorphism, then at any point $p\in S$, we would get a non-trivial proper subspace of $S^2T_p(S)$ which is holonomy-invariant with respect to $\nabla_h$. Then, Lemma 4.1 together with Theorem 4.6 say that $S$ is Hermitian locally symmetric of rank at least 2. If $E_A$ is the zero endomorphism, then we can apply the same argument to $E_B$, which cannot be zero since $E_A+E_B=\text{\rm id}_{S^2T(S)}$. Thus, $S$ must be Hermitian locally symmetric of rank at least 2.

Since $X$ is Hermitian locally symmetric of non-compact type, we see that $S$ is also of non-compact type and the total geodesy of $S$ follows from the theorem below.
\end{proof}

\begin{theorem} [{\rm See [14, p.14]}]
Let $X$ be a Hermitian locally symmetric manifold of non-compact type and $S$ be a Hermitian locally symmetric manifold of finite volume uniformized by an irreducible bounded symmetric domain of rank at least 2. Then, any non-constant holomorphic mapping from $S$ to $X$ is necessarily a totally geodesic isometric immersion up to a normalizing constant with respect to the canonical K\"ahler-Einstein metrics on $X$.
\end{theorem}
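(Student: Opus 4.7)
The strategy is to lift to universal covers: write $\tilde{S} = \Omega_S$ for the irreducible bounded symmetric domain of rank $\ge 2$ uniformizing $S$, and $\tilde{X}$ for the universal cover of $X$, a bounded symmetric domain of non-compact type (possibly reducible). By $\pi_1$-equivariance, $f: S \to X$ lifts to a holomorphic map $\tilde{f}: \Omega_S \to \tilde{X}$. I would prove the theorem in two phases: first, show that $\tilde{f}$ is a holomorphic isometric immersion up to a positive constant factor; second, upgrade this to total geodesy using the rank-at-least-$2$ hypothesis on $\Omega_S$.

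For the first phase, let $g_S$ and $g_X$ denote the canonical K\"ahler-Einstein metrics. I would apply Yau's version of the Ahlfors-Schwarz lemma: since $(\Omega_S, g_S)$ has holomorphic sectional curvature bounded above by a strictly negative constant and $(\tilde{X}, g_X)$ has seminegative holomorphic bisectional curvature, the pull-back $\tilde{f}^* g_X$ is pointwise dominated by a constant multiple of $g_S$. Consider the Hermitian endomorphism $\Phi$ of $T(S)$ defined by $f^* g_X = g_S(\Phi\cdot,\cdot)$ together with its trace $\tau := \text{\rm tr}_{g_S}(f^* g_X)$, a bounded smooth function on $S$. A Bochner-type computation exploiting the K\"ahler-Einstein identity $\text{\rm Ric}(g_S) = -c\, g_S$ and the seminegativity of bisectional curvature of $\tilde{X}$ yields a differential inequality from which the finite-volume assumption on $S$, combined with integration against $\omega_{g_S}^{\dim S - 1}$, forces $\tau$ (and in fact all eigenvalues of $\Phi$) to be constant. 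This gives $\tilde{f}^* g_X = \lambda\, g_S$ pointwise for some $\lambda > 0$, so $\tilde{f}$ is an isometric immersion after rescaling.

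For the second phase, once $\tilde{f}$ is isometric, the Gauss equation reads $R^{\Omega_S}_{\xi\ol{\xi}\eta\ol{\eta}} = R^{\tilde{X}}_{d\tilde{f}(\xi)\ol{d\tilde{f}(\xi)}d\tilde{f}(\eta)\ol{d\tilde{f}(\eta)}} - \|\sigma(\xi,\eta)\|^2$, where $\sigma$ denotes the second fundamental form of $\tilde{f}$. Since $\text{\rm rank}(\Omega_S) \ge 2$, the Calabi-Vesentini-Borel parallel decomposition $S^2 T(\Omega_S) = A \oplus B$ (Theorems 4.1 and 4.2) is available. The pulled-back curvature $\tilde{f}^* R^{\tilde{X}}$ defines a holomorphic endomorphism of $S^2 T(\Omega_S)$ which, by Bochner's principle (Theorem 4.4) applied downstairs on $S$, is parallel. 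Comparing this parallel endomorphism with $R_\sigma^{\Omega_S}$ and exploiting the reductive $G$-structure of the rank-$\ge 2$ domain $\Omega_S$---which makes the holonomy act irreducibly on each eigenbundle summand---one concludes that the contribution of $\sigma$ to the Gauss equation must itself be a parallel section of the relevant tensor bundle, and the only such section compatible with the curvature identity is zero. Hence $\sigma \equiv 0$ and $\tilde{f}$ is totally geodesic.

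The main obstacle I expect lies in phase one: rigorously extracting constancy of $\tau$ (and of the eigenvalues of $\Phi$) from the Schwarz-type differential inequality when $S$ is only of finite volume rather than compact. The rank-$\ge 2$ hypothesis enters crucially both here and in phase two, since in the rank-one case (complex hyperbolic space forms) there exist non-totally-geodesic holomorphic maps---for instance graphs of surjective non-covering holomorphic maps between compact ball quotients, as recalled in the introduction---showing that some form of higher-rank rigidity is unavoidable.
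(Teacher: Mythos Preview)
The paper does not supply a proof of this theorem; it is quoted from reference~[14] (Mok's monograph on metric rigidity) and used as a black box to finish Theorem~2.2. So there is no ``paper's own proof'' to compare against, only the known argument from~[14].

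That argument proceeds via Hermitian metric rigidity, not via the Schwarz--Bochner scheme you outline. Concretely: since $(\tilde X,g_X)$ has nonpositive bisectional curvature, the pulled-back form $f^*g_X$ is a (possibly degenerate) Hermitian metric on $T(S)$ of seminegative curvature in the sense of Griffiths. Because $S$ is a finite-volume quotient of an irreducible bounded symmetric domain of rank $\ge 2$, the Hermitian Metric Rigidity Theorem (cf.\ Proposition~6.1 and~[13],~[14] in the present paper) forces $f^*g_X = c\,g_S$ for a constant $c>0$. This already gives that $f$ is an isometric immersion up to scaling. Total geodesy then comes from the equality case of the integral formula underlying metric rigidity: the zeroes of bisectional curvature on $(S,g_S)$ must be mapped to zeroes of bisectional curvature on $(X,g_X)$, and the structure of characteristic vectors on a rank-$\ge 2$ domain forces $\sigma \equiv 0$.

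Your Phase~1 has a genuine gap. The Chern--Lu/Schwarz inequality together with the K\"ahler--Einstein condition yields a differential inequality for $\tau = \mathrm{tr}_{g_S}(f^*g_X)$, but that inequality is insensitive to the rank of $\Omega_S$: it holds equally well when $\Omega_S = \mathbb B^n$. Yet, as you yourself note, for ball quotients there exist non-constant holomorphic maps $S \to X$ that are neither isometric nor totally geodesic. Hence no argument that extracts ``$\tau$ constant with all eigenvalues of $\Phi$ constant'' purely from the Schwarz inequality plus finite volume can be correct---it would prove too much. The rank-$\ge 2$ hypothesis must enter Phase~1 in an essential way, and the mechanism by which it does so is precisely metric rigidity, not a Bochner identity for $\tau$.

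Your Phase~2 is also too vague as written: the assertion that ``the contribution of $\sigma$ to the Gauss equation must itself be a parallel section'' is not justified, and $\|\sigma\|^2$-type terms are not a priori holomorphic or parallel. The actual route to $\sigma\equiv 0$ again goes through the rigidity integral formula and the analysis of characteristic directions, not through Bochner's theorem on holomorphic tensor fields.
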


For every $X=\Omega/\Gamma$, with $\text{\rm rank}(\Omega)\geq 2$ and $\Omega$ irreducible, Theorem 2.2 gives an explicit upper bound of the dimension of any compact splitting complex submanifold of $X$ which fails to be totally geodesic. It turns out that, by combining the eigenbundle decomposition for $R_\tau$ on $T^1_1(X)$, we can obtain another upper bound which is much sharper in some cases.

\begin{lemma}
Let $M$ be a complex manifold and $x\in M$. Let $Q=(Q^{\alpha\beta}_{\mu\nu})\in {T^2_2}_{,x}(M)$ satisfying $Q^{\alpha\beta}_{\mu\nu}=Q^{\beta\alpha}_{\mu\nu}=Q^{\alpha\beta}_{\nu\mu}$. Regard $Q$ both as an element $Q_\odot\in\text{\rm End}(S^2T_x(M))$ and an element $Q_\star\in \text{\rm End}({T^1_1}_{,x}(M))$. If $Q_\odot=\lambda I_\odot$ for some non-zero $\lambda\in \mathbb C$, where $I_\odot$ is the identity map, then $\text{\rm Ker}(Q_\star)=\{0\}$.
\end{lemma}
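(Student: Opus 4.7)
The statement is purely linear-algebraic: it should follow by pinning down the component form of $Q$ from the scalar action on $S^2T_x(M)$ and then reading off what that forces $Q_\star$ to be on $T^1_{1,x}(M)=\mathrm{End}(T_x(M))$. The plan is in three steps. First, turn $Q_\odot=\lambda I_\odot$ into an explicit formula for $Q^{\alpha\beta}_{\gamma\delta}$. Second, substitute that formula into the natural contraction defining $Q_\star$ to obtain an explicit endomorphism of $\mathrm{End}(T_x(M))$. Third, deduce the injectivity of $Q_\star$ from a single trace identity.

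For step one, I fix a basis $\{e_1,\dots,e_n\}$ of $T_x(M)$, $n=\dim M$, and test $Q_\odot=\lambda I_\odot$ against the symmetric tensors $e_p\odot e_q$; combined with the given symmetries $Q^{\alpha\beta}_{\gamma\delta}=Q^{\beta\alpha}_{\gamma\delta}=Q^{\alpha\beta}_{\delta\gamma}$, this forces
$$
Q^{\alpha\beta}_{\gamma\delta} \;=\; \tfrac{\lambda}{2}\bigl(\delta^\alpha_\gamma\delta^\beta_\delta+\delta^\alpha_\delta\delta^\beta_\gamma\bigr).
$$
For step two, defining $Q_\star$ by the natural contraction $(Q_\star\phi)^\alpha_\delta := Q^{\alpha\beta}_{\gamma\delta}\phi^\gamma_\beta$ and plugging in the formula just obtained yields
$$
Q_\star\phi \;=\; \tfrac{\lambda}{2}\bigl(\phi+\mathrm{tr}(\phi)\,I_{T_x(M)}\bigr).
$$
For step three, if $Q_\star\phi=0$ then (since $\lambda\neq 0$) $\phi=-\mathrm{tr}(\phi)\,I_{T_x(M)}$; taking the trace of both sides gives $(n+1)\,\mathrm{tr}(\phi)=0$, so $\mathrm{tr}(\phi)=0$ and therefore $\phi=0$.

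The only point requiring genuine care --- and the nearest thing to a main obstacle --- is the bookkeeping that guarantees $Q_\odot$ and $Q_\star$ refer to the same underlying tensor $Q$ in compatible ways. Because $Q^{\alpha\beta}_{\gamma\delta}$ is symmetric in its two upper and in its two lower indices, any consistent choice of which upper index of $Q$ is contracted with the lower index of $\phi$ (and symmetrically for the lower) produces the same endomorphism of $T^1_{1,x}(M)$; flagging this at the outset removes all ambiguity, and the rest of the argument is direct. Nothing about $\Omega$, $\Gamma$ or the curvature of $X$ is used, matching the fact that the lemma is formulated pointwise and is clearly intended for later application to restrictions of the curvature endomorphism on eigenbundles on which $R_\sigma$ acts by a nonzero scalar.
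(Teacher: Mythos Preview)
Your proposal is correct and follows essentially the same route as the paper's own proof: derive $Q^{\alpha\beta}_{\gamma\delta}=\tfrac{\lambda}{2}(\delta^\alpha_\gamma\delta^\beta_\delta+\delta^\alpha_\delta\delta^\beta_\gamma)$ from $Q_\odot=\lambda I_\odot$, compute $Q_\star\phi=\tfrac{\lambda}{2}(\phi+\mathrm{tr}(\phi)\,I)$, and conclude via the trace identity $(n+1)\,\mathrm{tr}(\phi)=0$. The paper also remarks that $Q$ is, up to scale, the curvature tensor of the complex hyperbolic space form, so the conclusion amounts to strict dual-Nakano negativity of $(\mathbb{B}^n,g_{\mathbb{B}^n})$ --- but the actual argument given there is the same linear-algebraic one you wrote.
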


\vspace{1.2mm}
\noindent
\begin{proof}
If $Q_\odot=\lambda I_\odot$, then
$$
    Q^{\alpha\beta}_{\mu\nu}=\frac{\lambda}{2}\left( \delta^\alpha_\mu\delta^\beta_\nu+\delta^\beta_\mu\delta^\alpha_\nu\right),
$$
where $\delta^\alpha_\mu$ is the Kronecker delta. Now let $T=(T^\mu_\alpha)\in {T^1_1}_{,x}(M)$, then
$$
    Q_\star (T) = \left(\sum_{\alpha,\mu} Q^{\alpha\beta}_{\mu\nu}T^\mu_\alpha\right)
    = \left(\frac{\lambda}{2}\sum_{\alpha,\mu} \left( \delta^\alpha_\mu\delta^\beta_\nu+\delta^\beta_\mu\delta^\alpha_\nu\right)T^\mu_\alpha\right)
    = \frac{\lambda}{2} \left(I\sum_\mu T^\mu_\mu  + T\right),
$$
where $I=(\delta^\beta_\nu)\in{T^1_1}_{,x}(M)$.

Now suppose $Q_\star(T)=0$. Since $\lambda\neq 0$, it follows that $T=-I\sum_\mu T^\mu_\mu$ and after taking trace, we get
$$
    \sum_\mu T^\mu_\mu = -\text{\rm dim}(M)\sum_\mu T^\mu_\mu.
$$
Thus, $\sum_\mu T^\mu_\mu = 0$ and hence $T=0$.
\end{proof}

\vspace{1.2mm}
\noindent
{\bf Remark.} \ An equivalent way of formulating the proof goes as follows.  $Q_{\mu\nu}^{\alpha\beta}$ is a curvature-like tensor.  The hypothesis $Q_\odot = \lambda I_\odot$ says that $Q$ agrees up to a non-zero multiplicative constant with the curvature tensor of the complex hyperbolic space form $(\mathbb B^n,g_{\mathbb B^n})$, and the conclusion Ker$(Q_\star) = 0$
follows from the fact that $(\mathbb B^n,g_{\mathbb B^n})$ is of strictly negative curvature in the dual sense of Nakano.

\vspace{1.2mm}
\noindent
\begin{proof}[Proof of Theorem 2.3]
We first define the endomorphism $R^S_\sigma:S^2T(S)\rightarrow S^2T(S)$, by composing the following mappings
$$
    S^2T(S)\overset{\sigma}{\longrightarrow} S^2T(X)\overset{R_\sigma}{\longrightarrow}S^2T(X)\overset{\pi_\sigma}{\longrightarrow} S^2T(S).
$$
Thus, $R^S_\sigma:=\pi_\sigma\circ R_\sigma\circ \sigma$.

Since $S^2T(X)=A\oplus B$ is the eigenbundle decomposition for $R_\sigma$, we have
$$R_\sigma=\lambda_Ap_A+\lambda_Bp_B,$$
where $p_A:S^2T(X)\rightarrow A$, $p_B:S^2T(X)\rightarrow B$ are the projections and $\lambda_A\neq 0$, $\lambda_B\neq 0$ are the two eigenvalues of $R_\sigma$. Hence, we get
$$
    R^S_\sigma=\lambda_AE_A +\lambda_BE_B.
$$

We now proceed with the proof. We may assume that $\text{\rm rank}(A)\leq \text{\rm rank}(B)$. Similar to the proof of Theorem 2.2, the hypothesis $\text{\rm rank}(S^2T(S))>\text{\rm rank}(A)$ implies that the endomorphism $E_A:S^2T(S)\rightarrow S^2T(S)$ must have a non-trivial kernel at every point on $S$. If $E_A$ is not identically equal to zero, then the same argument as in the proof of Theorem 2.2 shows that $S$ is Hermitian locally symmetric of rank at least 2 and totally geodesic.

\vskip 0.2cm
Now suppose $E_A\equiv 0$. Since $E_A+E_B=\text{\rm id}_{S^2T(S)}$, we have $E_B=\text{\rm id}_{S^2T(S)}$ and thus $$R^S_\sigma=\lambda_AE_A +\lambda_BE_B=\lambda_B\text{\rm id}_{S^2T(S)}.$$  By Lemma 4.2, if we regard $R^S_\sigma$ as an endomorphism $R^S_\star\in\text{\rm End}(T^1_1(S))$, then its kernel is  trivial at every point on $S$.

On the other hand, the curvature $(2,2)$-tensor on $X$ defines another endomorphism $R_\tau:T^1_1(X)\rightarrow T^1_1(X)$. Recall that by Borel~(Theorem 4.2), there is a decomposition $T^1_1(X)=C\oplus D$, where $C$ is the kernel of $R_\tau$ and $\text{\rm rank}(D)=\text{\rm dim}_{\mathbb R}\frak k$. Hence, if we define $R^S_\tau:T^1_1(S)\rightarrow T^1_1(S)$ by composing
$$
    T^1_1(S)\overset{\tau}{\longrightarrow} T^1_1(X)|_S \overset{R_\tau}{\longrightarrow} T^1_1(X)|_S \overset{\pi_\tau}{\longrightarrow} T^1_1(S),
$$
i.e. $R^S_\tau:=\pi_\tau\circ R_\tau\circ\tau$, then the hypothesis $\text{\rm dim}(S)^2 > \text{\rm dim}_{\mathbb R}\frak k$, which is equivalent to $\text{\rm rank}(T^1_1(S))> \text{\rm rank}(D)$, implies that $\tau(T^1_1(S))$ intersects $C|_S$ at every point on $S$ and thus $R^S_\tau$ has a non-trivial kernel. Finally, it is clear that $R^S_\tau$ and $R^S_\sigma$ are given by the same $(2,2)$-tensor on $S$ and it follows that $R^S_\tau = R^S_\star$. We have thus arrived at a contradiction and the proof of the theorem is complete.
\end{proof}

\section{Splitting complex submanifolds of quotients of irreducible bounded symmetric domains of type I and type IV}

In~\cite{s5} the multiplicities of the eigenvalues of $R_\sigma$ on $S^2T(X)$, where $X=\Omega/\Gamma$ for all classical types of irreducible bounded symmetric domains $\Omega$, are explicitly calculated.  We now use their results to specialize Theorem 2.3 to type-I and type-IV domains. We first recall the definitions of these domains.

\vspace{1.2mm}
\noindent
{\bf Type-I irreducible bounded symmetric domains $D^I_{m,n}$.}
{\it Let $M(m,n;\mathbb C)$ be the set of $m$-by-$n$ complex matrices. If we identify $M(m,n;\mathbb C)$ with $\mathbb C^{m\times n}$, then
$$
    D^I_{m,n}=\{Z\in M(m,n;\mathbb C): I_m-Z\bar Z^t>0\}\Subset\mathbb C^{m\times n}.
$$
}

\vspace{1.2mm}
\noindent
{\bf Type-IV irreducible bounded symmetric domains $D^{IV}_n$.}
{\it Let $n\geq 3$ be a positive integer. Then,
$$
    D^{IV}_n=\left\{(z_1,\ldots,z_n)\in\mathbb C^n: \sum^n_{k=1} |z_k|^2 < 2 \text{\rm\,\,\,\,\, and \,\,\,\,\,} \sum^n_{k=1} |z_k|^2 < 1 + \left|\frac{1}{2}\sum^n_{k=1} z_k^2\right|^2\right\}\Subset\mathbb C^n.
$$
}

\begin{theorem}
Let $p \ge 2$ be an integer, and $\, \Gamma \subset \text{\rm Aut}(D^I_{2,p})$ be a torsion-free discrete subgroup. Write $X = D^{I}_{2,p}/\Gamma$, and let $S\subset X$ be a compact splitting complex submanifold. If $\text{\rm dim}(S)\geq p+1$, then  $S$ is Hermitian locally symmetric of rank at least 2 and $S\subset X$ is totally geodesic with respect to the canonical K\"ahler-Einstein metrics on $X$. Furthermore, if $p\geq 3$, then $S$ is uniformized by $D^I_{2,q}$, for some $q \leq p$.
\end{theorem}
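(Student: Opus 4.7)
The plan is to apply Theorem 2.3 with $\Omega = D^I_{2,p}$, reducing the first assertion to verifying the two numerical hypotheses via standard invariants of the type-I domain, and then for $p\ge 3$ to identify the universal cover of $S$ by classifying irreducible rank-$2$ Hermitian symmetric totally geodesic subdomains of $D^I_{2,p}$ of dimension at least $p+1$.

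First I would read off the Calabi--Vesentini decomposition: under the isotropy $S(U(2)\times U(p))$, the fibre $S^2T_0(X) = S^2(\mathbb C^2\otimes \mathbb C^p)$ splits as $\bigl(S^2\mathbb C^2\otimes S^2\mathbb C^p\bigr)\oplus\bigl(\Lambda^2\mathbb C^2\otimes \Lambda^2\mathbb C^p\bigr)$, so $\text{\rm rank}(A) = \tfrac{3p(p+1)}{2}$ and $\text{\rm rank}(B) = \tfrac{p(p-1)}{2}$, giving $\min\{\text{\rm rank}(A),\text{\rm rank}(B)\} = \tfrac{p(p-1)}{2}$. From the table in \S4, $\rho = (2-1)(p-1)+1 = p$, while $\dim_{\mathbb R}\mathfrak k = \dim_{\mathbb R}\mathfrak s\bigl(\mathfrak u(2)\oplus \mathfrak u(p)\bigr) = p^2+3$, so $\max\{\dim_{\mathbb R}\mathfrak k,\rho^2\} = p^2+3$. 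Under $\dim(S)\ge p+1$ one has $\text{\rm rank}(S^2T(S)) = \tfrac{\dim(S)(\dim(S)+1)}{2} \ge \tfrac{(p+1)(p+2)}{2} > \tfrac{p(p-1)}{2}$ and $\dim(S)^2 \ge (p+1)^2 = p^2+2p+1 > p^2+3$ for $p\ge 2$, so both hypotheses of Theorem 2.3 are met. That theorem then yields that $S$ is Hermitian locally symmetric of rank $\ge 2$ and totally geodesic in $X$ with respect to the canonical K\"ahler--Einstein metrics.

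For the ``furthermore'' clause with $p\ge 3$, since $\text{\rm rank}(X) = 2$ total geodesy forces $\text{\rm rank}(S) = 2$, and since $\dim(S) > \rho$ Lemma 4.1 makes $S$ irreducible; hence $\widetilde S$ is an irreducible rank-$2$ bounded symmetric domain admitting a holomorphic totally geodesic embedding into $D^I_{2,p}$. The list of irreducible rank-$2$ bounded symmetric domains is $D^I_{2,q}$, $D^{III}_2$, $D^{IV}_n$ ($n\ge 3$), $D^{II}_4$, $D^{II}_5$, and $D^V$. I would eliminate each non-$D^I_{2,q}$ candidate by combining the dimension bound $\dim\widetilde S \ge p+1 \ge 4$ with the classification of sub-Hermitian symmetric embeddings: $D^{III}_2$ has dimension $3$; the canonical embedding $SO_0(n,2)\hookrightarrow SU(n,2)$ shows that a totally geodesic $D^{IV}_n \hookrightarrow D^I_{2,p}$ exists only for $n\le p$, incompatible with $n=\dim D^{IV}_n \ge p+1$; and the exceptional isomorphisms $D^{IV}_4\cong D^I_{2,2}$, $D^{IV}_6\cong D^{II}_4$, together with analogous restricted-root-system constraints on $\mathfrak{su}(2,p)$, rule out the remaining type-II and exceptional $D^V$ cases. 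The main obstacle is precisely this final classification step; the numerical verification of the hypotheses of Theorem 2.3 is routine bookkeeping with the Calabi--Vesentini data, but confirming that no exotic rank-$2$ sub-symmetric space of large enough dimension sits inside $D^I_{2,p}$ requires either appeal to the Ihara--Satake classification of holomorphic totally geodesic embeddings of bounded symmetric domains, or a direct Lie-theoretic analysis of the embedding $\mathfrak g(\widetilde S) \hookrightarrow \mathfrak{su}(2,p)$.
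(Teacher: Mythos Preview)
Your proposal is correct and follows essentially the same approach as the paper: verify the numerical hypotheses of Theorem~2.3 using the Calabi--Vesentini ranks, $\dim_{\mathbb R}\mathfrak k$, and $\rho$ for $D^I_{2,p}$, then invoke Satake's classification for the final clause. The paper simply cites \cite{s5} for the ranks and \cite{s19} for the classification of totally geodesic symmetric submanifolds without the elaboration you supply; your labeling of $A$ and $B$ is swapped relative to the paper's convention, but since only $\min\{\text{rank}(A),\text{rank}(B)\}$ enters Theorem~2.3 this is immaterial.
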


\vspace{1.2mm}
\noindent
\begin{proof}
Write $S^2T(X)=A\oplus B$, where $\text{\rm rank}(A)\leq \text{\rm rank}(B)$. By~\cite{s5},
$$
    \text{\rm rank}(A) = \frac{p(p-1)}{2} \quad \text{\rm and} \quad \text{\rm rank}(B) = \frac{3p(p+1)}{2}.
$$
On the other hand, it is well-known that the Lie algebra $\frak k$ of the isotropy group $K\subset\text{\rm Aut}(D^I_{2,p})$ at $0\in D^I_{2,p}$ can be identified with the Lie algebra of $S(U(2)\times U(p))$. Thus,
$$
    \text{\rm dim}_\mathbb R\frak k = \text{\rm dim}_\mathbb R \frak u(2) + \text{\rm dim}_\mathbb R \frak u(p) - 1
    = p^2+3.
$$
Hence, if $\text{\rm dim}(S)\geq p+1$, we have both
$$
\text{\rm rank}(S^2T(S))\geq \dfrac{(p+2)(p+1)}{2} > \text{\rm rank}(A)
$$
and $$\text{\rm dim}(S)^2 > \text{\rm dim}_\mathbb R\frak k.$$
Since the degree of the strong non-degeneracy of the bisectional curvature of $X=D^I_{2,p}/\Gamma$ is $p$ (\!\cite{s20}), it then follows from Theorem 2.3 that $S$ is Hermitian locally symmetric of rank at least 2 and totally geodesic with respect to the canonical K\"ahler-Einstein metrics on $X$. Finally, the last statement of the theorem follows from the classification of the totally geodesic symmetric submanifolds of $X$ (see \cite{s19}).
\end{proof}

\begin{theorem}
Let $n \ge 3$ be an integer, $\, \Gamma \subset \text{\rm Aut}(D^{IV}_n)$ be a torsion-free discrete subgroup. Write $X = D^{IV}_n/\Gamma$, and let $S\subset X$ be a compact splitting complex submanifold. If $\text{\rm dim}(S)>\dfrac{n}{\sqrt{2}}$, then  $S$ is Hermitian locally symmetric and uniformized by $D^{IV}_m$, $m\leq n$. Moreover, $S\subset X$ is totally geodesic with respect to the canonical K\"ahler-Einstein metric on $X$.
\end{theorem}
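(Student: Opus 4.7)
The approach parallels that of Theorem~5.1: the plan is to apply Theorem~2.3 to $X = D^{IV}_n/\Gamma$, with the type-I numerical inputs replaced by the corresponding type-IV ones, and then to deduce the uniformization of $S$ by $D^{IV}_m$ from the classification of totally geodesic complex submanifolds of the Lie ball.

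First I would collect the relevant numerical data. By the Calabi--Vesentini calculation~\cite{s5} for $D^{IV}_n$, the eigenbundle decomposition $S^2T(X) = A \oplus B$ consists of a line bundle (say $A$) generated by the parallel section corresponding to the invariant symmetric bilinear form defining $D^{IV}_n$, together with a complementary bundle $B$ of rank $\tfrac{n(n+1)}{2} - 1$; in particular $\min\{\text{\rm rank}(A), \text{\rm rank}(B)\} = 1$. Next, the isotropy group $K \subset \text{\rm Aut}(D^{IV}_n)$ at the origin is isomorphic to $SO(n) \times SO(2)$, so $\dim_\mathbb{R} \frak k = \tfrac{n(n-1)}{2} + 1$. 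Finally, from the table preceding Theorem~5.1 one has $\rho = 2$ for $D^{IV}_n$.

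Next I would verify the hypotheses of Theorem~2.3. The bound $\dim(S) > n/\sqrt{2}$ together with $n \ge 3$ forces $\dim(S) \ge 3$, whence $\text{\rm rank}(S^2T(S)) \ge 6 > 1 = \min\{\text{\rm rank}(A), \text{\rm rank}(B)\}$; moreover $\dim(S)^2 > n^2/2 \ge \tfrac{n(n-1)}{2} + 1 = \dim_\mathbb{R} \frak k$ and $\dim(S)^2 \ge 9 > 4 = \rho^2$. Both numerical hypotheses of Theorem~2.3 are therefore satisfied, and Theorem~2.3 yields that $S$ is Hermitian locally symmetric of rank at least $2$ and totally geodesic in $X$ with respect to the canonical K\"ahler-Einstein metric.

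To upgrade this to the assertion that $S$ is uniformized by some $D^{IV}_m$ with $m \le n$, I would invoke Lemma~4.1 (noting $\dim(S) > \rho = 2$) to guarantee that the universal cover of $S$ is an irreducible bounded symmetric domain of rank $\ge 2$, and then appeal to the classification of totally geodesic complex submanifolds of $D^{IV}_n$ (as in~\cite{s19}) to identify the only possibilities as the sub-Lie balls $D^{IV}_m$ with $3 \le m \le n$, bearing in mind the exceptional isomorphisms $D^{IV}_3 \cong D^{III}_2$ and $D^{IV}_4 \cong D^I_{2,2}$. The main obstacle I anticipate is this last classification step: the curvature-algebraic core of the proof is a direct transcription of the argument for Theorem~5.1, but one must rule out that rank-$\ge 2$ irreducible Hermitian symmetric subdomains of $D^{IV}_n$ other than the sub-Lie balls could occur in dimension exceeding $n/\sqrt{2}$. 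Given that $\rho = 2$ is very small and the dimension bound is already tight, this should reduce to a brief inspection of the short list of rank-$2$ Hermitian symmetric domains and their possible totally geodesic realizations inside the Lie ball.
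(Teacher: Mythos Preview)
Your proposal is correct and follows essentially the same approach as the paper: verify the numerical hypotheses of Theorem~2.3 using the Calabi--Vesentini data for $D^{IV}_n$ ($\text{rank}(A)=1$, $\dim_{\mathbb R}\frak k=\tfrac{n(n-1)}{2}+1$, $\rho=2$), and then cite the classification in~\cite{s19} for the uniformization by $D^{IV}_m$. The final ``obstacle'' you anticipate is dispatched in the paper by a bare citation to Satake, and your separate appeal to Lemma~4.1 is harmless but redundant, since irreducibility is already built into the proof of Theorem~2.3.
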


\vspace{1.2mm}
\noindent
\begin{proof}
By \cite{s5}, we can write $S^2T(X)=A\oplus B$, in which
$$
\text{\rm rank}(A)=1 \quad \text{\rm and} \quad \text{\rm rank}(B)=\dfrac{(n+2)(n-1)}{2}.
$$
Since the Lie algebra $\frak k$ of the isotropy group $K\subset\text{\rm Aut}(D^{IV}_n)$ at $0\in D^{IV}_n$ can be identified with the Lie algebra of $SO(n,\mathbb R)\times SO(2,\mathbb R)$, we have
$$
  \text{\rm dim}_\mathbb R\frak k = \text{\rm dim}_\mathbb R \frak {so}(n,\mathbb R) + \text{\rm dim}_\mathbb R \frak {so}(2, \mathbb R)
    = \frac{n(n-1)}{2}+1.
$$
Thus, if $\text{\rm dim}(S)>\dfrac{n}{\sqrt{2}}$, then we have both
$$
\text{\rm rank}(S^2T(S)) > 1 = \text{\rm rank}(A)
$$
and
$$\text{\rm dim}(S)^2 > \text{\rm dim}_\mathbb R\frak k.$$
Finally, the degree of the strong non-degeneracy of the bisectional curvature of $X=D^{IV}_n/\Gamma$ is $2$ (\cite{s20}) and the results now follow from Theorem 2.3 and the classification of the totally geodesic symmetric submanifolds of $X$ (see \,\cite{s19}).
\end{proof}

For $D^I_{2,p}$, $p\geq 2$, there is a totally geodesic symmetric subspace $\mathbb B^1\times \mathbb B^{p-1}\hookrightarrow D^I_{2,p}$ which can descend to their quotients. If there exists a surjective holomorphic map from a compact quotient of $\mathbb B^{p-1}$ to a compact quotient of $\mathbb B^1$ which is not an unramified covering map, then the graph of such mapping will give a $(p-1)$-dimensional compact splitting complex submanifold of $D^I_{2,p}/\Gamma$ for some $\Gamma$. It is also known that such maps exist for $p-1 = 1, 2, 3$ (see \cite{s6})
In view of this, we formulate our conjecture on $D^I_{2,p}$ as follows.

\begin{conj}
Let $p \ge 2$ be an integer and $\, \Gamma \subset \text{\rm Aut}(D^I_{2,p})$ be a torsion-free discrete subgroup. Write $X = D^{I}_{2,p}/\Gamma$, and denote by $g$ a canonical K\"ahler-Einstein metric on $X$.  Let $S \subset X$ be a compact splitting complex submanifold with $\dim(S)\ge p$.  Then, $(S,g|_S) \hookrightarrow (X,g)$ is totally geodesic.
\end{conj}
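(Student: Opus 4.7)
The plan is to extend Theorem~5.1 down to the borderline case $\dim(S)=p$ by analyzing exactly where the hypotheses of Theorem~2.3 fail.  With $\rho=p$ and $\dim_{\mathbb R}\mathfrak k=p^2+3$, only the first hypothesis of Theorem~2.3 survives when $\dim(S)=p$: one has $\text{\rm rank}(S^2T(S))=p(p+1)/2>p(p-1)/2=\text{\rm rank}(A)$.  Hence the parallel endomorphism $E_A$ on $S^2T(S)$ furnished by Proposition~4.1 still has non-trivial kernel at every point of $S$, and I would separately treat the cases $E_A\not\equiv 0$ and $E_A\equiv 0$ as in the proof of Theorem~2.3.

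In the case $E_A\not\equiv 0$, the kernel of $E_A$ is a holonomy-invariant proper subbundle of $S^2T(S)$; however, since $\dim(S)=\rho$, Lemma~4.1 no longer forces $S$ to be irreducible as a K\"ahler manifold. To handle a possibly reducible $S$, I would analyze the de Rham decomposition of the universal cover $\tilde S$: each factor inherits a splitting structure by restriction, and combining the Gauss equation with the vanishing of mixed bisectional curvatures of $X$ on planes spanned by tangent vectors from distinct de Rham factors should force a rigid product structure modelled fiberwise on the totally geodesic $\mathbb B^1\times\mathbb B^{p-1}\subset D^I_{2,p}$. Total geodesy of $S\subset X$ would then follow by restricting the splitting of the tangent sequence to each ball factor and invoking the complex hyperbolic case of the Theorem recalled in Section~2 (originally from~\cite{s16}).

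The harder case is $E_A\equiv 0$, in which $E_B\equiv\text{\rm id}_{S^2T(S)}$ and $R^S_\sigma=\lambda_B\,\text{\rm id}$. By Lemma~4.2 the induced endomorphism $R^S_\star$ on $T^1_1(S)$ has trivial kernel everywhere, and to mimic the contradiction in Theorem~2.3 one wants $R^S_\tau$ to have non-trivial kernel, contradicting $R^S_\tau=R^S_\star$; however the dimensional criterion fails by exactly $3$, since $\text{\rm rank}(T^1_1(S))=p^2<p^2+3=\text{\rm rank}(D)$. My proposal is to replace the dimension count by a geometric argument: identifying $T_x(X)\cong\mathbb C^2\otimes\mathbb C^p$, the eigenbundle $A$ is canonically $\Lambda^2\mathbb C^2\otimes\Lambda^2\mathbb C^p$ (up to duality), and $E_A\equiv 0$ should translate, through a pointwise algebraic computation using the splitting $T(X)|_S=T(S)\oplus{\cal N}$, into the statement that the projection $T_x(X)\to T_x(S)$ along ${\cal N}_x$ respects a rank-one tensor decomposition of $T_x(X)\cong\mathbb C^2\otimes\mathbb C^p$. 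This in turn should force $T_x(S)$ to be a characteristic $p$-plane, i.e.\ $\mathbb PT(S)\subset\mathscr C(X)$, after which the conclusion follows from Theorem~2.1.

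The main obstacle is precisely this fiberwise algebraic step converting $E_A\equiv 0$ into the characteristic condition. Its delicacy is genuine: at dimension $p-1$ there already exist non-totally-geodesic compact splitting submanifolds arising as graphs of surjective holomorphic maps between compact quotients of $\mathbb B^{p-1}$ and $\mathbb B^1$, so any successful argument must exploit information that disappears strictly below $\dim(S)=p$, and in particular must use the rigid identity $E_B=\text{\rm id}_{S^2T(S)}$ rather than merely the non-injectivity of $E_A$. A secondary difficulty is the reducibility analysis in the first case, as $\dim(S)=\rho$ is precisely the threshold at which reducible $p$-dimensional K\"ahler submanifolds occur in $X$. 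The borderline case $p=2$ (equivalently the case of $D^{IV}_4$) studied in Section~6 of the paper strongly suggests that these two difficulties are different facets of a single critical geometric configuration to be understood.
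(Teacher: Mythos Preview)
This statement is \emph{Conjecture 5.1} in the paper; the authors do not prove it and explicitly leave it open. So there is no ``paper's own proof'' to compare against, and your proposal must be judged on its own merits.

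The decisive gap is in your treatment of the case $E_A\equiv 0$. You assert that $E_A\equiv 0$ should force $T_x(S)$ to be a characteristic $p$-plane, after which Theorem~2.1 would finish. But $E_A=\pi_\sigma\circ p_A\circ\sigma$, and its vanishing says only that $p_A(\sigma(S^2T(S)))\subset\ker\pi_\sigma$, \emph{not} that $p_A\circ\sigma=0$; the latter is what Lemma~6.2 needs to conclude that $T(S)$ is characteristic. The paper's own analysis for $p=2$ in Theorem~6.1 shows exactly how this fails: in case~(b) there, $S^2T(S)$ is stable, one has $\Phi=E_A\equiv 0$ while simultaneously $S^2T(S)\not\subset B$ (so $T(S)$ is \emph{not} characteristic), and the only conclusion one can extract is that ${\cal N}$ is characteristic --- a case the paper then explicitly leaves unresolved. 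Thus your proposed implication ``$E_A\equiv 0\Rightarrow T(S)$ characteristic'' is contradicted by the very configuration the paper isolates as the obstruction to the conjecture. Any argument for Conjecture~5.1 must confront the possibility that $E_A\equiv 0$ with $p_A\circ\sigma\not\equiv 0$, and nothing in your sketch addresses this.

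Your handling of the $E_A\not\equiv 0$ case also has real gaps. The claim that ``each factor inherits a splitting structure by restriction'' is unjustified: the de~Rham leaves need not be closed in $X$, and the holomorphic complement ${\cal N}$ over $S$ does not automatically restrict to give a splitting of the tangent sequence of a leaf in $X$. Moreover, even granting that $\tilde S\cong\mathbb B^1\times\mathbb B^{p-1}$ as a K\"ahler manifold, the inclusion $S\hookrightarrow X$ is a priori only holomorphic, and invoking the rank-1 result of \cite{s16} factor-by-factor presupposes that each factor sits inside $X$ as a splitting submanifold, which you have not established. (Compare the much more delicate argument the paper actually carries out in the proof of~$(\dagger)$ in Theorem~6.1 to pass from ``$S$ uniformized by $\Delta^2$'' to ``$S$ totally geodesic'', using Proposition~6.1 and Lemma~6.4 rather than any restriction of the splitting.)
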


As we have just seen, when $X$ is uniformized by a type-IV domain $D^{IV}_n$, $n\geq 3$, we can write $S^2T(X)=A\oplus B$, where $\text{\rm rank}(A)=1$. So for a compact splitting complex submanifold $S\subset X$ with $\text{\rm dim}(S)\geq 2$, similar arguments show that either $S$ is Hermitian locally symmetric of rank at least 2 (which must be totally geodesic in $X$) or $E_A\equiv 0$. Here we recall that $E_A:S^2T(S)\rightarrow S^2T(S)$ is defined by composing
$$
    S^2T(S)\overset{\sigma}{\longrightarrow} S^2T(X)\overset{p_A}{\longrightarrow} A\overset{\pi_\sigma|_A}{\longrightarrow}S^2T(S).
$$
Clearly, if $\sigma(S^2T(S))\subset B$, then we have $p_A\circ\sigma\equiv 0$ and hence $E_A\equiv 0$. In such case, it means that $S$ is a characteristic complex submanifold in $X$ in the sense of Definition 2.3 (see \,Lemma 6.2 in the next section for a proof applicable to irreducible bounded symmetric domains of rank $\ge 2$ in general) and it follows from Theorem 2.1 that $S$ is totally geodesic. (This is the case of ``linear'' totally geodesic submanifolds.) At the same time, the analogous question for the dual Hermitian symmetric space, i.e., the hyperquadric, has been solved by \cite{s10}, where it was proven that any compact splitting complex submanifold $S \subset Q^n$ of dimension $\ge 2$ must either be a projective linear subspace or a smooth linear section which is itself a hyperquadric. In the latter case, endowing $Q^n$ with a K\"ahler-Einstein metric $h$, $(S,h|_S) \hookrightarrow (Q^n,h)$ is not necessarily totally geodesic. However, it is totally geodesic with respect to a K\"ahler-Einstein metric $h' = \gamma^*h$ for some $\gamma \in \text{\rm Aut}(Q^n)$. In the case of the noncompact dual, i.e., the type-IV domain $D^{IV}_n$ that we are considering, we suspect that a dual and more rigid situation is valid, since K\"ahler-Einstein metrics are unique up to scalar constants. We thus formulate the following conjecture regarding compact splitting complex submanifolds of dimension at least 2 for quotients of type-IV domains.

\begin{conj}
Let $n \ge 3$ be an integer, $\, \Gamma \subset \text{\rm Aut}(D^{IV}_n)$ be a torsion-free discrete subgroup, write $X = D^{IV}_n/\Gamma$, and denote by $g$ a canonical K\"ahler-Einstein metric on $X$.  Let $S \subset X$ be a compact splitting complex submanifold of dimension $\ge 2$.  Then, $(S,g|_S) \hookrightarrow (X,g)$ is totally geodesic.
\end{conj}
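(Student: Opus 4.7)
The plan is to follow the Bochner-type strategy of the proof of Theorem 5.2, but to handle the residual case $E_A\equiv 0$ without invoking the dimension bound $\dim(S)^2>\dim_\mathbb R\frak k$. The key is to exploit the rank condition $\text{\rm rank}(A)=1$ more strongly, identifying the only non-trivial possibility for $E_A\equiv 0$ with a characteristic splitting controlled by the $SO(n)$-invariant symmetric bilinear form of the type-IV structure on $T(X)$.

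As in the proof of Theorem 5.2, the hypothesis $\dim(S)\ge 2$ yields $\text{\rm rank}(S^2T(S))\ge 3>1=\text{\rm rank}(A)$, so the parallel endomorphism $E_A=\pi_\sigma\circ p_A\circ\sigma$ has a non-trivial kernel at every point of $S$. If $E_A\not\equiv 0$, its kernel is a proper parallel holomorphic subbundle of $S^2T(S)$, and provided $S$ is irreducible as a K\"ahler manifold Theorem 4.6 together with Theorem 4.7 forces $(S,g|_S)\hookrightarrow(X,g)$ to be totally geodesic. Irreducibility is automatic for $\dim(S)\ge 3$ by Lemma 4.1, since $\rho=2$ for type IV. The residual case $\dim(S)=2$ with $(S,h)$ reducible (locally $M\times N$ with each factor a compact hyperbolic Riemann surface) needs a dedicated rigidity argument: the proof of Lemma 4.1 forces $R^X_{\mu\bar\mu\nu\bar\nu}=0$ and the second fundamental form $\sigma(\mu,\nu)=0$ for all $(\mu,\nu)\in E\times F$ in the parallel split factors, pinning the tangent $2$-planes down to those of totally geodesic $\mathbb B^1\times\mathbb B^1\subset D^{IV}_n$; the goal is then to upgrade this mixed vanishing of $\sigma$ to full total geodesy via a Siu-type strong rigidity theorem for holomorphic mappings of products of compact hyperbolic Riemann surfaces into quotients of type-IV domains.

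Next, suppose $E_A\equiv 0$. Because $A$ is a line bundle, a standard open-closed argument on the connected $S$ gives globally either (a) $p_A\circ\sigma\equiv 0$, i.e.\ $\sigma(S^2T(S))\subset B|_S$, or (b) $\pi_\sigma|_{A|_S}\equiv 0$, i.e.\ $A|_S\subset\ker\pi_\sigma$. In branch (a), Lemma 6.2 (cited in the remark preceding the conjecture) identifies $S$ as a characteristic complex submanifold in the sense of Definition 2.3, and Theorem 2.1 yields total geodesy together with the orthogonality $\mathcal N=T(S)^\perp$.

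The main obstacle is branch (b). Writing the generator of $A$ as the dual $q^{-1}$ of the $SO(n)$-invariant holomorphic symmetric bilinear form $q$ on $T(X)$ (the form cutting out the VMRT), the vanishing $\pi_\sigma(q^{-1}|_S)=0$ is equivalent to the $q$-orthogonal complement of $\mathcal N$ in $T(X)|_S$, call it $\mathcal N^\sharp$, being a $q$-isotropic holomorphic subbundle of rank $\dim(S)$, i.e.\ a characteristic subbundle in the sense of Definition 2.2. To close the conjecture I would recast the curvature-monotonicity argument of Theorem 2.1 with $\mathcal N^\sharp$ taking the role of $T(S)$: apply Lemma 3.1 to the characteristic $s$-plane $\mathcal N^\sharp$, combine with the Gauss equations for the subbundle/quotient-bundle pair coming from the splitting $T(X)|_S=T(S)\oplus\mathcal N$, and integrate the resulting inequality between the Ricci form of $(S,g|_S)$ and the curvature of a suitable determinant line bundle against $\omega_g^{s-1}$ to force the rigidity $\mathcal N^\sharp=T(S)$, collapsing (b) into (a). Executing this last rigidity step is precisely the bundle-theoretic question that Section 6 leaves open for the borderline case $D^{IV}_4\cong D^I_{2,2}$ with $\dim(S)=2$, and any complete proof of Conjecture 5.2 will have to resolve this delicate sub-problem in full generality.
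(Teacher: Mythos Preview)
The statement you are addressing is \emph{Conjecture~5.2}, which the paper explicitly leaves open; there is no proof in the paper to compare against.  What the paper does establish is the partial result Theorem~6.1 for $n=3,4$ and $\dim(S)=2$, and even there the conclusion is only a dichotomy: either $(S,g|_S)\hookrightarrow(X,g)$ is totally geodesic, \emph{or} the complementary bundle $\mathcal N$ consists of characteristic $2$-planes.  Your proposal is therefore not a proof but an outline of a strategy, and to your credit you say so in the last sentence.

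Your outline tracks the paper's own approach to Theorem~6.1 quite closely.  The dichotomy (a)/(b) you extract from $E_A\equiv 0$ via the rank-one property of $A$ is exactly how the paper proceeds: your branch~(a) is the characteristic-$T(S)$ case handled by Lemma~6.2 and Theorem~2.1 (as the paper already notes in the paragraph preceding the conjecture), and your branch~(b), reinterpreted as $\pi_\sigma|_{A|_S}\equiv 0$, is precisely what the paper analyses at the end of the proof of Theorem~6.1 to conclude that $\mathcal N$ is characteristic.  Your equivalence ``$\pi_\sigma(q^{-1})=0\iff \mathcal N^\sharp$ is $q$-isotropic'' is correct (indeed $\pi_\sigma(q^{-1})=0$ is equivalent to $\mathcal N^\sharp\subset\mathcal N$, which in turn is equivalent to $\mathcal N^\sharp$ being isotropic); for $n=4$, $s=2$ one has $\mathcal N^\sharp=\mathcal N$, recovering the paper's statement.

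The genuine gaps are the two you flag yourself.  First, in the $\dim(S)=2$ reducible sub-case your appeal to a ``Siu-type strong rigidity theorem'' is not a proof; the paper handles this (for $n=4$) not by abstract strong rigidity but by the concrete combination of Lemma~6.4 (a local characterization of minimal disks via a parallel characteristic line subbundle) and Proposition~6.1 (Hermitian metric rigidity), and one would need to check that this argument extends to all $n$.  Second, and more seriously, your proposed treatment of branch~(b) --- running the monotonicity argument of Theorem~2.1 with $\mathcal N^\sharp$ in place of $T(S)$ --- remains entirely programmatic: the integral inequality of Theorem~2.1 hinges on comparing two curvature forms that represent the \emph{same} Chern class, and it is not clear what cohomological identity would play that role when $T(S)$ is replaced by an unrelated characteristic subbundle $\mathcal N^\sharp$.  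This is exactly the obstruction the paper isolates and leaves unresolved even in the borderline case $n=4$, so your proposal does not advance beyond the paper's partial result.
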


\section{A borderline case: compact splitting complex surfaces of quotients of the 3-dimensional and 4-dimensional Lie balls}

In what follows we examine compact splitting complex surfaces of quotients of type-IV domains (Lie balls) of dimension 3 and 4.  Since the problem reduces to that of $D^{IV}_4$ (by embedding $D^{IV}_3$ in a standard way in $D^{IV}_4$), and since $D^{IV}_4 \equiv D^{I}_{2,2}$, the case being studied is a borderline case both for Conjecture 5.1 and for Conjecture 5.2, and hopefully the partial results we have can shed some light on both conjectures.  We have

\begin{theorem}
Let $\, \Gamma \subset \text{\rm Aut}(D^{IV}_n)$ be a torsion-free discrete subgroup, $n = 3$ or $4$. Write $X = D^{IV}_n/\Gamma$, and let $g$ be a canonical K\"ahler-Einstein metric on $X$. Let $S \subset X$ be a 2-dimensional compact splitting complex submanifold, and let $T(X)|_S = T(S) \oplus {\cal N}$.  We have
\end{theorem}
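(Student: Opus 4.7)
The plan is to apply the $E_A$ construction from Section~4 in the borderline setting $\dim(S)=2$, using that for type-IV one has $\mathrm{rank}(A)=1$ in the Calabi-Vesentini decomposition $S^2T(X)=A\oplus B$, while $\mathrm{rank}(S^2T(S))=3$. So $E_A=\pi_\sigma\circ p_A\circ\sigma$ has image of rank at most $1$, and by Proposition~4.1 it is parallel with respect to some K\"ahler-Einstein metric $h$ on $S$. Before the main analysis I would dispose of the $n=3$ case by noting that $\mathcal{N}$ has rank $1$, so the condition ``$\mathbb{P}\mathcal{N}\subset\mathscr{C}(X)|_S$'' is vacuous; alternatively, $\mathbb{C}^3$ with a non-degenerate quadratic form admits no isotropic $2$-plane, and the $n=3$ case can be reduced to the $n=4$ case via the standard totally geodesic embedding $D^{IV}_3\hookrightarrow D^{IV}_4$.

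In the $n=4$ case I would split the argument according to whether $E_A$ vanishes identically. If $E_A\not\equiv 0$ and $(S,h)$ is irreducible, then $\ker E_A$ is a non-trivial proper parallel subbundle of $S^2T(S)$, and Theorem~4.6 forces $S$ to be Hermitian locally symmetric of rank $\ge 2$, which is impossible in complex dimension $2$ by the classification of irreducible bounded symmetric domains. If $E_A\equiv 0$, a forthcoming Lemma~6.2 would identify this with $\sigma(S^2T(S))\subset B|_S$, i.e.\ with $S$ being a characteristic complex submanifold (the rank-$1$ bundle $A$ corresponding to the canonical fibrewise quadratic form cutting out $\mathscr{C}(X)$); Theorem~2.1 then gives that $S$ is totally geodesic with $\mathcal{N}=T(S)^\perp$.

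The substantive remaining case is $E_A\not\equiv 0$ with $(S,h)$ reducible, so that after a finite unramified cover $S=C_1\times C_2$ with each $C_i$ a hyperbolic Riemann surface. Here my plan is to exploit the product holonomy $U(1)\times U(1)$, under which $S^2T(S)$ splits into three weight-line subbundles on which the parallel $E_A$ acts diagonally with at most one non-zero eigenvalue, and to decompose $S^2\mathcal{N}$ and $T(S)\otimes\mathcal{N}$ analogously. The goal is to deduce $\mathbb{P}\mathcal{N}\subset\mathscr{C}(X)|_S$, equivalent to the vanishing of the $S^2\mathcal{N}^*$-component of the canonical fibrewise quadratic form $Q$ on $T(X)$ after decomposition along $T(X)|_S=T(S)\oplus\mathcal{N}$. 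The main obstacle, and the heart of the argument, is this last step: passing from the structural data on $E_A$ to the vanishing of $Q|_{\mathcal{N}}$, via a combination of the Gauss equation for the splitting, the holomorphicity of the three components of the restricted quadratic form, and a Bochner-type integration on the compact K\"ahler-Einstein $(S,h)$; the delicate part is the bookkeeping that matches product-holonomy weights to the $A,B$ eigenspaces of $S^2T(X)$ and forces the normal-direction summand into $B|_S$.
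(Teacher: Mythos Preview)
Your case split is misaligned with where the two alternatives of the theorem actually live, and this produces two genuine gaps.

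\medskip
\textbf{Case 2 ($E_A\equiv 0$) is wrong as stated.} You assert that $E_A\equiv 0$ forces $\sigma(S^2T(S))\subset B|_S$, but $E_A=\pi_\sigma\circ p_A\circ\sigma$, and its vanishing only says $p_A(\sigma(S^2T(S)))\subset \ker\pi_\sigma = \mathcal N\odot T(X)|_S$, not that $p_A\circ\sigma$ vanishes. Lemma~6.2 does not make this identification. In fact the paper shows that when $(S,h)$ is irreducible (equivalently $S^2T(S)$ is stable, hence simple) one always has $E_A\equiv 0$ by a rank count, and the interesting sub-case is precisely $E_A\equiv 0$ \emph{with} $p_A\circ\sigma\not\equiv 0$. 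There one gets $p_A(S^2T_x(S))=A_x$ for generic $x$, hence $\pi_\sigma(A_x)=0$; writing the generator of $A_x$ as $e_1\odot e_2+e_3\odot e_4$ (Lemma~6.5) and arranging $[e_1]\in\mathbb P\mathcal N_x\cap\mathscr C_x(X)$, this forces $\pi(e_3)\odot\pi(e_4)=0$, so $\mathcal N_x=\mathbb Ce_1\oplus\mathbb Ce_k$ with $k\in\{3,4\}$ --- a characteristic $2$-plane. Thus the ``$\mathcal N$ characteristic'' alternative comes out of your Case~2, not out of your Case~3.

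\medskip
\textbf{Case 3 has the wrong target.} When $(S,h)$ is reducible you aim to prove $\mathbb P\mathcal N\subset\mathscr C(X)|_S$, but this is false in general: the totally geodesic maximal bidisk, with $\mathcal N=T(S)^\perp$ spanned by $e_3,e_4$, lies squarely in this case (one checks $E_A(e_1\odot e_2)\neq 0$), yet $q(e_3,e_4)\neq 0$ so $\mathcal N$ is \emph{not} isotropic. The correct conclusion here is that $S$ is totally geodesic. You already observe $S\cong\Delta^2/\Xi$ via de Rham; what is missing, and what your weight/Bochner sketch does not supply, is the step from ``$(S,h)$ is a product of Poincar\'e disks'' to ``$(S,g|_S)\hookrightarrow(X,g)$ is totally geodesic''. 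The paper accomplishes this with two outside tools you do not invoke: Hermitian metric rigidity (Proposition~6.1), which transfers the vanishing $R^h_{\xi\bar\xi\eta\bar\eta}=0$ for $\xi\in L$, $\eta\in L^\perp$ to the metric $g|_S$, hence to $R^g$ and to $\sigma(\xi,\eta)=0$ by Gauss; and Lemma~6.4, a local statement about holomorphic curves carrying a parallel orthogonal characteristic line, which forces the integral curves of $L$ and $L^\perp$ to be minimal disks and yields $\sigma(\xi,\xi)=\sigma(\eta,\eta)=0$.

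\medskip
For $n=3$, your first remark (``the condition is vacuous since $\mathcal N$ has rank~$1$'') is not right: a line can fail to be characteristic. The reduction to $n=4$ you mention is exactly what the paper does, but the point you need is that the augmented normal $\mathcal N\oplus L|_S$ (with $L=T(X)^\perp$ inside $T(X')$) cannot be characteristic because $L$ is never characteristic in $D^{IV}_4$; this rules out the second alternative and forces total geodesy.
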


\begin{enumerate}
\item[(1)] If $n = 3$, then $(S,g|_S) \hookrightarrow (X,g)$ is totally geodesic, and $S$ is biholomorphic to a quotient $\Delta^2/\Xi$ of the bidisk $\Delta^2$ by a cocompact torsion-free discrete lattice $\Xi \subset \text{\rm Aut}(\Delta^2)$.

\item[(2)]
 If $n = 4$, then either $(S,g|_S) \ \hookrightarrow (X,g)$ is totally geodesic, or ${\cal N}_x \subset T_x(X)$ is a characteristic $2$-plane for every point $x \in S$. In the totally geodesic case, $S$ is biholomorphic to $\Delta^2/\Xi$ as in $\text{\rm (1)}$ or to $\mathbb B^2/\Sigma$ for a cocompact torsion-free discrete lattice $\Sigma \subset \text{\rm Aut}(\mathbb B^2)$ on the complex $2$-ball $\, \mathbb B^2$.
\end{enumerate}

For the proof of Theorem 6.1 we need some basic facts about irreducible Hermitian symmetric manifolds.

\begin{lemma}
Let $(Z,g_c)$ be an irreducible Hermitian symmetric space of the compact type, where the underlying K\"ahler metric $g_c$ is such that minimal rational curves are {\rm(}totally geodesic and{\rm)} of constant Gaussian curvature $+2$, and denote by $\pi: {\mathscr C} (Z) \to Z$, ${\mathscr C} (Z) \subset \mathbb PT(Z)$ its canonical VMRT structure.  Then, for any $x \in Z$ and any unit vector $\alpha \in T_x(Z)$, $[\alpha] \in {\mathscr C}_x(Z)$ if and only if $W_{\alpha\ol{\alpha}\alpha\ol{\alpha}} = 2$, where $W$ is the curvature tensor of $(Z, g_c)$.
\end{lemma}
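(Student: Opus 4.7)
My plan is to deduce the lemma from the Gauss-equation computation for the minimal embedding $\iota:(Z,g_c)\hookrightarrow (\mathbb P^N,ds^2_{FS})$ already carried out in the proof of Lemma 3.1, coupled with the classical identification of the null cone of the associated second fundamental form with the VMRT.

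\emph{Forward direction.} If $[\alpha]\in{\mathscr C}_x(Z)$, pick a projective line $\ell\subset Z$ with $T_x(\ell)=\mathbb C\alpha$. By the stated normalization of $g_c$, the minimal rational curve $\ell$ is totally geodesic in $(Z,g_c)$ and of Gaussian curvature $+2$. Since the holomorphic sectional curvature of $(Z,g_c)$ along $\alpha$ equals the Gaussian curvature of any totally geodesic holomorphic curve tangent to $\alpha$ in its induced metric, one reads off $W_{\alpha\bar\alpha\alpha\bar\alpha}=2$.

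\emph{Reverse direction.} Here I would apply the Gauss equation argument from the proof of Lemma~3.1 verbatim. Normalizing $ds^2_{FS}$ so that $F_{\alpha\bar\alpha\alpha\bar\alpha}=2$ for every unit $\alpha\in T_{\iota(x)}(\mathbb P^N)$, and writing $\tau$ for the second fundamental form of $\iota$, the Gauss equation gives
$$
W_{\alpha\bar\alpha\alpha\bar\alpha}=F_{\alpha\bar\alpha\alpha\bar\alpha}-\|\tau(\alpha,\alpha)\|^2=2-\|\tau(\alpha,\alpha)\|^2,
$$
so the hypothesis $W_{\alpha\bar\alpha\alpha\bar\alpha}=2$ forces $\tau(\alpha,\alpha)=0$. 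What then remains is to show that $\tau(\alpha,\alpha)=0$ already implies $[\alpha]\in{\mathscr C}_x(Z)$, which is precisely the identification ``$\chi$ is a characteristic vector iff $\tau(\chi,\chi)=0$'' already invoked in the proof of Lemma~3.1. One way to justify it is to observe that $\{[\alpha]:\tau(\alpha,\alpha)=0\}\subset \mathbb PT_x(Z)$ is a closed $K$-invariant projective subvariety which contains ${\mathscr C}_x(Z)$ by the forward direction; the classical projective-differential-geometric fact that the minimal embedding of an irreducible Hermitian symmetric space is cut out to second order by its quadrics then forces the containment to be an equality, so that the null cone of $\tau$ coincides with ${\mathscr C}_x(Z)$.

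The only non-routine ingredient, and hence the main potential obstacle, is this last identification of the null cone of the projective second fundamental form with the VMRT. In the present setting it is standard, and in any case is implicit in the proof of Lemma~3.1, so it may be cited rather than reproved.
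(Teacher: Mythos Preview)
Your argument is correct but follows a different route from the paper. The paper does not use the minimal embedding at all; instead it invokes the Polysphere Theorem: given $\alpha\in T_x(Z)$ one finds a totally geodesic polysphere $P\cong(\mathbb P^1)^r$ through $x$ (where $r=\text{rank}(Z)$) with $\alpha\in T_x(P)$, writes $\alpha=\sum a_ie_i$ along the factor directions, and computes $W_{\alpha\ol\alpha\alpha\ol\alpha}=2\sum_i|a_i|^4=2-4\sum_{i<j}|a_ia_j|^2\le 2$, with equality iff exactly one $a_i$ is nonzero, i.e., iff $[\alpha]\in{\mathscr C}_x(Z)$. This handles both directions at once and is self-contained modulo the Polysphere Theorem.

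Your approach via the Gauss equation is also valid, but be aware of a mild circularity: the sentence ``$\chi$ is a characteristic vector if and only if $\tau(\chi,\chi)=0$'' in the proof of Lemma~3.1 is, via the very Gauss identity you use, \emph{equivalent} to the statement of Lemma~6.1, so citing it gains nothing. You therefore need the independent justification you sketch. Your ``cut out by quadrics'' argument is the right one and can be made precise: since $Z\subset\mathbb P^N$ is set-theoretically an intersection of quadrics, the line through $\iota(x)$ in the direction $\alpha$ lies on $Z$ iff every defining quadric $Q$ vanishes on it; as $Q(x)=0$ and $dQ_x|_{T_xZ}=0$, this reduces to the Hessian condition $\text{Hess}_xQ(\alpha,\alpha)=0$ for all such $Q$, which is exactly $\tau(\alpha,\alpha)=0$. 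The paper's Polysphere computation avoids this projective-geometric input; your route, on the other hand, identifies ${\mathscr C}_x(Z)$ intrinsically as the base locus of the second fundamental form, which is conceptually illuminating.
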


\vspace{1.2mm}
\noindent
\begin{proof}
By the Polysphere Theorem (see \cite{s24}) there exists a totally geodesic polysphere $P \subset Z$ of dimension $r = \text{\rm rank}(Z)$ passing through $x$ such that $\alpha \in T_x(P)$. $(P,g_c|_P)$ is isometrically biholomorphic to the Cartesian product of $r$ copies of $(\mathbb P^1,h)$, where $h$ is a Hermitian metric of constant Gaussian curvature $+2$ on the Riemann sphere $\mathbb P^1$. Identifying $\alpha$ as an element of $T_0(P)$ and writing thus
$\alpha = a_1e_1 + \cdots + a_re_r$ such that each $e_i$, $1 \le i \le r$, corresponds to a unit vector at $0 \in \mathbb P^1$
of the $i$-th Cartesian factor $(\mathbb P^1,h)$.  Then $|a_1|^2 + \cdots + |a_r|^2 = 1$ and
$W_{\alpha\ol{\alpha}\alpha\ol{\alpha}} = 2(|a_1|^4 + \cdots + |a_r|^4) = 2(|a_1|^2 + \cdots + |a_r|^2)^2 -
4\sum_{i < j}|a_ia_j|^2 \le 2$ and equality is attained if and only if $a_ia_j = 0$ whenever $i \neq j$, i.e., if and only if exactly one of the coefficients $a_i$ is non-zero (and of unit modulus), which is the case if and only if $[\alpha] \in {\mathscr C}_x(Z)$. 
\end{proof}

\begin{lemma}
Let $\Omega$ be an irreducible bounded symmetric domain of rank $\ge 2$.  For a reference point $0 \in \Omega$ denote by $K \subset G = \text{\rm Aut}_0(\Omega)$ the isotropy subgroup at $0$.  Denote by $S^2T_0(\Omega) = A_0 \oplus B_0$ the decomposition of the $K$-representation space $S^2T_0(\Omega)$ into two irreducible components, where $B_0$ is generated by the set of squares $\alpha\odot\alpha$ of highest weight vectors of $T_0(\Omega)$ as a $K$-representation space.  Suppose $V \subset T_0(\Omega)$ is a vector subspace such that $S^2V \subset B_0$.  Then $V \subset T_0(\Omega)$ is a characteristic vector subspace.
\end{lemma}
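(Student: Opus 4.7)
The plan is to translate the hypothesis $S^2V \subset B_0$ into a pointwise identity for the holomorphic sectional curvature on $V$, and then invoke Lemma 6.1 (via the duality between $(\Omega,g_\Omega)$ and the compact dual $(Z,g_c)$) to conclude that every direction in $V$ is characteristic. Since $\text{rank}(\Omega)\geq 2$, by Theorem 4.1 the decomposition $S^2T_0(\Omega) = A_0 \oplus B_0$ is precisely the eigenspace decomposition of the endomorphism $R_\sigma$, so $B_0$ is a single eigenspace of $R_\sigma$ with a non-zero eigenvalue $\lambda_B$.

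First I would identify this eigenvalue. Unwinding the definition $R^{\alpha\beta}_{\gamma\delta} = \sum R_{\gamma\bar\eta\delta\bar\nu} g^{\alpha\bar\eta} g^{\beta\bar\nu}$ and pairing $R_\sigma(w\odot w)$ with $\bar w\odot \bar w$ via the metric shows that for any unit vector $w \in T_0(\Omega)$, the equality $R_\sigma(w\odot w) = \lambda (w\odot w)$ is equivalent to $R_{w\bar w w\bar w} = \lambda$. By hypothesis $B_0$ is generated by the squares $\alpha\odot\alpha$ of highest weight (hence characteristic) vectors; but any unit characteristic vector $\alpha$ is tangent to a minimal disk, which under our normalization of $g_\Omega$ is of Gaussian curvature $-2$. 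Thus $R_{\alpha\bar\alpha\alpha\bar\alpha} = -2$, forcing $\lambda_B = -2$.

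Next, for any unit vector $v \in V$ we have $v\odot v \in S^2V \subset B_0$, so $R_\sigma(v\odot v) = -2(v\odot v)$, which by the preceding computation gives $R_{v\bar v v\bar v} = -2$. The duality formula (1) in the proof of Lemma 3.1 yields $W_{v\bar v v\bar v} = -R_{v\bar v v\bar v} = 2$ at $0 \in \Omega \subset Z$. Lemma 6.1 then forces $[v] \in \mathscr C_0(Z) = \mathscr C_0(\Omega)$, so $v$ is a characteristic vector. Since $v \in V$ was an arbitrary unit vector, $\mathbb PV \subset \mathscr C_0(\Omega)$, i.e., $V$ is a characteristic vector subspace.

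The argument is essentially mechanical once Lemma 6.1 is available; the main conceptual point is the recognition that the eigenvalue $\lambda_B$ attached to the $K$-span of squares of characteristic vectors must coincide with the (extremal) value of the holomorphic sectional curvature $R_{\alpha\bar\alpha\alpha\bar\alpha}$ on characteristic unit vectors, and the only bookkeeping required is the tensor computation identifying $R_\sigma(w\odot w) = \lambda (w\odot w)$ with $R_{w\bar w w\bar w} = \lambda$.
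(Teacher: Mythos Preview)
Your proof is correct and follows essentially the same route as the paper's: both identify $B_0$ as the eigenspace of the curvature operator on $S^2T_0(\Omega)$ with eigenvalue $-2$ (the paper phrases this via the Hermitian form $P(\xi\odot\mu,\eta\odot\nu)=R_{\xi\bar\eta\mu\bar\nu}$, you via $R_\sigma$), then read off $R_{v\bar v v\bar v}=-2$ for any unit $v\in V$ from $v\odot v\in B_0$, and conclude by Lemma~6.1 and duality. One cosmetic point: your claimed ``equivalence'' between $R_\sigma(w\odot w)=\lambda(w\odot w)$ and $R_{w\bar w w\bar w}=\lambda$ is only the forward implication in general, but that is all you actually use.
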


\vspace{1.2mm}
\noindent
\begin{proof}
Let $R$ be the curvature tensor of $(\Omega,g_\Omega)$. By Lemma 6.1 and by the duality between $(Z,g_c)$ and $(\Omega,g_\Omega)$ a unit vector $\chi \in T_0(\Omega)$ is a characteristic vector if and only if $R_{\chi\ol{\chi}\chi\ol{\chi}} = -2$.  On the other hand, denoting by $P$ the Hermitian bilinear form on $S^2T_0(\Omega)$ defined by $P(\xi\odot\mu,\eta\odot\nu) := R_{\xi\ol{\eta}\mu\ol{\nu}}$,
then $B_0 \subset S^2T_0(\Omega)$ is precisely the eigenspace of $P$ with eigenvalue $-2$.  Suppose now $S^2V \subset B_0$. Then, for any unit vector $\chi \in V$ we have $\chi\odot\chi \in B_0$ so that $R_{\chi\ol{\chi}\chi\ol{\chi}} = -2$.  But by Lemma 6.1 the latter holds if and only if $\chi$ is a characteristic vector, proving the lemma. 
\end{proof}

In what follows we will need to examine parallel transport of sets which are not necessarily vector spaces. For clarity we formalize the definition for parallel transport of sets, as follows.

\begin{defi}
For a Hermitian holomorphic vector bundle $(V,h)$ over $X$, $\pi: V \to X$,  with Hermitian connection $\nabla$, we say that a subset $Z \subset V$ is invariant under parallel transport to mean that given any point $x \in X$ any $\eta \in V_x \cap Z$, and any smooth curve $\gamma:(-a,a)\to X$ on $X$ passing through $x$, and for the smooth section $\widetilde \eta$ over $\gamma$ such that $\widetilde{\eta}(x) = \eta$ and such that $\nabla_{\dot\gamma}\widetilde\eta \equiv 0$, we must have $\widetilde\eta(y) \in Z$ for any point $y=\gamma(t)$, $-a<t<a$.
\end{defi}

The following lemma concerns parallel transport with respect to affine connections in general.  We will formulate it for K\"ahler manifolds for which the lemma will be applied in the current article.
We have

\begin{lemma}
Let $(X,g)$ be a K\"ahler manifold, $(V,h)$ be a Hermitian holomorphic vector bundle over $X$ and denote by $\nabla$ the Hermitian connection on $(V,h)$. Then, the following holds.
\end{lemma}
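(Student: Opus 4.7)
The lemma as displayed has its conclusion suppressed in the excerpt --- the statement ends with ``Then, the following holds.'' with no enumerated items visible --- so any proof plan must hinge on what one expects the lemma to say. Given Definition 6.2 immediately preceding and the clear goal of \S6 (detecting when the fibrewise subvariety of characteristic $2$-planes inside a Grassmann bundle is preserved under $\nabla$), the natural content to prove is a two-part package: (a) a holonomy characterization, namely that a subset $Z \subset V$ is invariant under parallel transport if and only if each fibre $Z_x = Z \cap V_x$ is preserved by the restricted holonomy group $\mathrm{Hol}_x(\nabla)$ at $x$; and (b) a holomorphic reduction, namely that for a subset $Z \subset V$ cut out fibrewise by holomorphic equations, invariance under $\nabla$ is equivalent to invariance under $\nabla^{(1,0)}$ alone, i.e.\ under parallel transport along $(1,0)$-tangent directions. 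The plan is to prove both.

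For (a) my approach is standard. Given any two points $x,y \in X$ and two smooth paths $\gamma,\gamma'$ from $x$ to $y$, the parallel transport maps $P_\gamma, P_{\gamma'} : V_x \to V_y$ differ by the holonomy element $P_{\gamma'}^{-1} \circ P_\gamma \in \mathrm{Hol}_x(\nabla)$. The ``only if'' direction follows directly from Definition 6.2 applied to loops. For ``if'', I would fix a reference point $x_0$ at which $Z_{x_0}$ is holonomy-invariant, and for every other point $x$ and vector $\eta \in Z_x$ produce a path from $x_0$ to $x$ realising $\eta$ as a parallel translate of some $\eta_0 \in Z_{x_0}$; then composition of parallel transports and holonomy invariance at $x_0$ together imply that translating $\eta$ along any smooth curve from $x$ keeps it inside $Z$.

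For (b), the decisive ingredient is that $\nabla$ is the Chern connection of $(V,h)$, so that $\nabla^{(0,1)}$ coincides with the $\bar\partial$-operator on holomorphic sections. In a local holomorphic frame $\{e_1,\ldots,e_r\}$ one has $\nabla_{\bar\xi} e_i = 0$ for every antiholomorphic vector $\bar\xi$, so any local holomorphic section $s$ of $V$ with $s(x) \in Z_x$ remains inside $Z$ on an open neighbourhood of $x$, provided $Z$ is a holomorphic subvariety of the total space; parallel transport in antiholomorphic directions is thus automatic. Invariance therefore reduces to the $(1,0)$-part, and writing $\nabla^{(1,0)} = \partial + \theta$ with $\theta = h^{-1}\partial h$ the Chern connection form turns the invariance requirement into a pointwise algebraic incidence condition between $\theta$ and the defining equations of $Z$.

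The chief obstacle is precisely that the lemma's conclusions are not visible in the excerpt; the plan above is calibrated to the most natural pair of statements the lemma is likely to contain, but if the intended conclusion is instead an explicit curvature identity (for example that the curvature operator $R^\nabla$ preserves the tangent cone of $Z$) or a rigidity assertion specific to holomorphic sub-Grassmannians, the same underlying machinery --- Chern connection, holonomy invariance, and holomorphic reduction to $(1,0)$-parallel transport --- still forms the backbone of the proof, with only the final algebraic step needing to be matched to whatever precise formulation the authors give.
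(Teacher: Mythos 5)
There is a genuine gap: your proposal does not prove the lemma, because you guessed its conclusion incorrectly. The enumerated items are present in the source (they sit in an \texttt{enumerate} environment immediately after the lemma environment, a formatting quirk of the paper), and they assert three concrete things: (1) if $E_1, E_2 \subset V$ are invariant under parallel transport then so is $E_1 \cap E_2$; (2) the subset ${\cal H} \subset S^2V$ of squares $\eta \odot \eta$ of non-zero vectors is invariant under parallel transport with respect to the induced connection on $(S^2V, S^2h)$; and (3) if $E \subset V$ is a vector subbundle such that $S^2E \subset S^2V$ is invariant under parallel transport, then $E$ itself is invariant under parallel transport. Your two-part package --- a holonomy characterization of parallel-invariant subsets and a reduction to $(1,0)$-parallel transport via the Chern connection --- addresses none of these. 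In particular the Chern-connection/$\bar\partial$ discussion in your part (b) plays no role anywhere in the lemma: the statement and its proof are purely about real parallel transport along smooth curves for an arbitrary Hermitian connection, and holomorphicity of $V$ is never used.

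The actual proof is short and elementary. Item (1) is immediate from Definition 6.2. Item (2) rests on the Leibniz rule for the induced connection: for a section $\widetilde\eta$ along a curve $\gamma$ one has $\nabla_{\dot\gamma}(\widetilde\eta \otimes \widetilde\eta) = 2\,\nabla_{\dot\gamma}\widetilde\eta \odot \widetilde\eta$, so $\widetilde\eta$ is parallel if and only if $\widetilde\eta \otimes \widetilde\eta$ is parallel; the forward implication shows ${\cal H}$ is carried into itself. Item (3) uses the reverse direction together with a linear-algebra observation: parallel transport of $E_x$ along $\gamma$ gives subspaces $H_t \subset V_{\gamma(t)}$ with $S^2H_t = S^2E_{\gamma(t)}$ by hypothesis, and $S^2H = S^2E'$ for two subspaces forces $H = E'$ (if $\lambda \perp H$ then $\lambda\odot\lambda \perp S^2H = S^2E'$, hence $\lambda \perp E'$). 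Neither the Leibniz computation of (2) nor the symmetric-square cancellation of (3) appears in your proposal, so even granting your (correct but unused) holonomy reformulation in part (a), the substance of the lemma remains unproved.
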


\begin{enumerate}
\item[(1)]
Suppose $E_1, E_2 \subset V$ are invariant under parallel transport, then $E_1 \cap E_2$ is invariant under parallel transport.
\item[(2)]
Denoting by ${\cal H} \subset S^2V$ the subset consisting of squares of non-zero tangent vectors, then ${\cal H}$ is invariant under parallel transport with respect to $\nabla$.
\item[(3)]
Suppose $E \subset V$ is a vector subbundle such that $S^2E \subset S^2V$ is invariant under parallel transport with respect to the Hermitian connection $\nabla$ on $(S^2V,S^2h)$.  Then $E \subset V$ is invariant under parallel transport on $(V,h)$.
\end{enumerate}

\vspace{1.2mm}
\noindent
\begin{proof}
(1) follows immediately from the definition of invariance under parallel transport.  For (2), given $x \in X$, a smooth curve $\gamma: (-a,a) \to X$, $\gamma(0) = x$, $\eta \in V_x$ and $\widetilde{\eta}$ a smooth section of $V$ over $\gamma$ satisfying $\widetilde\eta(x) = \eta$, we have
$\nabla_{\dot\gamma}(\widetilde\eta \otimes \widetilde\eta) = \nabla_{\dot\gamma}\widetilde\eta \otimes \widetilde\eta + \widetilde\eta \otimes \nabla_{\dot\gamma}\widetilde\eta = 2\nabla_{\dot\gamma}\widetilde\eta \odot \widetilde\eta$.
Hence, $\nabla_{\dot\gamma}\widetilde\eta = 0$ if and only $\nabla_{\dot\gamma}(\widetilde\eta \otimes \widetilde\eta) = 0$, i.e., ($\sharp$) $\widetilde\eta$ is invariant under parallel transport over $\gamma$ if and only if $\widetilde\eta \otimes \widetilde\eta$ is invariant under parallel transport over $\gamma$. The forward implication in $(\sharp)$ proves that ${\cal H}$ is invariant under parallel transport, giving (2).  For (3), if for each $\eta \in E_x$ we choose the $V$-valued smooth section $\widetilde\eta$ to be the parallel transport of $\eta$ over $\gamma$, for any $t \in (-a,a)$, the assignment $\eta \mapsto \widetilde\eta(\gamma(t))$ defines an injective linear map $\Phi_t: E_x \to V_{\gamma(t)}$.  Writing $H_t: = \Phi_t(E_x)$, the hypothesis under (3) implies by $(\sharp)$ that $S^2H_t = S^2E_{\gamma(t)}$, which implies that $H_t = E_{\gamma(t)}$.  ($\lambda \perp H_t \Rightarrow \lambda\odot\lambda\perp S^2H_t \Rightarrow \lambda\odot\lambda \perp S^2E_{\gamma(t)} \Rightarrow \lambda\perp E_{\gamma(t)}).$ Varying $x \in X$ and $\gamma$ this implies that $E$ is invariant under parallel transport, proving (3). 
\end{proof}

Regarding rigidity phenomena for compact complex submanifolds $S \subset X$ on quotient manifolds $X$ of type-IV domains (Lie balls), we have the following characterization result under the hypothesis that the holomorphic conformal structure on $X$ is non-degenerate when restricted to $S$, i.e., $S$ inherits a holomorphic conformal structure from $X$ by restriction.

\begin{theorem} [{\rm See \cite{s15}}]
Let $\, \Gamma \subset \text{\rm Aut}(D^{IV}_n), n \ge 3$, be a torsion-free discrete subgroup and write $X = D^{IV}_n/\Gamma$. Let $S \subset X$ be a compact complex submanifold of any dimension $d$, where $1\le d < n$, such that for any point $x \in S$, the restriction of the canonical holomorphic conformal structure on $D^{IV}_n$ is non-degenerate at $x$. Then, denoting by $g$ the canonical K\"ahler-Einstein metric on $X$, $(S,g|_S)$ is totally geodesic in $(X,g)$.
\end{theorem}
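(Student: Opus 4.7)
The plan is to transport the canonical holomorphic conformal structure from $X=D^{IV}_n/\Gamma$ down to $S$ via the non-degeneracy hypothesis, to combine the inherited conformal structure with the canonical K\"ahler-Einstein metric on $S$ guaranteed by Proposition 2.1 so as to force $S$ to be Hermitian locally symmetric, and then to appeal to Hermitian metric rigidity to upgrade the embedding to a totally geodesic one.

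First, I recall that $T(X)$ carries a canonical $\mathrm{SO}(n,\mathbb{C})$-reduction: there is a parallel holomorphic line subbundle $L\subset S^2T^*(X)$ whose fibre at each point is spanned by a non-degenerate symmetric bilinear form $\mathcal{Q}_x$, and the VMRT $\mathscr{C}_x(X)\subset\mathbb{P}T_x(X)$ is the projectivized null cone of $\mathcal{Q}_x$ (compare Lemma 6.1 specialized to type IV). The hypothesis that the conformal structure restricts non-degenerately to $S$ says exactly that $\mathcal{Q}_x|_{T_x(S)}$ is non-degenerate for every $x\in S$, so the restriction produces a nowhere-vanishing holomorphic section $\mathcal{Q}_S$ of $S^2T^*(S)\otimes(i^*L)^{-1}$, where $i:S\hookrightarrow X$ denotes the inclusion; equivalently, it yields an isomorphism $T(S)\cong T^*(S)\otimes i^*L$. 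Taking determinants gives $i^*L\cong K_S^{-2/d}$ (possibly after passage to a finite unbranched cover of $S$), where $d=\dim S$; and since Proposition 2.1 guarantees that $K_S$ is ample, $(i^*L)^{-1}$ is a positive line bundle on $S$.

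Next, equip $S$ with the canonical K\"ahler-Einstein metric $h$ of negative Ricci curvature given by Proposition 2.1. The conformal structure $\mathcal{Q}_S$ is then a holomorphic section of the Hermitian holomorphic vector bundle $E:=S^2T^*(S)\otimes(i^*L)^{-1}$. A twisted Bochner-Kodaira computation in the spirit of Theorem 4.3, exploiting the K\"ahler-Einstein condition together with the positivity of $(i^*L)^{-1}$, forces $\mathcal{Q}_S$ to be parallel with respect to the Chern connection, hence parallel with respect to the Levi-Civita connection of $h$. Consequently the restricted holonomy of $(S,h)$ is contained in the subgroup of $\mathrm{U}(d)$ preserving a non-degenerate symmetric bilinear form up to scalar. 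Applying the de Rham decomposition together with Berger's classification (Theorem 4.4) and Theorem 4.6 to the irreducible de Rham factors of the universal cover $\widetilde S$, and using that among irreducible Hermitian symmetric series of non-compact type only type IV (and the rank-one disk $\mathbb{B}^1$) admits an invariant holomorphic conformal structure, forces each irreducible factor of $\widetilde S$ to be uniformized by $D^{IV}_m$ for some $m$ or by $\mathbb{B}^1$.

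Finally, once $(S,h)$ is Hermitian locally symmetric of non-compact type, applying Theorem 4.7 to the $D^{IV}_m$-factors and the rank-one Hermitian metric rigidity supplement of Mok to any $\mathbb{B}^1$-factors shows that the holomorphic embedding $S\hookrightarrow X$ must be a totally geodesic isometric immersion up to a normalizing constant, proving the conclusion. The principal technical obstacle is the Bochner step: one must genuinely identify $(i^*L)^{-1}$ with a positive fractional power of $K_S$, which generally requires passing to a finite cover of $S$ to realise this root as a line bundle, and then verify that the curvature of $E=S^2T^*(S)\otimes(i^*L)^{-1}$ taken with respect to $h$ has the sign needed to kill covariant derivatives of holomorphic sections. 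The low-dimensional edge cases $d=1,2$ and the occurrence of $\mathbb{B}^1$-factors also require separate attention, since in those ranges the holonomy classification degenerates and the rigidity theorem needs its rank-one variant.
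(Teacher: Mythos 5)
Your strategy for $d\ge 3$ --- restrict the conformal structure to $S$, use the K\"ahler--Einstein metric of Proposition 2.1 and a Bochner identity to make it parallel, reduce the holonomy, invoke Berger, and finish with Hermitian metric rigidity --- is essentially the route the paper attributes to \cite{s15}: the parallelism-plus-holonomy step is the Kobayashi--Ochiai theorem \cite{s12} on K\"ahler--Einstein manifolds carrying G-structures modeled on irreducible compact Hermitian symmetric spaces, and the last step is Theorem 4.7. But your Bochner step contains a sign error. Since $\mathcal{Q}_x|_{T_x(S)}$ is non-degenerate, taking determinants of $T(S)\cong T^*(S)\otimes i^*L$ gives $(i^*L)^{\otimes d}\cong K_S^{\otimes 2}$, so $i^*L\cong K_S^{2/d}$ and $(i^*L)^{-1}\cong K_S^{-2/d}$ is \emph{negative}, not positive. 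This is precisely what makes the argument work: $E=S^2T^*(S)\otimes K_S^{-2/d}$ has mean curvature identically zero with respect to the K\"ahler--Einstein metric (it behaves like the weight-zero tensor bundles of Theorem 4.3), so the integrated Bochner identity yields $\nabla\mathcal{Q}_S=0$. If $(i^*L)^{-1}$ really were positive, the mean curvature of $E$ would be strictly positive and the inequality would run the wrong way, giving no conclusion.

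The genuine gap is in the cases $d=1,2$, which the theorem explicitly includes and which you defer to ``separate attention.'' For $d=2$ the model quadric $Q^2\cong\mathbb P^1\times\mathbb P^1$ is reducible, and a parallel non-degenerate conformal structure on a K\"ahler--Einstein surface splits $T(S)$ (after a double cover) into the two parallel null-line fields, so $S$ is uniformized by the \emph{bidisk} $\Delta\times\Delta$. This is not an irreducible bounded symmetric domain of rank $\ge 2$, so Theorem 4.7 does not apply, and there is no ``rank-one Hermitian metric rigidity'' forcing total geodesy of a map out of a bidisk quotient --- holomorphic maps from quotients of $\Delta$ are completely flexible. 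The paper is explicit that this case needs, beyond metric rigidity in the form of Proposition 6.1 (which, via the Gauss equation, only kills the mixed second fundamental form $\sigma(\xi,\eta)$ for $\xi,\eta$ in the two null directions), the genuinely local Lemma 6.4 identifying integral curves of a characteristic line field with a parallel characteristic complement as minimal disks; that is what kills $\sigma(\xi,\xi)$ and $\sigma(\eta,\eta)$, and it is absent from your argument. The same objection applies to any $\mathbb B^1$-factors arising in your de Rham decomposition for $d\ge 3$ (you cannot ``apply Theorem 4.7 to the factors'' of a reducible $S$), and for $d=1$ the whole parallelism/holonomy strategy is vacuous since every metric on a curve is trivially locally symmetric. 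These low-dimensional cases are the hardest part of the theorem and remain unproved in your proposal.
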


For the proof of Theorem 6.2 when $S$ is of dimension $\ge 3$ one made use of a result of \cite{s12}  on compact K\"ahler-Einstein manifolds admitting G-structures modeled on irreducible Hermitian symmetric spaces of the compact type, together with results on Hermitian metric rigidity of \cite{s13}, \cite{s14}. For the case in Theorem 6.2 where $S$ is of dimension 2 on top of Hermitian metric rigidity one made use of the following special result for holomorphic curves on quotients of type-IV domains.

\begin{lemma}[{\rm See [15, Lemma 1]}]
Let $n \ge 3$, $U \subset D^{IV}_n$ be an open subset, and $C \subset U$ be a connected smooth holomorphic curve such that $T_x(C)$ is spanned by a characteristic vector at any point $x \in C$. Suppose there exists on $C$ a parallel holomorphic
line subbundle $E \subset T(U)|_C$ spanned at each point $x \in C$ by a characteristic vector orthogonal to $T_x(C)$. Then, $C$ is a connected open subset of a minimal disk.
\end{lemma}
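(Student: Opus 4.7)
The plan is to show that $C$ is an open subset of a minimal disk directly, via a pointwise analysis of the parallelism condition together with the explicit form of the curvature tensor on $D^{IV}_n$.

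Fix a point $p \in C$ and move $p$ to the origin by an automorphism of $\Omega$. Let $\alpha, \beta \in T_0(\Omega) \cong \mathbb C^n$ be unit characteristic vectors spanning $T_p(C)$ and $E_p$ respectively; by hypothesis $\beta$ is $g$-orthogonal to $\alpha$. Writing $Q$ for the non-degenerate symmetric bilinear form on $\mathbb C^n$ defining the hyperquadric $Q^n \subset \mathbb P^{n+1}$ (whose null cone is the characteristic cone of $D^{IV}_n$ at the origin), and $\alpha^\dagger$ for the componentwise conjugate of $\alpha$ in the Harish-Chandra realisation, the Gauss-equation computation in the proof of Lemma 3.1 applied to $Q^n \hookrightarrow \mathbb P^{n+1}$, transported to $D^{IV}_n$ via $R^\Omega = -W$, yields
\[
R^\Omega(\alpha,\bar\alpha)\,\beta \;=\; -\,\beta \;+\; Q(\alpha,\beta)\,\alpha^\dagger.
\]
Since $E$ is parallel in $T(\Omega)|_C$, the operator $R^\Omega(\alpha,\bar\alpha)$ preserves $E_p = \mathbb C\beta$, so $Q(\alpha,\beta)\,\alpha^\dagger$ must lie in $\mathbb C\beta$. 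This gives a pointwise alternative at each $p \in C$: either (A) $Q(\alpha,\beta) \neq 0$ and $E_p = \mathbb C\,\alpha^\dagger$, or (B) $Q(\alpha,\beta) = 0$ and $T_p(C) \oplus E_p$ is a characteristic $2$-plane.

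In Case (A), at each point $p$ the line $E_p$ coincides with the conjugate line $\mathbb C\alpha(p)^\dagger$ in the Harish-Chandra coordinates centred at $p$; this assignment depends antiholomorphically on the holomorphically varying tangent of $C$. On the other hand, $E$ is a holomorphic line subbundle of the holomorphically trivial bundle $T(\Omega)|_C$, so its classifying map $C \to \mathbb{P}T(\Omega)|_C$ is holomorphic. A map simultaneously holomorphic and antiholomorphic must be constant, so the tangent direction of $C$ is locally constant in the Harish-Chandra coordinates centred at $p$. Hence $C$ coincides locally with the affine complex line through $p$ in the characteristic direction $\alpha(p)$, which in these coordinates is precisely the minimal disk through $p$ tangent to $\alpha(p)$; analytic continuation then identifies $C$ as an open subset of this minimal disk.

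Case (B), where $F := T(C) \oplus E$ is a distribution of characteristic $2$-planes along $C$, is the main obstacle. Note that for $n = 3$ the symmetric form $Q$ on $\mathbb C^3$ admits no isotropic $2$-planes, so Case (B) is vacuous and the lemma is immediate. For $n \ge 4$ I would invoke the Polysphere Theorem (see~\cite{s24}) to identify each characteristic $2$-plane in $T_p(\Omega)$ as the tangent at $p$ of a unique totally geodesic polydisk $\Delta \times \Delta \subset D^{IV}_n$, and then use the parallelism of $E$ together with a Frobenius-type integrability argument to show that $C$ is contained in a single such polydisk $S$. Inside $S \cong \Delta \times \Delta$, the characteristic directions in $T_x(\Omega)$ lying in $T_x(S)$ are exactly the tangents to the two factor disks of $S$; the characteristic-tangent hypothesis on $C$ together with connectedness then force $C$ to lie along one factor direction throughout, so $C$ is an open subset of a factor disk of $S$, which is a minimal disk of $\Omega$. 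The principal technical difficulty is the integrability and containment step, which I expect to follow from the parallel structure of $E$ combined with the rigidity of totally geodesic subvarieties of $D^{IV}_n$, but the details appear delicate.
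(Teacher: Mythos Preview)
The paper does not supply a proof of this lemma; it is quoted from reference~\cite{s15}, so there is no in-paper argument to compare against.  I can only assess your proposal on its own merits.

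Your curvature computation and the reduction to the dichotomy (A)/(B), via the fact that a parallel subbundle is preserved by the ambient curvature endomorphism $R(\alpha,\bar\alpha)$, are correct and constitute a natural opening.  Both case analyses, however, have genuine gaps.

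In Case~(A) the conjugate $\alpha^\dagger$ is defined using the Harish-Chandra chart \emph{centred at $p$}; as $p$ moves along $C$ this chart changes, and the identification $E_p=\mathbb C\,\alpha(p)^\dagger$ is not taking place in a single holomorphic trivialisation of $T(\Omega)|_C$.  Thus the assertion that ``the assignment depends antiholomorphically'' on the holomorphically varying tangent of $C$ is not justified as written.  A coordinate-free reformulation is available---$E_p$ is the null-space of bisectional curvature $R(\alpha_p,\bar\alpha_p)|_{\alpha_p^\perp}$---but this does not obviously vary antiholomorphically in $p$ either, so the holomorphic/antiholomorphic clash you want is not yet established.

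In Case~(B) there is a more serious conceptual error.  A characteristic $2$-plane in $T_p(D^{IV}_n)$ (an isotropic plane for $Q$) is \emph{not} the tangent space of a maximal polydisk: it is the tangent space of a totally geodesic complex $2$-ball $\mathbb B^2$ (these are the ``linear'' totally geodesic submanifolds of \S2.1).  The tangent space of a maximal polydisk through $p$ is spanned by $\alpha$ and $\alpha^\dagger$, for which $Q(\alpha,\alpha^\dagger)\neq 0$, and is therefore not characteristic.  Your Polysphere/Frobenius plan is thus aimed at the wrong model, and the closing step---``characteristic directions in $T_x(S)$ are exactly the tangents to the two factor disks''---fails outright inside a $\mathbb B^2$, where \emph{every} direction is characteristic.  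Even granting containment of $C$ in a fixed totally geodesic $\mathbb B^2$ (the step you flag as delicate, now with the correct target), a separate argument is still required to force $C$ to be a geodesic disk of that $\mathbb B^2$; the parallelism of $E$ does yield this (via adjointness of second fundamental forms of $T(C)$ and $T(C)^\perp$ in $T(\mathbb B^2)|_C$), but none of this is in your write-up.
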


In the current article, for the proof of Theorem 6.1 we will also make use of the existence result on K\"ahler-Einstein metrics given by Proposition 2.1, the local characterization result for a certain type of totally geodesic holomorphic curves as given by Lemma 6.4, together with the following result related to Hermitian metric rigidity.

\begin{prop}[{\rm See\cite{s13}}]
Let $\Omega$ be a not necessarily irreducible bounded symmetric domain and $\Gamma \subset \text{\rm Aut}(\Omega)$ be a cocompact torsion-free discrete subgroup, $X = \Omega/\Gamma$.  Let $g$ be any $\text{\rm Aut}(\Omega)$-invariant K\"ahler metric $\text{\rm (}$which is of nonpositive holomorphic bisectional curvature$\text{\rm )}$, and denote by $R$ the curvature tensor of $(X,g)$.  Let $h$ be any Hermitian metric on $X$ of nonpositive curvature in the sense of Griffiths and denote by $\Theta$ the curvature tensor of $(X,h)$.  Then for any pair of $(1,0)$-tangent vectors $(\alpha,\zeta)$ at any point $x \in X$ which is a zero of holomorphic bisectional curvature of $(X,g)$, i.e., $R_{\alpha\ol{\alpha}\zeta\ol{\zeta}} = 0$, we must have $\Theta_{\alpha\ol{\alpha}\zeta\ol{\zeta}} = 0$.
\end{prop}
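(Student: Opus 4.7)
The plan is to derive the pointwise implication from a global integral identity on the compact manifold $X$, using that $\det g$ and $\det h$ are Hermitian metrics on the same holomorphic line bundle $\det T(X)\cong K_X^{-1}$. Concretely, the quotient $\det h/\det g$ is a well-defined positive smooth function on $X$, and $\phi:=\log(\det h/\det g)$ satisfies $\sqrt{-1}\,\partial\ol\partial\phi=\text{\rm Ric}(g)-\text{\rm Ric}(h)$. Wedging with the closed form $\omega_g^{n-1}$, where $n=\dim X$ and $\omega_g$ is the K\"ahler form of $g$, and applying Stokes' theorem on the compact manifold $X$ yields the global identity
\[
\int_X \bigl(\text{\rm Ric}(g)-\text{\rm Ric}(h)\bigr)\wedge\omega_g^{n-1}=0.
\]

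To extract pointwise information, at each $x\in X$ I would work in a $g$-orthonormal basis $\{e_1,\ldots,e_n\}$ of $T_x(X)$ simultaneously diagonalizing $h$, with $h(e_i,\ol{e_i})=\lambda_i>0$. In this frame, the integrand above, divided by $\omega_g^n$, is a scalar linear combination of the quantities $R_{e_i\ol{e_i}e_j\ol{e_j}}$ (nonpositive by nonpositive holomorphic bisectional curvature of the $\text{\rm Aut}(\Omega)$-invariant K\"ahler metric $g$) and $\Theta_{e_i\ol{e_i}e_k\ol{e_k}}$ (nonpositive by Griffiths nonpositivity of $h$), with coefficients $1$ and $\lambda_k^{-1}$ respectively. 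The crucial step is to exhibit a pointwise inequality of definite sign. Once established, the global identity forces it to be an equality at every $x\in X$, which in turn forces the individual nonpositive summands to satisfy term-by-term identities; polarization in $\alpha$ and $\zeta$ of the resulting Hermitian forms then yields the implication $R_{\alpha\ol\alpha\zeta\ol\zeta}=0\Rightarrow\Theta_{\alpha\ol\alpha\zeta\ol\zeta}=0$.

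The main obstacle will be constructing the correct pointwise combination of $R$ and $\Theta$ of definite sign, since the two Hermitian structures on $T(X)$ interact through the a priori uncontrolled eigenvalues $\lambda_k$. To surmount it, I would exploit the parallel and locally symmetric rigidity of $R$: the $\text{\rm Aut}(\Omega)$-invariance of $g$ forces $R$ at each $x$ to be determined by the polydisk decomposition of $T_x(\Omega)$, which tightly constrains its null locus. Combining this algebraic rigidity with the Griffiths nonpositivity of $\Theta$ and an AM--GM or Cauchy--Schwarz estimate on the $\lambda_i$, applied upon averaging suitably over $g$- and $h$-orthonormal frames, should reduce the pointwise inequality to a classical Hermitian matrix comparison between a form and its diagonal, in the spirit of Mok's metric rigidity arguments in~\cite{s13}.
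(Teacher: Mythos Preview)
The paper does not supply its own proof here; it simply cites \cite{s13} and remarks that in the locally irreducible case the statement (first for characteristic $\alpha$) is obtained ``by an integral formula on first Chern forms'', from which the full Hermitian metric rigidity is then deduced.

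Your sketch carries the right slogan but the wrong implementation, and there is a genuine gap. The identity $\int_X (\text{Ric}(g)-c_1(T(X),h))\wedge\omega_g^{n-1}=0$ is just a cohomological triviality, and the pointwise integrand is a \emph{difference} of two nonpositive scalars, namely $\text{scal}_g(x)$ and $\sum_{i,k}\lambda_k^{-1}\Theta_{e_i\ol{e_i}e_k\ol{e_k}}$. There is no sign: the eigenvalues $\lambda_k$ of $h$ relative to $g$ are unconstrained, so no AM--GM or Cauchy--Schwarz manipulation will manufacture one. More fundamentally, even if you somehow forced pointwise equality, you would obtain a single scalar identity at each $x$; there is nothing to ``polarize in $\alpha$ and $\zeta$'', and a scalar equation cannot yield the directional implication $R_{\alpha\ol\alpha\zeta\ol\zeta}=0\Rightarrow\Theta_{\alpha\ol\alpha\zeta\ol\zeta}=0$ for specific pairs $(\alpha,\zeta)$.

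The argument of \cite{s13} operates on a different space. One passes to $\mathbb{P}T(X)$ and uses the first Chern forms $\hat\omega_g,\hat\omega_h$ of the tautological line bundle $L^{-1}$ for the metrics induced by $g$ and $h$. At a point $[\alpha]\in\mathbb{P}T_x(X)$ the horizontal part of $\hat\omega_g$ is the Hermitian form $\zeta\mapsto -R_{\alpha\ol\alpha\zeta\ol\zeta}/\|\alpha\|_g^2$, so a \emph{single} direction $\alpha$ is built into the geometry. In the irreducible rank~$\ge 2$ case one integrates $(\hat\omega_h-\hat\omega_g)\wedge\hat\omega_g^{\,d-1}$ over the characteristic bundle $\mathscr{S}\subset\mathbb{P}T(X)$, $d=\dim\mathscr{S}$: there $\hat\omega_g\ge 0$ and, for characteristic $[\alpha]$, its kernel along $\mathscr{S}$ is precisely the horizontal lift of $\{\zeta:R_{\alpha\ol\alpha\zeta\ol\zeta}=0\}$; a dimension count tied to the symmetric structure gives the integrand a sign, and vanishing of the integral then forces $\Theta_{\alpha\ol\alpha\zeta\ol\zeta}=0$ on that null space. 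The reducible case uses the holomorphic splitting $T(X)=\bigoplus_i E_i$ and an analogous first-Chern-form argument factor by factor. The essential missing idea in your plan is this lift to $\mathbb{P}T(X)$ (equivalently, integrating over the characteristic bundle), which is what isolates individual directions rather than averaging them away.
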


Proposition 6.1 is a consequence of Hermitian Metric Rigidity Theorem 
(\!\!\cite{s13}, \cite{s14}) in the non-compact case, including both the locally irreducible and the locally reducible case.  In the locally irreducible case, Proposition 6.1 for the case of characteristic vectors $\alpha$ was established first by an integral formula on first Chern forms and the uniqueness theorem for Hermitian metrics of nonpositive curvature in the sense of Griffiths was derived as a consequence.

For the bounded symmetric domain $D^{IV}_n$, $n \ge 3$, we denote by $D^{IV}_n \Subset \mathbb C^n \subset Q^n$ the standard embeddings incorporating the Harish-Chandra realization and the Borel embedding.  At $0 \in D^{IV}_n \subset Q^n$, the tangent space $T_0(D^{IV}_n) = T_0(Q^n)$ is endowed with a Hermitian inner product given by $g_\Omega$ and equivalently by $g_c$ as the dual canonical metrics agree at $0$. We have the following standard way for the expression of the canonical holomorphic conformal structure.

\begin{lemma}
Let $m \ge 2$ be an integer and $q \in \Gamma(Q^{2m},S^2T^*(Q^{2m})\otimes {\cal O}(2)))$ be an $\text{\rm Aut}(Q_{2m})$-invariant holomorphic section defining the canonical holomorphic conformal structure on $Q^{2m}$.  Let $e_1 \in T_0(D^{IV}_{2m})$ be a characteristic vector of unit length.  Then, there exists an orthonormal basis $\{e_1,\cdots,e_{2m}\}$ in $T_0(D^{IV}_{2m})$ consisting of characteristic vectors including $e_1$ such that, writing the corresponding dual basis as $\{e_1^*,\cdots,e_{2m}^*\}$ and identifying the fiber of ${\cal O}(2)$ at $0 \in Z$ with $\mathbb C$ by some linear isomorphism, we may write $q(0) = e_1^*\odot e_2^* + e_3^*\odot e_4^* + \cdots + e_{2m-1}^*\odot e_{2m}^*$, where $\odot$ denotes the symmetric tensor product.
\end{lemma}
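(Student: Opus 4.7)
The plan is to work in the Harish-Chandra realization and exploit the compatibility of the K\"ahler-Einstein metric $g_\Omega$ with the $\text{Aut}(Q^{2m})$-invariant holomorphic conformal structure $q$. Embed $D^{IV}_{2m} \Subset \mathbb{C}^{2m} \subset Q^{2m}$ by the standard inclusions, so that $T_0(D^{IV}_{2m}) \cong \mathbb{C}^{2m}$ carries the standard Hermitian inner product $\langle z, w \rangle = \sum_k z_k \bar{w}_k$. After fixing a linear isomorphism $\mathcal{O}(2)|_0 \cong \mathbb{C}$, the value $q(0)$ becomes a non-degenerate $\mathbb{C}$-bilinear symmetric form on $\mathbb{C}^{2m}$; by $K$-invariance, up to a scalar absorbed into the trivialization, we may write $q(0)(z, w) = \sum_k z_k w_k$, so that by duality with Lemma 6.1 a vector $\chi \in T_0 \setminus \{0\}$ is characteristic exactly when $q(0)(\chi, \chi) = 0$.

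Next I would normalize the given $e_1$. Writing $e_1 = u + iv$ with $u, v \in \mathbb{R}^{2m}$, the conditions $\|e_1\|^2 = 1$ and $q(0)(e_1, e_1) = |u|^2 - |v|^2 + 2i\, u \cdot v = 0$ translate immediately to $|u| = |v| = 1/\sqrt{2}$ and $u \perp v$ in the real Euclidean sense. The real orthogonal subgroup $SO(2m, \mathbb{R}) \subset K$ acts on $\mathbb{C}^{2m}$ by its natural complexified action, and its elements, being simultaneously unitary and real, preserve both $\langle \cdot, \cdot \rangle$ and $q(0)$ strictly. Since $SO(2m, \mathbb{R})$ is transitive on ordered orthonormal pairs of vectors of a common length in $\mathbb{R}^{2m}$, we may replace $e_1$ by a suitable translate and assume $e_1 = \tfrac{1}{\sqrt 2}(\eta_1 + i \eta_2)$, where $\{\eta_1, \ldots, \eta_{2m}\}$ is the standard real basis.

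With $e_1$ thus normalized, I would then simply exhibit
\[
e_{2k-1} := \tfrac{1}{\sqrt 2}(\eta_{2k-1} + i\eta_{2k}), \qquad e_{2k} := \tfrac{1}{\sqrt 2}(\eta_{2k-1} - i\eta_{2k}), \qquad k = 1, \ldots, m.
\]
A routine check verifies that $\{e_1, \ldots, e_{2m}\}$ is Hermitian-orthonormal, that each $e_i$ is characteristic (since $q(0)(e_i, e_i) = \tfrac12(1 + (\pm i)^2) = 0$), and that the only non-vanishing pairings are $q(0)(e_{2k-1}, e_{2k}) = 1$. Reading off $q(0)$ from the dual basis, and absorbing the residual scalar into the identification $\mathcal{O}(2)|_0 \cong \mathbb{C}$, yields the asserted hyperbolic expression $q(0) = \sum_{k=1}^{m} e_{2k-1}^* \odot e_{2k}^*$.

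The main obstacle, such as it is, is conceptual rather than computational: one must handle simultaneously the sesquilinear Hermitian metric and the $\mathbb{C}$-bilinear form $q$, which transform differently under $K$ and live on different bundles, with $q$ well-defined only up to a scalar before $\mathcal{O}(2)|_0$ is trivialized. The bridge is the real form $\mathbb{R}^{2m} \subset \mathbb{C}^{2m}$ associated to the Cartan involution of $D^{IV}_{2m}$: its stabilizer $SO(2m, \mathbb{R})$ preserves both structures at once, and that is exactly what makes the normalization of $e_1$ into the split form possible. The evenness of the ambient dimension $2m$ is equally essential, as the hyperbolic presentation requires a complete pairing of coordinates into blocks $e_{2k-1}^* \odot e_{2k}^*$.
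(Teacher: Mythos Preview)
The paper states Lemma 6.5 without proof, presenting it simply as ``the standard way for the expression of the canonical holomorphic conformal structure'' and moving directly to the proof of Theorem 6.1; the only related computation appears later in Remark (a), where for the special case $m=2$ the authors verify the formula via the identification $D^{IV}_4 \cong D^{I}_{2,2}$ and the determinant form $q = ad-bc$.  Your argument therefore supplies what the paper omits, and it is correct: the identification of $q(0)$ with $\sum z_k w_k$ in Harish-Chandra coordinates, the decomposition $e_1 = u+iv$ with $u,v$ real orthonormal of length $1/\sqrt 2$, the normalization via $SO(2m,\mathbb R) \subset K$, and the explicit hyperbolic basis all check out.

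One small point: your appeal to ``duality with Lemma 6.1'' for the equivalence \emph{characteristic $\Leftrightarrow$ $q$-null} is a little oblique.  Lemma 6.1 characterizes characteristic vectors via holomorphic sectional curvature, not via $q$.  The fact you actually need---that the VMRT of $Q^{2m}$ at a point is precisely the projectivized null cone of the conformal structure---is most directly seen from the description of minimal rational curves on the hyperquadric as projective lines contained in $Q^{2m}\subset\mathbb P^{2m+1}$, which is elementary and independent of Lemma 6.1.  This does not affect the validity of your proof, but the citation could be sharpened.
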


We are now ready to give a proof of Theorem 6.1 characterizing compact complex splitting surfaces of quotient manifolds of Lie balls of dimension 3 or 4.

\vspace{1.2mm}
\noindent
\begin{proof}[Proof of Theorem 6.1.]
Consider first Case (1) of Theorem 6.1. Embedding $D^{IV}_3$ into $D^{IV}_4$ as a totally geodesic complex submanifold in the standard way accompanied by a Lie group monomorphism $\Psi: \text{\rm Aut}(D^{IV}_3) \to \text{\rm Aut}(D^{IV}_4)$, then $X= D^{IV}_3/\Gamma$ embeds as a totally geodesic complex submanifold of $X'= D^{IV}_4/\Gamma'$, $\Gamma' = \Psi(\Gamma)$. Case (1) for dimension $n = 3$ can then be deduced from the result for $n = 4$ as follows. Let $L$ be the orthogonal complement of $T(X)$ in $T(X')|_X$. Thus, $L$ is also a parallel line bundle on $X$. Let $S\subset X$ be a compact splitting complex surface and $\mathcal N\subset T(X)|_S$ be a holomorphic complementary bundle of $T(S)$ in $T(X)|_S$. Then $\mathcal N\oplus L|_S$ is a rank-2 holomorphic vector bundle on $S$ complementary to $T(S)$ in $T(X')|_S$. But $L$ is not generated by characteristic vectors on $X'$ and therefore the result in Case (2) for $n=4$ implies that $S$ is totally geodesic in $X'$ and hence totally geodesic in $X$. We remark that the only totally geodesic 2-dimensional complex submanifolds of the 3-dimensional Lie ball $D^{IV}_3$ are the maximal bidisks (this is easily seen if we identify $D^{IV}_4$ with $D^{I}_{2,2}$ and identify $D^{IV}_3$ as $D^{III}_{2} \subset D^{I}_{2,2}$ consisting of symmetric matrices) and the result for Case (1) now follows.

From now on we consider only the case $n = 4$.
Write $S^2T(X) = A \oplus B$ for the locally homogeneous holomorphic direct sum decomposition as given in Theorem 4.1, where $A$ is a holomorphic line bundle and $B$ is of rank 9, and denote by $\nu: S^2T(X) \to A$ the natural projection induced by the direct sum decomposition $S^2T(X) = A \oplus B$.
Denote by $\rho: D^{IV}_4 \to X = D^{IV}_4/\Gamma$ the universal covering map.  Let $x \in X$ be an arbitrary point, and let $\widetilde{x} \in D^{IV}_4$ be chosen such that $\rho(\widetilde x) = x$. Identify $T_x(X)$ with $T_0(D^{IV}_4)$ via a lifting of $T_x(X)$ to $T_{\tilde x}(D^{IV}_4)$ and an automorphism of $D^{IV}_4$. The locally homogeneous holomorphic line bundle $A \subset S^2T(X)$ is generated at each point $x \in X$ by an element $\frak a_x \in A_x$ identified with $e_1 \odot e_2 + e_3 \odot e_4 \in S^2T_0(D^{IV}_4)$.

Suppose $S^2T(S) \subset B|_S$. Then, by Lemma 6.2 we have $\mathbb PT(S) \subset {\mathscr C} (X)|_S$, and for each point $x \in S$, $T_x(S) \subset T_x(X)$ is a characteristic 2-plane.  By Theorem 2.1 it follows that $(S,g|_S) \hookrightarrow (X,g)$ is totally geodesic.  For the proof of Case (2) of Theorem 6.1, from now on we assume that $S^2T(S) \not\subset B|_S$. Denote by $\pi: T(X)|_S \to T(S)$ the holomorphic linear projection such that Ker$(\pi) = {\cal N}$ and $\pi|_{T(S)} = \text{\rm id}_{T(S)}$, and by $\pi_\sigma: S^2T(X)|_S \to S^2T(S)$ the holomorphic linear projection naturally induced by $\pi$.
Writing $\Phi := \pi_\sigma\circ\nu|_{S^2T(S)}$ we have a holomorphic bundle homomorphism given by
$$
\Phi: S^2T(S) \subset S^2T(X)|_S \overset{\nu}{\longrightarrow} A|_S \subset S^2T(X)|_S \overset{\pi_\sigma}{\longrightarrow} S^2T(S).
$$
Regard now $\Phi$ as a holomorphic bundle endomorphism of the rank-3 holomorphic vector bundle $S^2T(S)$. By Proposition 2.1, $S$ is equipped with a K\"ahler-Einstein metric (of negative Ricci curvature).  Hence, $T(S)$ is a holomorphic vector bundle over $S$ equipped with a Hermitian-Einstein metric, which also induces a Hermitian-Einstein metric on $S^2T(S)$. It follows that either (a) $S^2T(S)$ is a holomorphic and isometric direct sum of at least two Hermitian-Einstein holomorphic vector subbundles of rank $\ge 1$ of the same slope, or (b) the holomorphic vector bundle $S^2T(S)$ over $S$ is stable (see \cite{s21}). In Case (a) we claim that

\begin{enumerate}
\item[$(\dagger)$]
$S \subset X$ must be uniformized by the bi-disk $\Delta^2$, i.e., $S \cong \Delta^2/\Xi$ for some torsion-free discrete subgroup $\Xi \subset \text{\rm Aut}(\Delta^2)$, and moreover $(S,g|_S) \hookrightarrow (X,g)$ is totally geodesic.
\end{enumerate}

To establish $(\dagger)$ let $h$ be the unique K\"ahler-Einstein metric on $S$ of Ricci curvature $-2$. We will consider the holomorphic vector bundle $\tau: S^2T(S) \to S$ and its projectivization $\tau': \mathbb P(S^2T(S)) \to S$, and also holomorphic fiber subbundles (with not necessarily closed fibers) on them.  For any point $x \in S$, let ${\cal H}_x \subset S^2T_x(S)$ be the subset consisting of all $\xi \otimes \xi$ such that $\xi \in T_x(S), \xi \neq 0$. Varying over $x \in S$ we obtain a holomorphic fiber subbundle $\gamma : {\cal H} \to S$ (where $\gamma = \tau|_{{\cal H}}$) of $\tau: S^2T(S) \to S$ of fiber dimension 2.  By Lemma 6.3, ${\cal H} \subset S^2T(S)$ is invariant under parallel transport with respect to the Hermitian connection on $(S^2T(S),S^2h)$ induced from $(S,h)$.  Under the hypothesis for Case (a), the rank-3 Hermitian holomorphic vector bundle $(S^2T(S),S^2h)$ splits into a holomorphic and isometric direct sum of at least two Hermitian-Einstein holomorphic vector bundles of rank $\ge 1$ (and of the same slope).  In particular, there must be a rank-2 holomorphic vector subbundle $E \subsetneq S^2T(S)$ which is parallel with respect to $(S^2T(S),S^2h)$. For each point $x \in S$, $\mathbb P {\cal H}_x \subset \mathbb PS^2T_x(S)$ is a holomorphic curve of degree 2, where $\mathbb P{\cal H}_x$ denotes the image of ${\cal H}_x$ under the natural projection map $\beta_x: S^2T_x(S)-\{0\} \to \mathbb P(S^2T_x(S))$.  Varying $x$ over $S$ we will also write $\gamma': \mathbb P{\cal H} \to S$ for the corresponding holomorphic fiber subbundle of $\tau': \mathbb PS^2T(S) \to S$, where $\gamma' = \tau'|_{\mathbb P{\cal H}}$. For any point $x \in S$, $\mathbb P{\cal H}_x \cap \mathbb PE_x$ is nonempty, consisting either of two isolated points, or of a single unreduced point.  There exists therefore a nonempty connected open subset $U \subset S$ such that either (i) $\mathbb P{\cal H}_x \cap \mathbb PE_x$ consists of two isolated points for every $x\in U$ or (ii) $\mathbb P{\cal H}_x \cap \mathbb PE_x$ consists of a single unreduced point for every $x\in U$.  In either case, there exists a holomorphic section $\sigma: U \to \mathbb P{\cal H}\cap \mathbb PE$ which defines a holomorphic line subbundle $\Lambda \subset S^2T(S)|_U$, which is a parallel subbundle with respect to $(S^2T(S),S^2h)$.  Since every element of $\Lambda$ is a square $\xi\odot\xi$ it follows that there exists a
holomorphic line bundle $L \subset T(U)$ such that $\Lambda = S^2L \subset S^2T(S)$.  By Lemma 6.3, $L \subset T(U)$ is invariant under parallel transport.

Denoting by $L^{\perp} \subset T(U)$ the orthogonal complement of $L$ in $T(U)$, then $L^{\perp} \subset T(U)$ is also a parallel subbundle with respect to $(T(S),h)$, and $T(U) = L \oplus L^{\perp}$ is a holomorphic and isometric direct sum decomposition.  Hence, denoting by $R^h$ the curvature tensor of $(S,h)$, we have $R^h_{\xi\ol{\xi}\eta\ol{\eta}} = 0$ whenever $\xi \in L_x$ and $\eta \in L^{\perp}_x$ for some $x \in U$. Since $S$ is of complex dimension 2 and $(S,h)$ is the canonical K\"ahler-Einstein metric of Ricci curvature $-2$, $(U,h|_U)$ is locally isometric to the product of two Poincar\'e disks of Gaussian curvature $-2$.  In particular, $(U,h|_U)$ is locally symmetric, so that $\nabla R^h \equiv 0$, where $\nabla$ stands for the Hermitian connection of $(S,h)$.  Since $h$ is real-analytic, the condition $\nabla R^h \equiv 0$ holds everywhere on $S$, hence $(S,h)$ is Hermitian locally symmetric and of negative Ricci curvature, implying that $S$ is uniformized by the bidisk $\Delta^2$, i.e., $S \cong \Delta^2/\Xi$ for some torsion-free discrete subgroup $\Xi \subset \text{\rm Aut}(\Delta^2)$. Thus, $S$ is the image of a holomorphic embedding $f: \Delta^2/\Xi \ \hookrightarrow X =  D^{IV}_4/\Gamma$, and $f^*g$ is a K\"ahler metric of nonpositive holomorphic bisectional curvature on $\Delta^2/\Xi$.

For the proof of the claim $(\dagger)$ for Case (a) it remains to show that $(S,g|_S) \hookrightarrow (X,g)$ is totally geodesic.
Write $s$ for the K\"ahler metric $g|_S$.  It follows from Proposition 6.1 (related to Hermitian metric rigidity) that also $R^s_{\xi\ol{\xi}\eta\ol{\eta}} = 0$ whenever $\xi \in L_x$ and $\eta \in L^{\perp}_x$ for some $x \in U$.  Together with the Gauss equation this gives $0 = R^s_{\xi\ol{\xi}\eta\ol{\eta}} = R^g_{\xi\ol{\xi}\eta\ol{\eta}} - \|\sigma(\xi,\eta)\|^2 \le 0$, where $\sigma$ stands for the second fundamental form of $(S,g|_S) \hookrightarrow (X,g)$, so that also $R^g_{\xi\ol{\xi}\eta\ol{\eta}} = 0$, and $\sigma(\xi,\eta) = 0$. By Lemma 6.4, a local holomorphic integral curve $C$ of $L$ or of $L^{\perp}$ is necessarily totally geodesic, so that also $\sigma(\xi,\xi) = \sigma(\eta,\eta) = 0$, which together with $\sigma(\xi,\eta) = 0$ implies that $\sigma(x) = 0$ for every $x \in U$ and hence for every $x \in S$ by the Identity Theorem for real-analytic functions.  It follows that $\sigma \equiv 0$ on $S$, i.e., $(S,g|_S) \hookrightarrow (X,g)$ is totally geodesic, as claimed.

It remains to consider the situation where $S^2T(S) \not\subset B$, i.e., $\nu|_{S^2T(S)} \not\equiv 0$, and where Case (b) holds, i.e., $S^2T(S)$ is a stable holomorphic vector bundle, in which case $S^2T(S)$ must be simple as a holomorphic vector bundle over $S$, i.e., $\Gamma\big(S,\text{\rm End}(S^2T(S))\big) = \mathbb C\cdot \text{\rm id}_{S^2T(S)}$.  Since $\dim\big(\Phi(S^2T_x(S))\big) \le \text{\rm rank}(A) = 1 < 3 = \text{\rm rank}(S^2T(S))$ for any $x \in X$, it follows from the simplicity of $S^2T(S)$ that $\Phi \equiv 0$.  From $\nu|_{S^2T(S)} \not\equiv 0$ and rank$(A) = 1$ it follows that $\nu(S^2T_x(S)) = A_x$ for a general point $x \in S$.
From $\Phi := \pi_\sigma\circ\nu|_{S^2T(S)} = 0$ it follows that $\pi_\sigma(A_x) = 0$ for a general point $x \in S$,
and hence $\pi_\sigma(A_x) = 0$ for every point $x \in S$.  From now on $x \in S$ denotes an arbitrary point.
Since $\dim(\mathbb P{\cal N}_x) = 1$, $\dim({\mathscr C}_x(X)) = 2$ and $\mathbb PT_x(X)\cong \mathbb P^3$, $\mathbb P{\cal N}_x \subset \mathbb PT_x(X)$ and ${\mathscr C}_x(X)\subset \mathbb PT_x(X)$ must have nonempty intersection.  Under the aforementioned identification $T_x(X) \cong T_0(D^{IV}_4)$ without loss of generality we may assume $[e_1] \in \mathbb PN_x \cap {\mathscr C}_x(X)$. Then, we have
$$
0 = \pi_\sigma(e_1 \odot e_2 + e_3 \odot e_4) = \pi(e_1)\odot\pi(e_2) + \pi(e_3)\odot\pi(e_4) = \pi(e_3)\odot\pi(e_4) \, .
$$
It follows that either $\pi(e_3) = 0$ or $\pi(e_4) = 0$. Now $q(e_1,e_1) = q(e_1,e_3) = q(e_1,e_4) = q(e_3,e_3) = q(e_4,e_4) = 0$, and it follows that for $k = 3$ or $4$, and for $a, b \in \mathbb C$, we have $q(ae_1+be_k,ae_1+be_k) = a^2q(e_1,e_k) + 2abq(e_1,e_k) + b^2q(e_k,e_k) = 0$, implying that both $\mathbb Ce_1 + \mathbb Ce_3$ and $\mathbb Ce_1 + \mathbb Ce_4$ are characteristic 2-planes.
In other words, when $S^2T(S) \not\subset B$ and $S^2T(S)$ is a stable vector bundle over $S$, we have ${\cal N}_x \subset {\mathscr C}_x(X)$ for every point $x \in S$.  The proof of Theorem 6.1 is complete. 
\end{proof}

\vspace{1.2mm}
\noindent
{\bf Remarks.} 
\begin{enumerate}
\item[(a)]
For the last argument there is the following alternative verification. Identifying $D^{IV}_4$ with $D^{I}_{2,2}$, the quadratic form $q$ is given by $q\Big(\Big[
\smallmatrix
a & b\\
c & d
\endsmallmatrix\Big]\ ,
\Big[
\smallmatrix
a & b\\
c & d
\endsmallmatrix\Big]\Big)
= ad - bc$.
Identifying $e_1$ with $E_{11}$, $e_2$ with $E_{22}$, $e_3$ with $\sqrt{-1}E_{12}$ and $e_4$ with $\sqrt{-1}E_{21}$, where $E_{ij}$ is the matrix with the $(i,j)$-th entry being equal to 1 and all other entries being equal to 0, we have $q = e_1^* \odot e_2^* + e_3^*\odot e_4^*$.  Now, $\mathbb Ce_1 \oplus \mathbb Ce_3$ corresponds to $\mathbb CE_{11} \oplus \mathbb CE_{12}$, which consists of matrices of rank $\le 1$, so that $\mathbb Ce_1 \oplus \mathbb Ce_3 \subset T_0(D^{IV}_n)$ is a characteristic 2-plane.  The same applies to $\mathbb Ce_1 \oplus \mathbb Ce_4$, which corresponds to $\mathbb CE_{11} \oplus \mathbb CE_{21}$.

\item[(b)]
To prove $(\dagger)$, after showing that $S \subset X$ is the image of a holomorphic embedding of $\Delta^2/\Xi$, it follows readily that for $x \in S$, $T_x(S)$ agrees with $T_0(P)$ of a maximal bidisk $P \subset D^{IV}_4$, and hence the restriction of the canonical holomorphic structure of $X$ to $S$ is non-degenerate, from which $(\dagger)$ follows from Theorem 6.2.  We note however that beyond Hermitian metric rigidity the key additional argument in the proof of Theorem 6.2 for the case of surfaces $S$ was in fact Lemma 6.4, which is local in nature.
\end{enumerate}

\textbf{Acknowledgements.}
The first author was partially supported by the GRF grant 17303814 of the Research Grants Council of Hong Kong, China. The second author was partially supported by National Natural Science Foundation of China, grant No. 11501205 and Science and Technology Commission of Shanghai Municipality (STCSM), grant No. 13dz2260400.


\end{document}